\renewcommand\Re{\mathop{{\rm Re}}}
\def\colvec[#1,#2]{\begin{bmatrix} #1 \\ #2 \end{bmatrix}}
\def\rowvec[#1,#2]{\begin{bmatrix} #1 & #2 \end{bmatrix}}
\def\ip<#1,#2>{\left\langle #1,#2 \right\rangle}
\newcommand{\trace}{\mathop{\mathrm{trace}}}
\newtheorem{thm}{Theorem}[section]
\newtheorem*{thm*}{Theorem}
\newtheorem{cor}[thm]{Corollary}
\newtheorem*{cor*}{Corollary}
\newtheorem{lem}[thm]{Lemma}
\newtheorem{prop}[thm]{Proposition}
\newtheorem*{con*}{Conjecture}
\newtheorem*{prob*}{Problem}
\theoremstyle{definition}
\newtheorem{defn}[thm]{Definition}
\newtheorem{rem}[thm]{Remark}
\newtheorem{ex}[thm]{Example}
\numberwithin{equation}{section}
\title[Hasimoto frames and the Gibbs measure of periodic NLSE]{Hasimoto frames and the Gibbs measure of periodic nonlinear Schr\"odinger equation}
\author{Gordon Blower, Azadeh Khaleghi, Moe Kuchemann-Scales}
\address{Department of Mathematics and Statistics, Lancaster University, United Kingdom.}
\date{13th May 2022}
\begin{document}

\maketitle
{\bf Abstract} The paper interprets the cubic nonlinear Schr\"odinger equation as a Hamiltonian system with infinite dimensional phase space. There is a Gibbs measure which is invariant under the flow associated with the canonical equations of motion. The logarithmic Sobolev and concentration of measure inequalities hold for the Gibbs measures, and here are extended to the $k$-point correlation function and distributions of related empirical measures. By Hasimoto's theorem, NLSE gives a Lax pair of coupled ODE for which the solutions give a system of moving frames. The paper studies the evolution of the measure induced on the moving frames by the Gibbs measure. \par
\vskip.1in

\noindent AMS Classification: 35Q82 Primary; 37L55, 35Q55 Secondary. \par
\vskip.1in
\noindent Keywords: concentration of measure; statistical mechanics; empirical processes \par
\vskip.1in

\section{Introduction}

Consider the Hamiltonian
\begin{equation}\label{Hamiltonian}H_3={\frac{1}{2}}\int_{\mathbb T} \Bigl( \Bigl( {\frac{\partial P}{\partial x}}\Bigr)^2+\Bigl( {\frac{\partial Q}{\partial x}}\Bigr)^2\Bigr)dx+ {\frac{\beta}{2\gamma}}\int_{\mathbb T} \bigl( P^2+Q^2\bigr)^\gamma dx,\end{equation}
on ${\mathbb T}={\mathbb R}/2\pi {\mathbb Z}$ which gives the canonical equations of motion
\begin{equation}\label{canonical}\begin{bmatrix} 0&1\\ -1&0\end{bmatrix} {\frac{\partial}{\partial t}} \begin{bmatrix} Q\\ P\end{bmatrix} =-{\frac{\partial^2}{\partial x^2}}\begin{bmatrix} Q\\ P\end{bmatrix} +\beta (P^2+Q^2)^{\gamma -1}
\begin{bmatrix} Q\\ P\end{bmatrix}\end{equation} 
so $u=P+iQ$ satisfies the nonlinear Schr\"odinger equation
\begin{equation}\label{NLS}i{\frac{\partial u}{\partial t}} =-{\frac{\partial^2u}{\partial x^2}} +\beta \vert u\vert^{2(\gamma -1)} u.\end{equation}
When $\gamma=2$, we have the cubic nonlinear Schr\"odinger equation. The spatial variable is $x\in {\mathbb T}$, and the functions are periodic, so that the system applies to fields parametrized by a circle.\par
\indent Throughout the paper, we write $(M,d,\mu )$ for a complete and separable metric space with a Radon (inner regular) probability measure $\mu$ on the $\sigma$-algebra generated by the Borel subsets.\par
\indent The squared $L^2$ norm $H_1=\int (P^2+Q^2)$ is formally invariant under the canonical equations of motion, so we can consider possible invariant measures on 
\begin{equation}\label{ball}B_K=\Bigl\{ u=P+iQ: P,Q\in L^2({\mathbb T}; {\mathbb R}): \int_{\mathbb T}(P^2+Q^2)dx\leq K\Bigr\}.\end{equation}
The Gibbs measure on $B_K$ for this micro-canonical ensemble is
\begin{equation}\label{Gibbs}\mu_{K, \beta} (du)=Z_K(\beta )^{-1}{\mathbb I}_{B_K}(u)\exp\Bigl( -{\frac{\beta}{4}}\int_{\mathbb T} \vert u\vert^4dx\Bigr) W(du)\end{equation}
where $W(du)$ is Wiener loop measure, and $Z_K(\beta )$ is a normalizing constant. Lebowitz, Rose and Speer \cite{LRS} proved existence of such an invariant measure, so that for all $K>0$ and $\beta\in {\mathbb R}$ there exists $Z_K(\beta )>0$ such that $\mu_{K,\beta}$ is a Radon probability measure on $B_K\subset L^2({\mathbb T}; {\mathbb R}^2)$. When $\beta=0$, we refer to the measure as free Wiener loop measure, indicating that the dynamics are free of potentials. For $\beta<0$, (\ref{NLS}) is said to be focussing and the Hamiltonian is unbounded below, giving the source of the technical problem.\par
\indent Bourgain \cite{Bo} gave an alternative existence proof using random Fourier series, and showed that the measure is invariant under the flow in the sense that the Cauchy problem is well posed on the support. Further refinements include a result of McKean \cite{M}, that the sample paths are H\"older continuous, and a result from Theorem 1.2(iv) in \cite{BBD} that the invariant measure of the micro-canonical ensemble satisfies a logarithmic Sobolev inequality. Random Fourier series fit naturally into Sturm's theory of metric measure spaces, which we use to reduce some of the analysis to invariant measures on finite-dimensional Hamiltonian systems.\par 
\indent The focusing case for spatial variable $x\in {\mathbb R}$ captures soliton solutions, and \cite{LRS} discuss the possible transition of the system between an ambient bounded random field and a soliton solution. For $x\in {\mathbb T}$, the notion of a spatially localized solution is inapplicable, but some of the \cite{LRS} results are still relevant.\par

\indent In section two, we consider tensor products of Hilbert space $H$ and a $k$-point density matrix. For $\mu_0$ a centered Gaussian measure on $H$, we express a specific integral 
$$J^{(k)} =\int_H \vert u^{\otimes k}\rangle \langle u^{\otimes k} \vert \, \mu_0(du)$$
as a series  of elementary tensors. This calculation involves combinatorial results which are expressed in terms of Knuth's odd and even decompositions of Young diagrams. 
In section three, we use concentration of measure results to show how $ 
\vert u^{\otimes k}\rangle \langle u^{\otimes k} \vert \,$ is close to its mean value $J^{(k)}$ on a  set of large probability. This statement also holds when we replace $\mu_0$ by the Gibbs measure. 
\indent In section 4 we introduce metric probability measure spaces and show how the infinite-dimensional dynamical system (\ref{canonical}) can be approximated by finite-dimensional dynamical systems, particularly involving random Fourier series. In particular, we show that $x\mapsto u(x,t)$ is $\gamma$-H\"older continuous $[0, 2\pi ]\mapsto L^4$ for $0<\gamma <1/16$. We also obtain results on the empirical distributions that arise when we sample solutions of (\ref{canonical}) with respect to Gibbs measure (\ref{Gibbs}), which we use in the numerical experiments in section 7.\par    
\indent Hasimoto observed that (\ref{canonical}) can be expressed as a Lax pair of coupled ordinary differential equations with solutions in $SO(3)$, one of which is the Serret-Frenet system for a moving frame on a curve in ${\mathbb R}^3$. In sections 5 and 6 we consider the evolution of the dynamical system corresponding to Hasimoto frames under the Gibbs measure. In section 7 we present numerical experiments regarding the solutions, which illustrate the nature of frames that arise from the solutions of (\ref{canonical}) for typical elements in the support of the Gibbs measure (\ref{Gibbs}). \par

\section{Tensor products and $k$-point density matrices for Gaussian measure}
Let $H$ be a separable complex Hilbert space, with inner product $\langle \cdot \mid\cdot \rangle$ which is linear in the second argument. We identify the injective tensor product $H\check\otimes H$ with the algebra ${\mathcal L}(H)$ of bounded linear operators on $H$ and the projective tensor product $H\hat\otimes H$ with the ideal ${\mathcal L}^1(H)$ of trace class operators on $H$. For $H=L^2$, the identification is $$f\otimes \bar g=\mid f\rangle\langle g\mid : h\mapsto f(x)\int \bar g(y)h(y)dy.$$ Let $A\in {\mathcal L}^1(H)$ be self-adjoint such that $0\leq A\leq I$, and let $\mu_0$ be a Gaussian measure on $H$ of mean zero and covariance $A$. By the spectral theorem, We can choose an orthonormal basis $(\varphi_j)_{j=1}^\infty$ of $H$ such that $A\varphi_j=\alpha_j\varphi_j$ where the spectrum of $A$ is the closure of $\{\alpha_j: j=1, 2, \dots \}$. Then we introduce mutually independent Gaussian $N(0,1)$ random variables $(\gamma_j)_{j=1}^\infty$ and the vector 
\begin{equation}\label{u}u=\sum_{j=1}^\infty \sqrt{\alpha_j}\gamma_j\varphi_j\end{equation} so that $\mu_0$ is the distribution of $u$ on $H$, as one easily checks by computing the expectation \begin{align}{\mathbb E}\exp (i\langle f, u\rangle )&={\mathbb E}\exp \Bigl( \sum_ji\sqrt{\alpha_j}\langle f, \varphi_j\rangle \gamma_j\Bigr)\nonumber\\
&=\exp\Bigl( \sum_{j} -{\frac{1}{2}}\alpha_j\langle f, \varphi_j\rangle^2\Bigr)\nonumber\\
&=\exp\bigl( -{\frac{1}{2}}\langle Af,f\rangle \bigr)\qquad (f\in H).\end{align} 
Hence $A$ is the mean of rank-one tensors with respect to Gaussian measure
$$A=\int_H \mid u\rangle \langle u\mid \, \mu_0(du).$$
\indent The $k$-fold tensor product $H^{\otimes k}$ can be completed to give a Hilbert space, so that the space $H^{s\otimes k}$ of symmetric tensors gives a closed linear subspace. We consider
$$J^{(k)}=\int_H \vert u^{\otimes k
}\rangle \langle u^{\otimes k}\vert \,\mu_0 (du).$$
An element of $H^{\otimes k}\hat \otimes H^{\otimes k}$ determines a linear operator ${\mathcal L}(H^{\otimes k})$, commonly referred to as a matrix, so $J^{(k)}\in  (L^2)^{s\otimes k}\hat \otimes (L^2)^{\otimes k}$ gives a $k$-point density matrix, or equivalently a trace class operator $J^{(k)} \in{\mathcal L}^1(H^{s\otimes k})$ .\par

\indent Lemma 3.3 of \cite{LNR1} contains calculations regarding $J^{(k)}$ which we have not been able to interpret, particularly line 8 of page 79. Here we calculate $J^{(2)}$ directly, before addressing the case of general $k$. Evidently we have ${\mathbb E}(\gamma_j\gamma_\ell\gamma_m\gamma_n)=0$ if one of the indices $j,\ell,m,n$ is distinct from all the others; otherwise, we have all the indices equal, or two distinct pairs of equal indices. Hence we have
\begin{align}
\int_H \mid u\otimes u\rangle \langle u\otimes u\mid \, \mu_0(du)&=\sum_{j, \ell, m,n} \sqrt{\alpha_j\alpha_\ell\alpha_m\alpha_n}\mid \varphi_j\otimes \varphi_\ell\rangle\langle \varphi_m\otimes\varphi_n\mid {\mathbb E} (\gamma_j\gamma_\ell\gamma_m\gamma_n)\nonumber\\
&=\sum_j 3\alpha_j^2 \mid\varphi_j\otimes \varphi_j\rangle \langle \varphi_j\otimes \varphi_j\mid\nonumber\\
&\quad+ \sum_{j,\ell: j\neq\ell} \alpha_j\alpha_\ell \mid\varphi_j\otimes \varphi_\ell\rangle \langle \varphi_j\otimes\varphi_\ell\mid\nonumber \\
&\quad+ \sum_{j,m: j\neq m}\alpha_j\alpha_m \mid \varphi_j\otimes \varphi_j\rangle\langle \varphi_m \otimes \varphi_m\mid\nonumber\\
&\quad +\sum_{j,\ell: j\neq \ell}\alpha_j\alpha_\ell\mid \varphi_j\otimes\varphi_\ell\rangle \langle \varphi_\ell\otimes\varphi_j\mid\end{align}
and we combine the second and fourth of these to obtain
\begin{align}\int_H \mid u\otimes u\rangle \langle u\otimes u\mid\, \mu_0(du)
&=\sum_j 3\alpha_j^2 \mid \varphi_j\otimes \varphi_j\rangle \langle \varphi_j\otimes \varphi_j\mid\nonumber\\
&\quad+ {\frac{1}{2}}\sum_{j,\ell: j\neq\ell} \alpha_j\alpha_\ell \mid \varphi_j\otimes\varphi_\ell+\varphi_\ell\otimes \varphi_j\rangle \langle \varphi_j\otimes\varphi_\ell +\varphi_\ell\otimes\varphi_j\mid\nonumber\\
&\quad+ \sum_{j,m: j\neq m}\alpha_j\alpha_m \mid \varphi_j\otimes \varphi_j\rangle\langle \varphi_m \otimes \varphi_m\mid\end{align}
which exhibits the right-hand side as a symmetric tensor, in which the final term shows the integral is not diagonal with respect to the orthonormal basis 
$$\Bigl\{ \varphi_j\otimes\varphi_j,\quad (\varphi_j\otimes\varphi_\ell+\varphi_\ell\otimes\varphi_j )/\sqrt{2};\quad  j, \ell\in {\mathbb N}; j\neq \ell\Bigr\}$$ 
of the symmetric tensor product $H\otimes_sH$, hence $J^{(2)}$ is not a multiple of $A\otimes A$.\par 
\indent For $k\in {\mathbb N}$, let $\Pi_k$ be the set of all partitions of $k$ so that $\pi\in \Pi_k$ may be expressed as $k=k_1+k_2+\dots +k_n$ where the row lengths $k_j\in {\mathbb N}$ have $k_1\geq k_2\geq \dots \geq k_n$. Given such $\pi$ and a $n$-element subset $\{ j_1, \dots, j_n\}$ of ${\mathbb N}$, there is a symmetric tensor 
$${\frac{1}{\sqrt{n!}}}\sum_\sigma \varphi_{\sigma (j_1)}^{\otimes k_1}\otimes \dots \otimes \varphi_{\sigma (j_n)}^{\otimes k_n}\in H^{\otimes k}$$
where the sum is over all the permutations $\sigma$ of $\{ j_1, \dots, j_n\}$. The set of all such tensors gives an orthonormal basis of the $k$-fold symmetric tensor product $H^{s\otimes k}$.\par
\indent We express $u$ as in (\ref{u}) and consider the expansion 
\begin{align}\mid u^{\otimes k}\rangle&\langle u^{\otimes k}\mid\nonumber\\
&=\sum_{(m_1, \dots ,m_{2k})\in {\mathbb N}^{2k}} \sqrt{\alpha_{m_1}\dots \alpha_{m_{2k}}}\gamma_{m_1}\dots \gamma_{m_{2k}} \mid \varphi_{m_1}\otimes \dots\otimes \varphi_{m_k}\rangle\langle \varphi_{m_{k+1}}\otimes \dots \otimes \varphi_{m_{2k}}\mid\end{align}
in terms of this orthonormal basis of $H^{s\otimes k}$, and look for the terms that do not vanish after integration with respect to $\mu_0$.  

\begin{defn} (even decomposition) Given $\pi \in \Pi_k$ consider a pair $(\lambda, \rho )\in \Pi_k^2$ with rows $\lambda: k=\ell_1+\ell_2+\dots +\ell_n$ where $\ell_j\in {\mathbb N}\cup \{ 0\}$ and $\rho: k=r_1+r_2+\dots +r_n$ where $r_j \in {\mathbb N}\cup \{0\}$ and 
$$2k_j=\ell_j+r_j\qquad (j=1, \dots , n),$$
so that $\lambda$ and $\rho$ have equal numbers of odd rows;
here rows may have zero lengths, and the rows are not necessarily in decreasing order. We refer to $(\lambda ,\rho )$ as an even decomposition of $\pi$. \end{defn}
\begin{rem} There are various alternative descriptions of even decompositions. We write $\lambda\sim \rho$ if $\lambda$ and $\rho$ are partitions that have equal numbers of boxes and equal numbers of odd rows; evidently $\sim$ is an equivalence relation on the set of partitions. By \cite{K} Theorem 4 there is a bijection between symmetric matrices $A$ that have entries in ${\mathbb N}\cup \{ 0\}$ with column sums $c_1, \dots ,c_n$ and Young tableaux $P$ such that have $c_j$ occurrences of $j$ as entries and number of columns of $P$ of odd length equals the trace of $A$.
Given symmetric matrices $A$ and $B$ with entries in ${\mathbb N}\cup \{ 0\}$ such that $A$ and $B$ have equal traces and equal totals of entries, then the $RSK$ correspondence takes $A$ to $P$ and $B$ to $Q$ where $P$ and $Q$ are Young tableaux with an equal number of boxes, and their transposed diagrams $P'$ and $Q'$ have an equal number of odd rows, so $P'\sim Q'$.\par
\end{rem}

For notational convenience, we also regard $\varphi_j^{\otimes 0}$ as a 
factor which may be omitted in tensor products. Then given such a triple $(\pi ,\lambda, \rho )$ and an $n$-subset $\{j_1, \dots ,j_n\}$ of ${\mathbb N}$,
\begin{equation}\label{term}\alpha_{j_1}^{k_1}\dots \alpha_{j_n}^{k_n}\mid \varphi_{j_1}^{\otimes \ell_1}\otimes \dots \otimes \varphi_{j_n}^{\otimes \ell_n}\rangle\langle \varphi_{j_1}^{\otimes r_1}\otimes \dots \otimes \varphi_{j_n}^{\otimes r_n}\mid {\mathbb E} \bigl( \gamma_{j_1}^{2k_1}\gamma_{j_2}^{2k_2}\dots \gamma_{j_n}^{2k_n}\bigr)\end{equation}
where
\begin{equation}\label{normalmoments}{\mathbb E} \bigl( \gamma_{j_1}^{2k_1}\gamma_{j_2}^{2k_2}\dots \gamma_{j_n}^{2k_n}\bigr)=\prod_{j=1}^n {\frac{(2k_j)!}{2^{k_j}k_j!}}\end{equation}
gives a nonzero summand in $J^{(k)} $. \par
\indent Conversely, let $(\lambda, \rho )\in \Pi_k^2$ and suppose that $\lambda$ and $\rho$ have equal numbers of odd rows, so that after adding zero rows and reordering the rows we have $r_j+\ell_j$ even for all $j$. Then we introduce $2k_j=\ell_j+r_j$ and after a further reordering write $k=k_1+k_2+\dots +k_n$ where $k_j\in {\mathbb N}$ have $k_1\geq k_2\geq \dots \geq k_n$, and we have $\pi \in \Pi_k$ as above. Given a $n$-subset $\{j_1, \dots, j_n\}$ of ${\mathbb N}$, we take $2k_m$ copies of $j_m$ and split them as $\ell_m$ on the bra side and $r_m$ on the ket side of the tensor for $m=1, \dots ,n$, making a contribution as in (\ref{term}). We summarize these results as follows.\par

\begin{prop} The integral $J^{(k)}$ is the sum over the summands (\ref{term}) that arise from a $\pi\in \Pi_k$ with $n$ nonzero rows, a $n$-subset of ${\mathbb N}$, and an even decomposition of $\pi$ into a pair $(\lambda, \rho )\in \Pi_k^2$ where $\lambda$ and $\rho$ have equal numbers of odd rows.\end{prop}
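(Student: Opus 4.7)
The proposition summarises in compact form the calculation already carried out for $k=2$, and the general case proceeds by the same mechanism. My plan has four steps.

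First, I would substitute the representation (\ref{u}) into $|u^{\otimes k}\rangle\langle u^{\otimes k}|$ to obtain a series indexed by $(m_1,\ldots,m_{2k})\in\mathbb{N}^{2k}$ with general term
\[
\sqrt{\alpha_{m_1}\cdots\alpha_{m_{2k}}}\,\gamma_{m_1}\cdots\gamma_{m_{2k}}\,|\varphi_{m_1}\otimes\cdots\otimes\varphi_{m_k}\rangle\langle\varphi_{m_{k+1}}\otimes\cdots\otimes\varphi_{m_{2k}}|.
\]
Interchange of summation with integration against $\mu_0$ is justified by truncating the expansion of $u$ to $j\leq N$, applying Fubini for the resulting finite-dimensional Gaussian expectation, and passing to the limit using $\sum_j\alpha_j=\trace A<\infty$ to control the tail in trace norm.

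Second, I would identify the nonvanishing summands. Since the $\gamma_j$ are mutually independent $N(0,1)$ and odd moments of a standard Gaussian vanish, $\mathbb{E}(\gamma_{m_1}\cdots\gamma_{m_{2k}})=0$ unless every distinct value among the $m_i$ occurs an even number of times in total. Listing the distinct values as $j_1,\ldots,j_n$ with total multiplicities $2k_i$, the tuple $(k_i)$ sorted weakly decreasingly forms a partition $\pi\in\Pi_k$ with $n$ nonzero rows, and by (\ref{normalmoments}) the surviving Gaussian moment equals $\prod_{i=1}^n(2k_i)!/(2^{k_i}k_i!)$.

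Third, to record how the $2k_i$ copies of $j_i$ distribute between the ket positions $m_1,\ldots,m_k$ and the bra positions $m_{k+1},\ldots,m_{2k}$, let $\ell_i$ and $r_i$ denote the respective occurrence counts. Then $\ell_i+r_i=2k_i$ and $\sum_i\ell_i=\sum_i r_i=k$, so $\lambda=(\ell_i)$ and $\rho=(r_i)$ are partitions of $k$ with $n$ parts, possibly zero, in arbitrary order. Since $\ell_i+r_i$ is even, $\ell_i\equiv r_i\pmod 2$, whence $\lambda$ and $\rho$ have equal numbers of odd rows; this is precisely the defining condition of an even decomposition of $\pi$. Conversely, every even decomposition together with a choice of $n$ distinct indices and an assignment of them to the rows arises from some multi-index in this way and yields the summand (\ref{term}).

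The main obstacle, as the displayed $k=2$ expansion already illustrates, is the combinatorial bookkeeping: the multi-index $(m_1,\ldots,m_{2k})$ carries positional information, whereas (\ref{term}) records only the typed tensor with the $\varphi_{j_i}$ grouped in prescribed blocks. I would handle this by viewing the sum in the proposition as running over the triples $(\pi,\lambda,\rho)$, $n$-subsets labelled by the prescribed row ordering, and accounting for the multiplicity of multi-indices producing each typed tensor through the moment factor (\ref{normalmoments}). Once the correspondence is verified against the displayed $k=2$ case, the proposition is an immediate consequence of the preceding two steps.
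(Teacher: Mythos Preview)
Your proposal is correct and follows essentially the same route as the paper. The paper does not give a separate proof of this proposition; it is stated as a summary of the discussion preceding it, which expands $|u^{\otimes k}\rangle\langle u^{\otimes k}|$ as a series over $(m_1,\ldots,m_{2k})$, observes that only terms with each distinct index appearing an even number of times survive integration, and then matches the forward and converse directions of the correspondence between such multi-indices and triples $(\pi,\lambda,\rho)$ together with an $n$-subset. Your Steps~2 and~3 reproduce exactly this two-way argument, and your Step~1 adds a brief justification of the interchange of sum and integral that the paper omits.
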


\section{Concentration of $k$-point matrices for Gibbs measure}
 Let $H=L^2({\mathbb T};{\mathbb R})$ and let $(\gamma_j)_{j\in {\mathbb Z}}$ be mutually independent Gaussian $N(0,1)$ random variables, where $z_j=(\gamma_j+i\gamma_{-j})/\sqrt{2})$ and $z_{-j}=(\gamma_j-i\gamma_{-j})/\sqrt{2}$ for $j\in {\mathbb N}$. 
Then we take Brownian loop in the style
\begin{equation}\label{loop}u(\theta )=\sum_{j\in {\mathbb Z}\setminus\{ 0\}} {\frac{z_j e^{ij\theta}}{\vert j\vert }},\end{equation}
so that Wiener loop $W(du)$ is the distribution of $u\in H$. By orthogonality, we have
\begin{equation}\label{loopnorm}\int_{\mathbb T} \vert u(\theta )\vert^2 {\frac{d\theta}{2\pi}}=\sum_{j\in{\mathbb Z}\setminus \{0\}} {\frac{\vert z_j\vert^2}{j^2}}\end{equation}
so that
$u\in B_{K/2\pi}$ if and only if $\sum_j\gamma_j^2/j^2\leq K.$ Chebyshev's inequality and independence, we have
\begin{equation}\label{fourierbound}{\mathbb P}\Bigl[ \sum_{j\in{\mathbb Z}\setminus \{0\}} {\frac{\gamma_j^2}{j^2}}\geq K\Bigr]\leq e^{-tK}{\frac{\pi \sqrt{2t}}{\sin (\pi \sqrt{2t})}}\qquad (0<t<1/2, K>0).\end{equation}
The low Fourier modes are the predominant terms since one has the estimate 
\begin{equation}\label{fouriertailbound}{\mathbb P}\Bigl[ \sum_{j\in{\mathbb Z}; \vert j\vert \geq m} {\frac{\gamma_j^2}{j^2}}\geq K\Bigr]\leq \exp\bigl( m-Km^2/4\bigr),\qquad (m\in {\mathbb N}, K>0),\end{equation}
which also follows from Chebyshev's inequality and independence.\par

Let $\mu_\lambda (du)=\zeta (\lambda )^{-1}\exp (\lambda  V(u)) W(du)$ where 
$$\zeta (\lambda)=\int_{B_K} \exp (\lambda V(u)) W(du)$$
so that $\mu_\lambda$ is a probability measure; we can take $V(u)=\int_{\mathbb T} u(\theta )^4 d\theta/(2\pi)$ and $W$ to be Brownian loop measure. Here $\mu_\lambda$ is Gibbs measure (\ref{Gibbs}) with the inverse temperature $\beta$, but we prefer to work with $\lambda=-\beta >0$ so that the convexity statements are easier to interpret. 

\begin{thm} Under the family of Gibbs measures (\ref{Gibbs}) associated with $NLS$ (\ref{NLS}), the random variable $u\mapsto \langle u^{\otimes k}\mid T\mid u ^{\otimes k}\rangle $ with $u\in (B_K, L^2, \mu_\lambda )$ and $T\in {\mathcal L}(H^{s\otimes k})$ satisfies a Gaussian concentration of measure, the mean is a Lipschitz continuous function of $\beta$, and the mean for $\beta=0$ is a sum over partitions of $2k$ over even decompositions.\end{thm}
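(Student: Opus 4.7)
My plan is to dispatch the three assertions separately: Gaussian concentration follows from the logarithmic Sobolev inequality for $\mu_\lambda$ via Herbst's argument, the Lipschitz dependence on $\beta$ from differentiation under the integral, and the $\beta=0$ identity from the Proposition of Section 2 applied to Wiener loop measure.

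For concentration of $F_T(u) := \langle u^{\otimes k}\mid T\mid u^{\otimes k}\rangle$, I would invoke the logarithmic Sobolev inequality for $\mu_\lambda$ on $B_K$ cited in the introduction as Theorem 1.2(iv) of \cite{BBD}. Herbst's argument converts this into a Gaussian concentration bound of the form $\mu_\lambda\{|F_T - {\mathbb E}_{\mu_\lambda} F_T| \geq t\} \leq 2\exp(-ct^2/L(T)^2)$, reducing matters to an estimate of the Lipschitz constant $L(T)$ of $F_T$ on $(B_K, L^2)$. The telescoping identity
$$u^{\otimes k} - v^{\otimes k} = \sum_{j=0}^{k-1} u^{\otimes j}\otimes (u-v)\otimes v^{\otimes (k-1-j)}$$
combined with $\|u\|_{L^2}, \|v\|_{L^2} \leq \sqrt{K}$ on $B_K$ gives $\|u^{\otimes k} - v^{\otimes k}\|_{H^{\otimes k}} \leq k K^{(k-1)/2}\|u-v\|_{L^2}$, whence $L(T) \leq 2k\|T\|K^{k-1/2}$.

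For Lipschitz dependence on $\beta$ of $m(\beta):=\int F_T\,d\mu_{K,\beta}$, I would differentiate under the integral sign. With $V(u):=\int_{{\mathbb T}}|u|^4\,dx$,
$$\frac{dm}{d\beta} = -\frac{1}{4}\bigl(\,{\mathbb E}_{\mu_{K,\beta}}[F_T V] - {\mathbb E}_{\mu_{K,\beta}}[F_T]\,{\mathbb E}_{\mu_{K,\beta}}[V]\,\bigr).$$
Since $|F_T|\leq \|T\|K^k$ on $B_K$ by the bound $\|u^{\otimes k}\|_{H^{s\otimes k}}\leq \|u\|_{L^2}^k$, and $V$ has finite second moment under $\mu_{K,\beta}$ uniformly in $\beta$ (a consequence of the integrability of $\exp(-\tfrac{\beta}{4}V)$ against $W$ established in \cite{LRS}), Cauchy--Schwarz bounds the covariance uniformly in $\beta$, giving the claim after integration from $0$ to $\beta$.

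For $\beta=0$, $\mu_{K,0} = W(B_K)^{-1}{\mathbb I}_{B_K} W$, with $W$ the centered Gaussian from (\ref{loop}) of covariance $A$ having eigenvalues $\alpha_j = 1/j^2$ on the Fourier basis $\varphi_j(\theta) = e^{ij\theta}/\sqrt{2\pi}$. The unconditioned integral $\int|u^{\otimes k}\rangle\langle u^{\otimes k}|\,dW$ is exactly the $J^{(k)}$ of Section 2, so the Proposition of Section 2 expresses $\mathrm{tr}(TJ^{(k)})$ as the advertised sum over triples $(\pi, (\lambda,\rho), \{j_1,\dots,j_n\})$ with $(\lambda,\rho)$ an even decomposition of $\pi\in\Pi_k$ (equivalently, a partition of $2k$ into parts $\ell_j+r_j = 2k_j$). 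Passing to $\mu_{K,0}$ requires controlling $\int_{B_K^c}F_T\,dW$, which I would handle by combining the Gaussian tail estimate (\ref{fourierbound}) with the polynomial bound $|F_T(u)|\leq \|T\|\|u\|_{L^2}^{2k}$ and the finiteness of Gaussian moments of $\|u\|_{L^2}^{4k}$ under $W$, via H\"older's inequality. I expect this last step to be the main technical obstacle: the delicate point is to combine an exponentially small tail with merely polynomially integrable moments so that the error is negligible, yielding $m(0) = \mathrm{tr}(TJ^{(k)}) + O(e^{-cK})$, whence the sum-over-even-decompositions structure is inherited.
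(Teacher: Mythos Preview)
Your treatment of the concentration claim matches the paper's Proposition 3.2 closely: both use the logarithmic Sobolev inequality for $\mu_\lambda$ together with the telescoping bound on the Lipschitz constant of $g_T$, followed by Herbst's argument. For the Lipschitz dependence on $\beta$, you take a different route from the paper. The paper (Proposition 3.3) uses Kantorovich--Rubinstein duality and the Otto--Villani implication $LSI\Rightarrow T_2$ to bound $\bigl|\int g\,d\mu_b-\int g\,d\mu_a\bigr|$ by $L\,W_1(\mu_b,\mu_a)$ and then by the square root of the relative entropy, which it computes via the second-order Taylor expansion of $\log\zeta(\lambda)$. Your direct differentiation under the integral sign, yielding $m'(\beta)=-\tfrac14\mathrm{Cov}_{\mu_{K,\beta}}(F_T,V)$, is more elementary and perfectly valid; note however that both approaches ultimately require a bound on $\mathrm{Var}_{\mu_\lambda}(V)$, which is only locally uniform in $\beta$.

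Your approach to the $\beta=0$ statement, however, does not establish what is claimed. You propose to write
\[
m(0)=\trace(TJ^{(k)})+O(e^{-cK})
\]
by controlling $\int_{B_K^c}F_T\,dW$, and then say the even-decomposition structure is ``inherited''. But the theorem asserts an \emph{exact} structural statement about $G_0^{(k)}=\int_{B_K}|u^{\otimes k}\rangle\langle u^{\otimes k}|\,\mu_0(du)$ itself---that it is a sum over the same index set of even decompositions as $J^{(k)}$ (with different numerical coefficients, namely restricted rather than unrestricted Gaussian moments). An approximation up to $O(e^{-cK})$ does not transfer combinatorial structure: the error term has no reason to respect the even-decomposition indexing. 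The paper's device (Proposition 3.4) is to observe that for any sign sequence $(\varepsilon_n)\in\{\pm1\}^{\mathbb Z}$, the map $(\gamma_n)\mapsto(\varepsilon_n\gamma_n)$ preserves both the Gaussian law \emph{and} the ball condition $\sum_n\gamma_n^2/n^2\leq K$, so one may write
\[
G_0^{(k)}=\int_{B_K}\int_{\{\pm1\}^{\mathbb Z}}|u_\varepsilon^{\otimes k}\rangle\langle u_\varepsilon^{\otimes k}|\,d\varepsilon\,\mu_0(du)
\]
and perform the $\varepsilon$-average first. This annihilates every monomial $\gamma_{m_1}\cdots\gamma_{m_{2k}}$ in which some index appears to an odd power, leaving precisely the even-decomposition terms---the same reduction that led to (\ref{term}) for $J^{(k)}$, now carried out inside $B_K$. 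That symmetry argument is the missing idea in your Part 3.
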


The statements in this theorem will be proved in this section. They involve the integral
\begin{equation}\label{Gk}G_\lambda^{(k)}=\int_{B_K}\mid u^{\otimes k}\rangle\langle u^{\otimes k}\mid \mu_\lambda (du)\end{equation}
\noindent where $\mu_\lambda$ is the Gibbs measure for NLS. In the defocussing case, the $k$-particle density matrix of an interacting quantum system with suitable initial conditions converges to its classical analogue see \cite{AGT} (2.16) for the 1D case and \cite{LNR2} for 2D and 3D. \par
\indent We can write $u=P+iQ$ for real variables $(P,Q)$ and interpret $\langle u^{\otimes k}\mid T\mid u ^{\otimes k}\rangle$ as a homogeneous polynomial in $(p,q)$ of total degree $2k$.   
\indent The following result gives concentration of measure for Lipschitz functions on $(B_K, L^2, \mu_\lambda )$, and shows that $k$-point matrices are concentrated near to their mean value.\par 
\begin{prop}
For $T\in {\mathcal L}(H^{s\otimes k})$ with operator norm $\Vert T\Vert$, let $g_T:B_K\rightarrow {\mathbb C}$ by $g_{T}(u)=\langle u^{\otimes k}\mid T\mid u^{\otimes k}\rangle$. Then 
there exists $\alpha =\alpha (\beta,K)>0$ such that\end{prop}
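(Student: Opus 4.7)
The plan is to deduce the Gaussian concentration bound from two ingredients: (i) a logarithmic Sobolev inequality for $\mu_\lambda$ on $(B_K, L^2)$, which the introduction attributes to Theorem 1.2(iv) of \cite{BBD}, and (ii) a quantitative Lipschitz estimate for $g_T$ on $B_K$ with respect to the $L^2$ metric. Once these are in hand, Herbst's argument gives the desired Gaussian tail bound
\begin{equation*}
\mu_\lambda \bigl\{ u \in B_K \st \snorm{g_T(u) - {\mathbb E}_{\mu_\lambda} g_T}\geq r\bigr\}\leq 2\exp\bigl( -\alpha r^2/\Vert T\Vert^2\bigr)
\end{equation*}
for a constant $\alpha=\alpha (\beta ,K)>0$ depending on $k$, with the dependence on $T$ entering only through the operator norm.

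First I would establish the Lipschitz bound. For $u,v\in B_K$ one has $\snorm{u^{\otimes k}}_{H^{\otimes k}}=\snorm{u}_2^k\leq K^{k/2}$, and the standard telescoping identity
\begin{equation*}
u^{\otimes k}-v^{\otimes k}=\sum_{j=0}^{k-1} u^{\otimes j}\otimes (u-v)\otimes v^{\otimes (k-1-j)}
\end{equation*}
gives $\snorm{u^{\otimes k}-v^{\otimes k}}\leq k K^{(k-1)/2}\snorm{u-v}_2$. Applying this to
\begin{equation*}
g_T(u)-g_T(v)=\ip<u^{\otimes k}-v^{\otimes k}, T(u^{\otimes k})>+\ip<v^{\otimes k}, T(u^{\otimes k}-v^{\otimes k})>
\end{equation*}
yields $\snorm{g_T(u)-g_T(v)}\leq 2kK^{k-\frac{1}{2}}\Vert T\Vert\,\snorm{u-v}_2$, so $g_T$ is Lipschitz with constant $L=2kK^{k-\frac{1}{2}}\Vert T\Vert$.

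Next I would invoke the logarithmic Sobolev inequality for $\mu_\lambda$. Writing $\mu_\lambda=\zeta (\lambda)^{-1}e^{\lambda V}W$ with $W$ the free loop measure (for which an LSI is classical) and $V(u)=\int_{\mathbb T} \vert u\vert^4 \,d\theta /(2\pi )$ bounded on $B_K$ by Sobolev embedding against the $L^2$ constraint, the Holley--Stroock perturbation lemma combined with the Gaussian LSI for $W$ produces an LSI for $\mu_\lambda$ with constant $C(\beta ,K)$; alternatively one appeals directly to \cite{BBD}. Herbst's argument then applied to $g_T$ (or rather its real and imaginary parts, handling the complex-valued range by a factor of two) yields the concentration inequality with $\alpha =(8Ck^2 K^{2k-1})^{-1}$.

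Finally, to obtain the Lipschitz dependence of ${\mathbb E}_{\mu_\lambda} g_T$ on $\beta$ I would differentiate in $\beta$, producing the covariance
\begin{equation*}
{\frac{\partial}{\partial \beta}}{\mathbb E}_{\mu_\lambda }g_T=-{\textstyle\frac{1}{4}}\mathop{\mathrm{Cov}}\nolimits_{\mu_\lambda}\bigl( g_T,\textstyle\int_{\mathbb T}\vert u\vert^4 dx\bigr),
\end{equation*}
and control this via Cauchy--Schwarz together with uniform $L^2(\mu_\lambda)$-bounds on $g_T$ and on $\int \vert u\vert^4$, the latter following from the bound $\int \vert u\vert^4\leq (2\pi K)^2\snorm{u}_\infty^2$ is not useful directly, so instead I would use Gaussian moment control on $B_K$ and a direct expansion of $V$ via random Fourier series as in \eqref{loop}. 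The mean-for-$\beta =0$ statement reduces to the combinatorial identity of the previous section: on $B_K$ the distribution of $u$ under free loop measure truncated to $B_K$ differs from the Gaussian case already computed in the proposition above by the indicator ${\mathbb I}_{B_K}$, and the main term is the sum \eqref{term} over even decompositions summed against $T$. The principal obstacle in this programme is verifying the LSI on the constrained ball $B_K$ with quantitative constants, since the $L^2$ constraint is not strictly convex along the flow; I would circumvent this by finite-dimensional truncation using the Fourier basis \eqref{loop}, establishing uniform LSI constants in the finite truncations, and then passing to the limit via the approximation scheme announced in section 4.
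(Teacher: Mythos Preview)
Your core argument for the concentration inequality is essentially the paper's own: the same Lipschitz bound $L=2kK^{k-1/2}\Vert T\Vert$ via telescoping, the logarithmic Sobolev inequality from \cite{BBD}, and Herbst's argument to pass from LSI to Gaussian concentration. This is correct and matches the paper's proof step for step.

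Two remarks, however. First, your aside that $V(u)=\int_{\mathbb T}\vert u\vert^4\,d\theta/(2\pi)$ is bounded on $B_K$ ``by Sobolev embedding against the $L^2$ constraint'' is not right: $B_K$ is a ball in $L^2$, and $L^2({\mathbb T})$ does not embed in $L^4({\mathbb T})$, so $V$ is genuinely unbounded on $B_K$ and Holley--Stroock does not apply directly. You correctly fall back on the direct citation of \cite{BBD}, which is exactly what the paper does; the LSI there is proved by a different mechanism and does not require $V$ bounded.

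Second, everything from ``Finally, to obtain the Lipschitz dependence\ldots'' onward addresses separate statements (Propositions~3.3 and~3.4 in the paper), not the proposition at hand. Your proposed route for the $\beta$-dependence via differentiation and a covariance bound is a legitimate alternative, but the paper instead uses the Otto--Villani transportation inequality $W_1(\mu_b,\mu_a)^2\leq (2/\alpha)\,\mathrm{Ent}(\mu_b\mid\mu_a)$ and an exact expression for the relative entropy in terms of the variance of $V$, which is cleaner and avoids the moment estimates you flag as an obstacle.
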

\begin{equation}\mu_\lambda \Bigl(\bigl\{ u\in B_K: \vert g_{T}(u)-\trace(G_\lambda^{(k)}T)\vert>r\bigr\}\Bigr) \leq 4\exp \Bigl( {\frac{-\alpha r^2}{32k^2K^{2k-1}\Vert T\Vert^2}}\Bigr)\qquad (r>0).\end{equation}

\begin{proof} Here $g_{T}$ has mean value
\begin{equation}\label{mean}\int_{B_K}g_T(u) \mu_\lambda (du)=\int_{B_K}\langle u^{\otimes k}\mid T\mid u^{\otimes k}\rangle \mu_\lambda(du)=\trace (G_\lambda^{(k)}T).\end{equation}
Also $g_T$ is Lipschitz, with
\begin{align} \vert g_T(u)-g_T(v)\vert&\leq \Vert T\Vert \sum_{j=0}^{2k-1} \Vert u\Vert^{j}\Vert v\Vert^{2k-j-1}\Vert u-v\Vert\nonumber\\
&\leq 2kK^{k-1/2}\Vert T\Vert \Vert u-v\Vert\qquad (u,v\in B_K).\end{align}
By the logarithmic Sobolev inequality Theorem 1.2(iv) of \cite{BBD}, there exists $\alpha=\alpha (K,\beta )>0$ such that
\begin{equation}\label{logsob}\int_{B_K}f(u)^2\log f(u)^2\mu_\lambda (du)\leq \int_{B_K} f(u)^2\mu_\lambda (du)\log \Bigl(\int_{B_K} f(u)^2\mu_\lambda (du)\Bigr)+{\frac{2}{\alpha}}\int_{B_K} \Vert\nabla f\Vert^2 \mu_\lambda (du)\end{equation}
for all continuously differentiable $f: B_K\rightarrow {\mathbb R}$. In particular, we choose 
$$f(u)=\exp \bigl( \lambda\Re \bigl( g_T(U)-\trace(G_t^{(k)}T)\bigr) /2\Bigr)$$
and we deduce that the moment generating function
\begin{equation}\label{Cheb}\varphi (r)=\int_{B_K} \exp\Bigl(r\Re \bigl( g_T(u)-\trace(G_\lambda^{(k)}T)\bigr)\Bigr)\mu_\lambda (du)\end{equation}
satisfies $\varphi (0)=1$, $\varphi'(0)=1$ and the differential inequality
\begin{equation}\label{diffineq}r\varphi' (r)\leq \varphi (r)\log\varphi (r)+{\frac{8k^2K^{2k-1}\Vert T\Vert^2r^2}{\alpha}}\end{equation}
hence
$$\varphi (r) \leq \exp\Bigl( {\frac{8k^2K^{2k-1}\Vert T\Vert^2r^2}{\alpha}}\Bigr).$$
One can conclude the proof by a standard application of Chebyshev's inequality to the integral for $\varphi$ in (\ref{Cheb}).
\end{proof}
To make full use of the previous result, one needs to know the mean $\trace(G_\lambda^{(k)}T)$ as in (\ref{mean}), which depends upon the measure in (\ref{Gk}). The following shows how the mean can vary with the inverse temperature $\beta =-\lambda$. 
\begin{prop}
For $g:B_K\rightarrow {\mathbb{R}}$ an $L$-Lipschitz function, the mean values of $g$ with respect to the measures $\mu_\lambda$ satisfy 
\begin{equation}\Bigl( \int_{B_K}g(u)\bigl(\mu_b(du)-\mu_a(du)\bigr)
\Bigr)^2\leq {\frac{L^2(b-a)^2}{2\alpha }}\int\!\!\!\int_{B_K\times B_K}(V(u)-V(w))^2\mu_\lambda(du)\mu_\lambda(dw)\end{equation}
where $\alpha$ is the constant in (\ref{logsob}) for $\mu_a$, and some $\lambda\in (a,b)$.

\end{prop}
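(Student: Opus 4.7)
The plan is to view $F(\lambda)=\int_{B_K}g\,d\mu_\lambda$ as a smooth function of the inverse temperature and apply the mean value theorem together with a Poincar\'e inequality derived from the log-Sobolev bound (\ref{logsob}). First I would differentiate $F$ using $\mu_\lambda(du)=\zeta(\lambda)^{-1}e^{\lambda V(u)}W(du)$ and the identity $\zeta'(\lambda)/\zeta(\lambda)=\int V\,d\mu_\lambda$; a direct computation gives
\[
F'(\lambda)=\int gV\,d\mu_\lambda-\Bigl(\int g\,d\mu_\lambda\Bigr)\Bigl(\int V\,d\mu_\lambda\Bigr)=\mathrm{Cov}_{\mu_\lambda}(g,V).
\]
The classical mean value theorem then produces $\lambda\in(a,b)$ with $F(b)-F(a)=(b-a)\,\mathrm{Cov}_{\mu_\lambda}(g,V)$.

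Second, I would rewrite the covariance in symmetric form,
\[
\mathrm{Cov}_{\mu_\lambda}(g,V)=\tfrac{1}{2}\int\!\!\!\int_{B_K\times B_K}\bigl(g(u)-g(w)\bigr)\bigl(V(u)-V(w)\bigr)\mu_\lambda(du)\mu_\lambda(dw),
\]
and apply the Cauchy-Schwarz inequality to obtain
\[
\mathrm{Cov}_{\mu_\lambda}(g,V)^2\leq \tfrac{1}{2}\,\mathrm{Var}_{\mu_\lambda}(g)\int\!\!\!\int_{B_K\times B_K}(V(u)-V(w))^2\mu_\lambda(du)\mu_\lambda(dw),
\]
after using the standard identity $\int\!\int(g(u)-g(w))^2d\mu_\lambda d\mu_\lambda=2\,\mathrm{Var}_{\mu_\lambda}(g)$ and likewise for $V$.

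Third, I would extract a Poincar\'e inequality from (\ref{logsob}) by the usual linearization $f=1+\varepsilon h$ with $h$ of zero mean and $\varepsilon\to 0$, giving $\mathrm{Var}_{\mu_\lambda}(g)\leq \alpha^{-1}\int\Vert\nabla g\Vert^2\,d\mu_\lambda\leq L^2/\alpha$, since $g$ is $L$-Lipschitz. Substituting this bound into the previous display and multiplying by $(b-a)^2$ produces exactly the stated inequality.

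The main obstacle is a technical one about the constant: the Poincar\'e inequality is applied at the intermediate parameter $\lambda$ given by the mean value theorem, but the statement cites the constant $\alpha$ of (\ref{logsob}) at $\mu_a$. To reconcile this one must argue that the LSI of \cite{BBD} Theorem 1.2(iv) holds with a constant that is (uniformly) valid on the interval $[a,b]$, for instance by noting continuity of $\alpha(K,\beta)$ in $\beta$ and passing to the infimum on a compact parameter range, or by stating (\ref{logsob}) with a $\lambda$-independent constant from the outset. Once this uniformity is in hand, the rest of the argument is just Cauchy-Schwarz and calculus.
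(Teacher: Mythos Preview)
Your argument is correct and more elementary than the paper's, but it proves a slightly different version of the inequality than the one stated, and this is exactly the point you flag at the end.

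The paper proceeds via optimal transport rather than by differentiating $F(\lambda)=\int g\,d\mu_\lambda$. It bounds $\bigl|\int g\,d\mu_b-\int g\,d\mu_a\bigr|\le L\,W_1(\mu_b,\mu_a)$ by Kantorovich duality, then invokes the Otto--Villani implication $\mathrm{LSI}(\alpha)\Rightarrow T_2(\alpha)$ for the \emph{reference} measure $\mu_a$ to get $W_1(\mu_b,\mu_a)^2\le \tfrac{2}{\alpha}\,\mathrm{Ent}(\mu_b\mid\mu_a)$. The relative entropy is then computed explicitly: since $d\mu_b/d\mu_a=\zeta(a)\zeta(b)^{-1}e^{(b-a)V}$, one has $\mathrm{Ent}(\mu_b\mid\mu_a)=(b-a)\int V\,d\mu_b+\log\zeta(a)-\log\zeta(b)$, and a second-order Taylor expansion of $\log\zeta$ produces the variance term $\tfrac{(b-a)^2}{4}\iint(V(u)-V(w))^2\,d\mu_\lambda\,d\mu_\lambda$ for some $\lambda\in(a,b)$. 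In this route the constant $\alpha$ is genuinely the LSI constant at $\mu_a$, because the transportation inequality is applied with $\mu_a$ as base; the intermediate $\lambda$ enters only through the entropy computation.

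Your covariance--Cauchy--Schwarz--Poincar\'e route is cleaner and avoids any transport machinery, but the Poincar\'e inequality you use is at $\mu_\lambda$, so the $\alpha$ you obtain is the LSI/Poincar\'e constant at $\mu_\lambda$, not at $\mu_a$. The uniformity argument you sketch (continuity of $\alpha(K,\beta)$ in $\beta$, or a $\lambda$-independent constant) would close the gap, but the paper's transport approach sidesteps this issue entirely and delivers the constant at $\mu_a$ as stated without any appeal to uniformity.
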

\begin{proof} We observe that $\log \zeta (\lambda)$ is a convex function of $\lambda>0$ and by the mean value theorem, there exists $a<\lambda<b$ such that 
\begin{align} \label{logconvex}\log\zeta (a)-\log\zeta (b)&=-(b-a){\frac{\zeta'(b)}{\zeta(b)}}+{\frac{(b-a)^2}{2}}\Bigl({\frac{\zeta''(\lambda)}{\zeta (\lambda)}}-{\frac{\zeta'(\lambda )^2}{\zeta (\lambda )^2}}\Bigr) \nonumber\\
&=-(b-a)\int_{B_K} V(u)\mu_b(du)\nonumber\\
&\quad +{\frac{(b-a)^2}{4}}\int\!\!\!\int_{B_K\times B_K} ( V(u)-V(w))^2\mu_\lambda(du)\mu_\lambda(dw).\end{align}
Let $W_1(\mu_a, \mu_b)$ be the Wasserstein transportation distance between $\mu_b$ and $\mu_a$ for the cost function $\Vert u-v\Vert_{L^2}$, as in page 34 of \cite{V1}. Then by duality we have
$$\Bigl\vert\int_{B_K} g(u)\mu_b(du)-\int_{B_K}g(u)\mu_a(du)\Bigr\vert\leq L W_1(\mu_b,\mu_a).$$
 By results of Otto and Villani discussed in \cite{V1} pages 291-2, the logarithmic Sobolev inequality of Theorem 1.2(iv) \cite{BBD} implies a transportation cost inequality 
\begin{equation}\label{transport}W_1(\mu_b,\mu_a)\leq
\Bigl( {\frac{2}{\alpha}}{\hbox{Ent}}(\mu_b\mid \mu_a)\Bigr)^{1/2}\end{equation}
in the style of Talagrand, where the relative entropy is
\begin{align}{\hbox{Ent}}(\mu_b\mid\mu_a)&=\int_{B_K} \log {\frac{d\mu_b}{d\mu_a}} \mu_b(da)\nonumber\\
&=(b-a)\int_{B_K} V(u)\mu_b(du) -(\log \zeta (b)-\zeta (a))\nonumber\\
&={\frac{(b-a)^2}{4}}
\int\!\!\!\int_{B_K\times B_K}(V(u)-V(w))^2\mu_\lambda(du)\mu_\lambda(dw),\end{align}
where the final step follows from (\ref{logconvex}). The stated result follows on combining these inequalities.
\end{proof}

\begin{prop} The integral $G_0^{(k)}$ from (\ref{Gk}) is the sum over the terms (\ref{term}) that arise from a $\pi\in \Pi_k$ with $n$ nonzero rows, a $n$-subset of ${\mathbb N}$, and an even decomposition of $\pi$ into a pair $(\lambda, \rho )\in \Pi_k^2$ where $\lambda$ and $\rho$ have equal numbers of odd rows.\end{prop}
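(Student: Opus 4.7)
The plan is to reduce the claim to the preceding proposition for $J^{(k)}$ by identifying the free Wiener loop measure as a centred Gaussian measure on $L^2({\mathbb T})$, and then using the symmetry of the indicator ${\mathbb I}_{B_K}$ to show that restriction to $B_K$ does not destroy the parity condition that governs which tensors of the form (\ref{term}) survive integration.

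First, I would view $W$ as the centred Gaussian measure whose covariance is diagonalised by the Fourier basis $\varphi_j(\theta)=e^{ij\theta}/\sqrt{2\pi}$ with eigenvalues $\alpha_j=2\pi/j^2$ for $j\in{\mathbb Z}\setminus\{0\}$, as exhibited by (\ref{loop}). Inserting this Fourier expansion into $\vert u^{\otimes k}\rangle\langle u^{\otimes k}\vert$ gives the same term-by-term expansion used just before the previous proposition, and so
$$G_0^{(k)}=Z_K(0)^{-1}\sum_{(m_1,\dots,m_{2k})}\sqrt{\alpha_{m_1}\cdots\alpha_{m_{2k}}}\,{\mathbb E}\bigl[{\mathbb I}_{B_K}(u)\,\gamma_{m_1}\cdots\gamma_{m_{2k}}\bigr]\,\vert\varphi_{m_1}\otimes\cdots\otimes\varphi_{m_k}\rangle\langle\varphi_{m_{k+1}}\otimes\cdots\otimes\varphi_{m_{2k}}\vert.$$
By (\ref{loopnorm}), the event $u\in B_K$ is determined by $\sum_j \gamma_j^2/j^2\leq K$, so ${\mathbb I}_{B_K}(u)$ is a function of the squares $\gamma_j^2$ alone; in particular it is even in each coordinate $\gamma_j$ separately. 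Hence the truncated mixed moment ${\mathbb E}[{\mathbb I}_{B_K}\gamma_{m_1}\cdots\gamma_{m_{2k}}]$ vanishes whenever any index $j$ appears an odd number of times among $(m_1,\dots,m_{2k})$, by the same parity argument that underlies the previous proposition. The surviving indexings are exactly those in which the multiset $\{m_1,\dots,m_{2k}\}$ consists of an $n$-subset of $\mathbb{Z}\setminus\{0\}$ with even multiplicities $2k_j$, i.e.\ a choice of $\pi\in\Pi_k$ with rows $k_j$, an $n$-subset of $\mathbb{Z}\setminus\{0\}$, and a split of the $2k_j$ copies of index $j_m$ into $\ell_m$ on the bra side and $r_m$ on the ket side, which is precisely an even decomposition $(\lambda,\rho)$ of $\pi$. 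This establishes that $G_0^{(k)}$ is a sum of terms of the exact form (\ref{term}), with the Gaussian moment (\ref{normalmoments}) replaced by the truncated analogue $Z_K(0)^{-1}{\mathbb E}[{\mathbb I}_{B_K}(u)\prod_m \gamma_{j_m}^{2k_m}]$.

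The main obstacle is that, unlike the unrestricted Gaussian case underlying $J^{(k)}$, these truncated moments no longer factorise across the indices $j_m$, because the indicator ${\mathbb I}_{B_K}$ couples all Fourier modes through the single quadratic constraint in (\ref{loopnorm}). However, only the even-in-each-coordinate symmetry of ${\mathbb I}_{B_K}$ is needed for the structural statement that the sum ranges over exactly the same index set of triples $(\pi,\lambda,\rho)$ together with an $n$-subset as for the Gaussian $J^{(k)}$; the precise value of each coefficient is not claimed in the proposition. If desired, positivity together with the tail estimates (\ref{fourierbound}) and (\ref{fouriertailbound}) would then let one compare the truncated coefficients with the clean Gaussian values (\ref{normalmoments}), showing that the truncation effect is negligible for $K$ large relative to $k$.
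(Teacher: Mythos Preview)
Your proof is correct and follows essentially the same idea as the paper's: both rest on the observation that ${\mathbb I}_{B_K}(u)$ depends only on the squares $\gamma_j^2$, so that any term in the Fourier expansion of $\vert u^{\otimes k}\rangle\langle u^{\otimes k}\vert$ in which some $\gamma_j$ appears to an odd power integrates to zero against $\mu_0$. The paper packages this symmetry slightly differently, by introducing an auxiliary average over sign sequences $(\varepsilon_j)\in\{\pm 1\}^{\mathbb Z}$ with Haar measure $d\varepsilon$ and writing $G_0^{(k)}=\int_{B_K}\int_{\{\pm 1\}^{\mathbb Z}}\vert u_\varepsilon^{\otimes k}\rangle\langle u_\varepsilon^{\otimes k}\vert\, d\varepsilon\,\mu_0(du)$; the inner $d\varepsilon$ integral then kills the odd-multiplicity terms just as your direct parity argument does. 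Your remark that the truncated moments no longer factorise across indices, and that this does not affect the structural claim about which $(\pi,\lambda,\rho)$ contribute, is a useful clarification that the paper leaves implicit.
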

\begin{proof} The measure $\mu_0$ is a Wiener loop measure restricted to $B_k$. For any sequence $(\varepsilon_n)_{n\in {\mathbb Z}}\in \{\pm 1\}^{\mathbb Z}$, the sequence $(\gamma_n)_{n\in {\mathbb Z}}$ with $\gamma_n$ mutually independent $N(0,1)$ Gaussian random variables has the same distribution as the sequence $(\varepsilon_n\gamma)_{n\in {\mathbb Z}}$. Also, the condition $\sum_{n\in{\mathbb Z}\setminus \{ 0\}}\gamma_n^2/n^2\leq K$ does not change under this transformation. Let $u_\varepsilon (\theta) =\sum'_j \varepsilon_j\zeta_je^{ij\theta}/\vert j\vert$. We therefore have
$$G_0^{(k)}=\int_{B_K}\int_{\{ \pm 1\}^{\mathbb Z}}\mid u_\varepsilon^{\otimes k}\rangle \langle u_\varepsilon^{\otimes k}\mid d\varepsilon \mu_0(du)$$
where $d\varepsilon$ is the Haar probability measure on the Cantor group $\{ \pm 1\}^{\mathbb Z}$. 
We can therefore compute the inner integral in this expression for $G_0^{(k)}$ by the same calculation that led to the corresponding statement for $J^{(k)}$, since we only used the even decomposition of partitions to derive (\ref{term}).\end{proof}  
\indent We have
\begin{equation}\Bigl[ \sum_{j\in {\mathbb Z}\setminus\{0\}}{\frac{\gamma_j^2}{j^2}}\leq K\Big]\subseteq\bigcap_{j\in {\mathbb Z}\setminus\{0\}}\Bigl[\gamma_j^2\leq Kj^2\Bigr]\end{equation}
where the sets are independent under the Gaussian measure $d{\mathbb P}$, so we have a substitute for (\ref{normalmoments})
\begin{equation}\int_{B_K}\gamma_{j_1}^{2k_1}\gamma_{j_2}^{2k_2}\dots \gamma_{j_n}^{2k_n}\mu_0(du)\leq {\mathbb P}(B_K)^{-1}\prod_{\ell=1}^n \int_{[\gamma^2\leq Kj_\ell^2]}\gamma^{2k_\ell}d{\mathbb P},\end{equation}
where ${\mathbb P}(B_K)$ satisfies (\ref{fourierbound}) and there is an approximate formula
\begin{equation}\int_{[\gamma^2\leq Kj_\ell^2]}\gamma^{2k_\ell}d{\mathbb P}={\frac{(2k_\ell )!}{2^{k_\ell}k_\ell !}}\exp\Bigl( -\Bigl({\frac{j_\ell^2K}{2}}\Bigr)^{k_\ell-1/2}{\frac{e^{-_\ell^2K/2}}{\Gamma (k_\ell+1/2)}}\Bigr).\end{equation}

\section{Concentration for metric measure spaces}
\indent In a similar spirit, we give a concentration result for $k$-fold stochastic integrals. This result is in the spirit of the integrability criteria of\cite{CM} which relates to a single variable.  Let $H^1$ be the Sobolev space of $v\in L^2({\mathbb T}; {\mathbb C})$ that are absolutely continuous with derivative $v'\in L^2({\mathbb T}; {\mathbb C})$. Let $h_j\in H^1$ for $j=1, \dots ,k$ be such that $\sum_{j=1}^k \int (h_j')^2 dx\leq 1$, and consider $\Phi: B_K^k\mapsto {\mathbb R}^k$ given by
\begin{equation}\Phi: (u_j)_{j=1}^k \mapsto \Bigl( \int u_j(x) h_j'(x)dx\Bigr)_{j=1}^k =\Bigl(-\int h_j(x) du_j\Bigr)_{j=1}^k.\end{equation}
The following result describes the distribution of this ${\mathbb C}^k$-valued random variable.
\begin{prop}\label{klogsob} Let $\nu_K$ be the probability measure on ${\mathbb C}^k$ that is induced from $\mu_{K, \beta}^{\otimes k}$ by $\Phi$. Then there exists $\alpha_K>0$ independent of $k$ such that
\begin{equation}\label{logsobG}\int_{{\mathbb C}^k} G(w)^2\log \Bigl( G(w)^2/\int G^2 d\nu_K\Bigr) \nu_K(dw) \leq {\frac{2}{\alpha_K}} \int_{{\mathbb C}^k}\Vert\nabla G(w)\Vert^2 \nu_K (dw)\end{equation}
for all $G\in C^1_{c}({\mathbb C}^k; {\mathbb R})$.
The distribution $\nu_K$ has mean $x_0$ and satisfies
\begin{equation} \int_{{\mathbb C}^k} e^{t^2\Vert x- x_0 \Vert^2/2} \nu_K(dx)\leq \Bigl(1-{\frac{t^2}{\alpha}}\Bigr)^{-k}\qquad (t^2<\alpha ).\end{equation}

\end{prop}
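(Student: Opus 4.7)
Plan. I would build both conclusions from the LSI for $\mu_{K,\beta}$ supplied by Theorem~1.2(iv) of \cite{BBD}, transferring it through tensorization, through the map $\Phi$, and through the product structure that $\Phi$ inherits from acting coordinate-wise.

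First, by Gross's subadditivity of entropy, the product measure $\mu_{K,\beta}^{\otimes k}$ on $B_K^k$ equipped with the $L^2$-product Hilbert norm satisfies \eqref{logsob} with the same constant $\alpha_K$, crucially independent of $k$. Next, $\Phi$ is $1$-Lipschitz from $(B_K^k,\|\cdot\|_{L^2})$ to $(\mathbb{C}^k,\|\cdot\|)$: its $j$-th component is a continuous linear functional of operator norm $\|h_j'\|_{L^2}$, so by Cauchy–Schwarz
\[
\|\Phi(u)-\Phi(v)\|^2 \;\leq\; \sum_{j=1}^k \|h_j'\|^2 \,\|u_j-v_j\|^2 \;\leq\; \sum_{j=1}^k \|u_j-v_j\|^2,
\]
since each weight obeys $\|h_j'\|^2 \leq \sum_i \|h_i'\|^2 \leq 1$. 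Because $1$-Lipschitz push-forward preserves the LSI constant (the chain rule bound $\|\nabla(G\circ\Phi)\| \leq \|\nabla G\|\circ\Phi$ applied inside the LSI for $\mu_{K,\beta}^{\otimes k}$), $\nu_K = \Phi_*\mu_{K,\beta}^{\otimes k}$ satisfies \eqref{logsobG} with $\alpha = \alpha_K$.

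For the exponential moment, the crucial structural observation is that each $\Phi_j$ depends only on the $j$-th factor $u_j$, so the components of $\Phi$ are \emph{independent} under $\mu_{K,\beta}^{\otimes k}$, whence $\nu_K$ factors as $\bigotimes_{j=1}^k \nu_K^{(j)}$ on $\mathbb{C}$ and
\[
\int_{\mathbb{C}^k} e^{t^2\|x-x_0\|^2/2}\,d\nu_K \;=\; \prod_{j=1}^k \int_{\mathbb{C}} e^{t^2|x_j-(x_0)_j|^2/2}\,d\nu_K^{(j)}.
\]
Each marginal $\nu_K^{(j)}$ is the push-forward of $\mu_{K,\beta}$ under a linear functional of norm $\|h_j'\|\leq 1$, so inherits the LSI with constant at least $\alpha_K$ by the same Step 2 argument. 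Applying this LSI to the test function $G(y) = \exp(t^2|y-y_0|^2/4)$ and writing $\phi(t) = \int G^2\,d\nu_K^{(j)}$ yields a differential inequality in the spirit of \eqref{diffineq}; integrating from $\phi(0)=1$ on $t^2<\alpha$ produces the Gaussian-type bound $(1-t^2/\alpha)^{-1}$, and multiplying over $j$ gives the asserted $(1-t^2/\alpha)^{-k}$. That $x_0 = \int \Phi\,d\mu_{K,\beta}^{\otimes k}$ is the mean follows from the linearity of $\Phi$ and Jensen.

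The main obstacle is the sharpness of the per-marginal bound. A direct integration of the differential inequality produced by LSI alone yields only the weaker estimate $\exp(t^2/(\alpha-t^2))$, and recovering the exact chi-square rate $(1-t^2/\alpha)^{-1}$ requires supplementing the LSI with additional convexity—available in the defocussing regime $\beta>0$ through log-concavity of the Gibbs density (Brascamp–Lieb or the Bakry–Émery $\mathrm{CD}(\alpha_K,\infty)$ criterion), or through a direct Gaussian-space computation that exploits the linearity of each $\Phi_j$ against the Wiener-loop reference measure, with the conditioning to $B_K$ restoring the required convexity when $\beta<0$.
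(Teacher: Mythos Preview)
Your derivation of the logarithmic Sobolev inequality \eqref{logsobG} is correct and matches the paper: tensorize the LSI from \cite{BBD} to $\mu_{K,\beta}^{\otimes k}$ with the same constant $\alpha_K$, check that $\Phi$ is $1$-Lipschitz by Cauchy--Schwarz, and push forward.

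For the exponential moment, however, you have correctly diagnosed the gap in your own approach but not closed it. The product structure you observe is valid, but as you admit, applying LSI directly to $G(y)=\exp(t^2|y-y_0|^2/4)$ on each one-dimensional marginal gives only $\exp(t^2/(\alpha-t^2))$, not the claimed $(1-t^2/\alpha)^{-1}$; your suggested remedies (Brascamp--Lieb, $\mathrm{CD}(\alpha_K,\infty)$) are not available in the focussing case and in any event are not invoked in the paper. The paper bypasses this difficulty by a Gaussian duality trick that needs neither the product structure of $\nu_K$ nor any additional convexity. From the LSI on $\mathbb{C}^k$ one runs the Herbst argument on the linear functional $x\mapsto \Re\langle x-x_0,y\rangle$ to obtain
\[
\int_{\mathbb{C}^k} e^{t\Re\langle x-x_0,y\rangle}\,\nu_K(dx)\;\leq\; e^{t^2\|y\|^2/(2\alpha)}\qquad (y\in\mathbb{C}^k).
\]
One then integrates both sides against the standard Gaussian $e^{-\|y\|^2/2}\,dy$ on $\mathbb{C}^k\cong\mathbb{R}^{2k}$. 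On the left, Fubini and completing the square in $y$ give $\int e^{t^2\|x-x_0\|^2/2}\,\nu_K(dx)$; on the right, the Gaussian integral in $2k$ real variables yields $(1-t^2/\alpha)^{-k}$ for $t^2<\alpha$. This is the missing idea: the exponent $-k$ arises from the real dimension $2k$, not from a product of $k$ per-marginal bounds.
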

\begin{proof} We observe that for all $u=(u_j)_{j=1}^k\in B_K^k$ and $v=(v_j)_{j=1}^k\in B_K^k$ we have
\begin{equation}\Vert \Phi (u)-\Phi (v)\Vert^2_{{\mathbb R}^k}\leq \sum_{j=1}^k \int (h_j'(x))^2 dx\sum_{j=1}^k \int \vert u_j(x)-v_j(x)\vert^2 dx\end{equation}
so $\Phi: (B_K^k, \ell^2(L^2))\rightarrow ({\mathbb R}^k, \ell^2)$ is Lipschitz with constant one. Each metric probability space $(B_K, L^2, \mu_{K, \beta})$ satisfies a logarithmic Sobolev inequality
with constant $\alpha_K>0$ by \cite{BBD}, and the probability space $(B_K^k,\ell^2(L^2) ,\mu_{K,\beta}^{\otimes k})$ is a direct product of the metric probability spaces $(B_K, L^2, \mu_{K,\beta})$, hence also satisfies a logarithmic Sobolev inequality 

\begin{equation}\int_{B_K^k} G\circ \Phi (u)^2\log \Bigl( G\circ \Phi (u)^2/\int G\circ \Phi ^2 d\mu_{K, \beta}^{\otimes k}\Bigr) \mu_{K, \beta}^{\otimes k}(du) \leq {\frac{2}{\alpha_K}} \int_{B_K^k}\Vert\nabla G\circ \Phi (u)\Vert^2 \mu_{K, \beta}^{\otimes k} (du)\end{equation}  
with constant $\alpha_K$ independent of $k$, by section 22 of \cite{V2}.

We introduce $x_0=\int_{{\mathbb C}^k} x \nu_K(dx)$ and consider
\begin{equation} \varphi (t)=\int_{{\mathbb C}^k} e^{t\Re \langle x-x_0, y\rangle}\nu_K(dx)\end{equation}
which satisfies $\varphi (0)=1$, $\varphi'(0)=0$ and the differential inequality
\begin{equation} t\varphi'(t)\leq \varphi (t)\log\varphi(t)+{\frac{t^2\Vert y\Vert^2}{2\alpha}}\end{equation}
follows from (\ref{logsobG}). This gives
\begin{equation}\int_{{\mathbb C}^k} e^{t\Re \langle x-x_0,y\rangle} \nu_K(dx)\leq e^{t^2\Vert y\Vert^2/(2\alpha )}\end{equation}
which we integrate against $e^{-\Vert y\Vert^2/2}$, where $y\in {\mathbb C}^k ={\mathbb R}^{2k}$, to obtain
\begin{equation} \int_{{\mathbb C}^k} e^{t^2\Vert x-x_0\Vert^2/2} \nu_K(dx)\leq \Bigl(1-{\frac{t^2}{\alpha}}\Bigr)^{-k}\qquad (t^2<\alpha ).\end{equation}
\end{proof}

The probability space $(B_K,L^2, \mu_{K, \beta })$ has a tangent space associated with infinitesimal translations. Let $H^1$ be the Sobolev space of $v\in L^2({\mathbb T}; {\mathbb C})$ that are absolutely continuous with derivative $v'\in L^2({\mathbb T}; {\mathbb C})$; let $H^{-1}=(H^1)^*$ be the linear topological dual space for the pairing $\langle v, w\rangle \mapsto \int_{\mathbb T} v(x)\overline{w(x)} dx/(2\pi)$ as interpreted via Fourier series. Then there is a Radonifying triple of continuous linear inclusions 
\begin{equation}\label{triple}H^1\rightarrow L^2\rightarrow (H^1)^*\end{equation}
associated with the Gibbs measure $\mu_K$. The space $H^1$ has orthonormal basis\par \noindent $(e^{in\theta}/\sqrt{n^2+1})_{n=-\infty}^\infty$ and the covariance matrix of Wiener loop is $R_0={\hbox{diag}}[ 1/(1+n^2)]_{n=-\infty}^\infty$ with respect to this basis. By simple estimates, one can deduce that there exists, for each $\beta$ and $K>0$, a self-adjoint, nonnegative and trace class operator $R$ such that
\begin{equation} \bigl\langle Rf,g\bigr\rangle_{H^1} =\int_{B_K}\, \int_{[0, 2\pi ]} f'ud\theta \int_{[0, 2\pi ]} \bar g'\bar ud\theta\,  \mu_{K, \beta} (du),\end{equation}
which gives the covariance matrix of the Gibbs measure on $H^1$. This is essentially $G^{(1)}_{-\beta}$, up to the identification of Hilbert spaces in (\ref{triple}).\par
\indent Cameron and Martin computed the density with respect to the Wiener measure that results from the linear translation $u\mapsto u+v$ for $v\in H^1$; their results extends to Gibbs measure with some modifications.

We momentarily suppress the dependence of functions upon time, and consider for $p,q\in H^1$, the linear transformation $P+iQ\mapsto P+p+i(Q+q)$. 
Cameron and Martin proved that free Wiener measure $(\beta =0)$ is mapped to a measure that absolutely continuous with respect to the free Wiener measure. Likewise, Gibbs measure is mapped to a measure absolutely continuous with respect to Gibbs measure, so we can regard $H^1$ as fibres of a tangent space to $(B_K, L^2, \mu_{K, \beta })$.  
We make a polar decomposition $P+iQ=\kappa e^{i\sigma}$ with $\kappa=\sqrt{P^2+Q^2}$ and consider $\tau={\frac{\partial \sigma}{\partial x}}$. 
\begin{prop} For $p,q\in H^1$  the
functional 
\begin{equation} L(P,Q)= L( \kappa e^{i\sigma })=-\int_{\mathbb T} {\frac{\partial p}{\partial x}}\kappa \cos\sigma dx -\int_{\mathbb T} {\frac{\partial q}{\partial x}}\kappa\sin\sigma\, dx\end{equation}
is a Lipschitz functional of $P+iQ=\kappa e^{i\sigma}$ such that 
\begin{align}
\int_{B_K} \exp \Bigl( s L(P,Q)-&s\int_{B_K}  L(P,Q)\mu_{K, \beta } (dPdQ)\Bigr) \mu_{K, \beta}(dPdQ)\nonumber\\
&\leq \exp \Bigl\{ Cs^2\int_{\mathbb T} 
\Bigl(\Bigl( {\frac{\partial p}{\partial x}}\Bigr)^2+ \Bigl( {\frac{\partial q}{\partial x}}\Bigr)^2\Bigr)dx\Bigr\}\qquad (s\in {\mathbb R}). \end{align}
\end{prop}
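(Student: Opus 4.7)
The key observation is that in polar coordinates $\kappa\cos\sigma=P$ and $\kappa\sin\sigma=Q$, so the functional reduces to the \emph{linear} form
\[L(P,Q)=-\int_{\mathbb T}\Bigl(\frac{\partial p}{\partial x}P+\frac{\partial q}{\partial x}Q\Bigr)\,dx.\]
Writing $L_0^2:=\int_{\mathbb T}\bigl((\partial p/\partial x)^2+(\partial q/\partial x)^2\bigr)dx$, Cauchy--Schwarz immediately gives $|L(u)-L(v)|\leq L_0\,\|u-v\|_{L^2}$, which establishes the Lipschitz claim. Thus the problem collapses to a Gaussian concentration estimate for a bounded linear functional on the metric probability space $(B_K,L^2,\mu_{K,\beta})$.

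My plan for the moment generating function bound is to run the Herbst argument, in exactly the style used earlier in Proposition 3.2. Writing $\bar L=\int L\,d\mu_{K,\beta}$, $\tilde L=L-\bar L$, and $\varphi(s)=\int_{B_K}\exp(s\tilde L(u))\,\mu_{K,\beta}(du)$, I would apply the logarithmic Sobolev inequality (\ref{logsob}) of \cite{BBD} to the test function $f(u)=\exp(s\tilde L(u)/2)$. Because $L$ is linear, its $L^2$-gradient is the constant element $(-\partial p/\partial x,-\partial q/\partial x)$ of norm $L_0$, whence $\|\nabla f\|_{L^2}^2=(s^2/4)L_0^2 f^2$ pointwise. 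Substituting into (\ref{logsob}) yields the familiar differential inequality
\[s\varphi'(s)\leq \varphi(s)\log\varphi(s)+\frac{s^2 L_0^2}{2\alpha}\,\varphi(s),\]
together with the initial data $\varphi(0)=1$, $\varphi'(0)=0$. Dividing by $s^2\varphi(s)$ shows $(d/ds)\bigl(s^{-1}\log\varphi(s)\bigr)\leq L_0^2/(2\alpha)$, and integrating from $0$ gives $\varphi(s)\leq\exp(s^2 L_0^2/(2\alpha))$. This is the stated estimate with $C=1/(2\alpha)$.

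The main obstacle is purely a regularity issue: the inequality (\ref{logsob}) is formulated for $f\in C^1(B_K;\mathbb R)$, whereas $f(u)=\exp(s\tilde L(u)/2)$ is Fr\'echet smooth but unbounded on $B_K$. I would handle this by approximating $p,q$ by their partial Fourier sums $p_N,q_N$, producing cylinder functionals $L_N$ that depend only on finitely many Fourier modes of $u$ and are therefore $C^1$ (after a standard cut-off and mollification). The gradient bound $\|\nabla L_N\|_{L^2}\leq L_0$ is uniform in $N$, so the Herbst argument applies to each $L_N$ with the same constant, and the conclusion for $L$ follows by letting $N\to\infty$, invoking $L_N\to L$ in $L^2(\mu_{K,\beta})$ together with Fatou's lemma applied to $\varphi(s)$. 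A secondary minor point is verifying that the centering constant $\bar L$ is finite, which follows from the Lipschitz bound together with $\int\|u\|_{L^2}\mu_{K,\beta}(du)\leq K^{1/2}$.
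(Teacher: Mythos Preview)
Your proposal is correct and follows essentially the same path as the paper: establish that $L$ is Lipschitz on $(B_K,L^2)$ with constant $\Lambda=\bigl(\int_{\mathbb T}((\partial p/\partial x)^2+(\partial q/\partial x)^2)\,dx\bigr)^{1/2}$, then invoke the Gaussian concentration that follows from the logarithmic Sobolev inequality of \cite{BBD} via the Herbst argument.

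Your route to the Lipschitz bound is in fact cleaner than the paper's. You observe directly that $\kappa\cos\sigma=P$ and $\kappa\sin\sigma=Q$, so $L$ is a \emph{bounded linear} functional on $L^2$, whence the Lipschitz constant is immediate by Cauchy--Schwarz. The paper instead records that the Jacobian of $(P,Q)\mapsto(\kappa,\kappa\sigma)$ lies in $SO(2)$, integrates by parts to rewrite $L$ as $\int p\,\partial_xP+\int q\,\partial_xQ$ (expressed in the $(\kappa,\sigma)$ variables), and then asserts the same bound on the operator norm; this arrives at the identical constant $\Lambda$ but through an unnecessary detour. Once the Lipschitz constant is in hand, both arguments finish identically by citing the concentration inequality derived from LSI, and your explicit Herbst computation with the cylinder-function approximation is the standard justification that the paper leaves implicit.
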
\vskip.05in
\noindent \begin{proof} Note that $P+iQ\mapsto \kappa$ is $1$-Lipschitz with
$$\bigl[ \sigma\kappa , \kappa\nabla\sigma\bigr]={\frac{1}{\sqrt{P^2+Q^2}}}\begin{bmatrix} P&-Q\\ Q&P\end{bmatrix}\in SO(2).$$
Also $\kappa^2\Vert {\hbox{Hess}}\, \sigma\Vert$ is bounded. We have
\begin{equation} L(P,Q)
=\int p\Bigl( {\frac{\partial \kappa }{\partial x}}\cos\sigma -\kappa \tau\sin\sigma \Bigr)dx +\int q\Bigl( {\frac{\partial \kappa }{\partial x}}\sin\sigma +\kappa \tau\cos\sigma \Bigr)dx\end{equation}
which is bounded on $L^2$ with norm $\Lambda$ where 
\begin{equation}\Lambda^2\leq \int_{\mathbb T} \Bigl(\Bigl( {\frac{\partial p}{\partial x}}\Bigr)^2+ \Bigl( {\frac{\partial q}{\partial x}}\Bigr)^2\Bigr)dx.\end{equation}
By the concentration of measure theorem for $\nu_K$, we deduce the stated inequality. \end{proof}

\noindent {\bf Definition} We say that $(M,d,\mu )$ satisfies $T_2(\alpha )$ if 
\begin{equation}\label{T2inequality} W_2(\nu, \mu )^2\leq {\frac{2}{\alpha }}{\hbox{Ent}}(\nu \mid \mu )\end{equation}
for all probability measures $\nu$ that are of finite relative entropy with respect to $\mu $. The notation credits Talagrand, who developed the theory of such transportation inequalities. Otto and Villani showed that $LSI(\alpha )$ implies $T_2(\alpha )$; see \cite{V1} and \cite{V2}\par
\vskip.05in

\begin{thm}\label{sanov} Let $(M,d,\mu )$ be a metric probability space that satisfies $T_2(\alpha )$; let\par
\noindent $(M^N, \ell^2(d), \mu^{\otimes N})$ be the direct product metric probability space. Let $L_N^\xi=N^{-1}\sum_{j=1}^N \delta_{\xi_j}$ be the empirical distribution for $\xi=(\xi_j)_{j=1}^N\in M^N$ where $\xi_j$ distributed as $\mu$. Then the concentration inequality holds
\begin{equation}\label{eq:sanov}\mu ^{\otimes N}\Bigl(\Bigl\{ \xi: \in M^N: \bigl\vert W_p(L^\xi_N, \mu )-{\mathbb E}W_p(L_N,\mu )\bigr\vert >\varepsilon \,\Bigr\}\Bigr)\leq 2e^{-N\alpha \varepsilon^2/2}\qquad (\varepsilon >0)\end{equation}
for $p=1,2$.\end{thm}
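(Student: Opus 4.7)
The plan is to establish the concentration inequality by combining three standard ingredients: tensorization of the Talagrand $T_2$ transportation inequality, the Bobkov-G\"otze characterization of sub-Gaussian concentration of Lipschitz functions, and a coupling bound that makes $\xi\mapsto W_p(L_N^\xi ,\mu )$ Lipschitz with a constant of order $N^{-1/2}$.

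First I would tensorize: the hypothesis that $(M,d,\mu )$ satisfies $T_2(\alpha )$ passes to the product space $(M^N,\ell^2(d),\mu^{\otimes N})$ with the same constant $\alpha $, independent of $N$, by Talagrand's tensorization argument (\cite{V2}, Chapter 22). Since $W_1\leq W_2$, the product in particular satisfies $T_1(\alpha )$. The Bobkov-G\"otze duality theorem identifies $T_1(\alpha )$ with the sub-Gaussian moment generating bound
$$\int_{M^N} \exp\Bigl( t\bigl(F-\ts\int F\, d\mu^{\otimes N}\bigr)\Bigr) d\mu^{\otimes N} \leq e^{t^2/(2\alpha )}\qquad (t\in {\mathbb R})$$
for every $1$-Lipschitz $F:M^N\to {\mathbb R}$, and a Chebyshev-type argument then yields the two-sided deviation bound $2\exp (-\alpha \varepsilon^2/(2L^2))$ for an $L$-Lipschitz $F$.

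Next I would verify that $F(\xi )=W_p(L_N^\xi ,\mu )$ is $N^{-1/2}$-Lipschitz on $(M^N,\ell^2(d))$ for $p\in\{1,2\}$. The triangle inequality for $W_p$ gives $\vert W_p(L_N^\xi ,\mu )-W_p(L_N^\eta ,\mu )\vert \leq W_p(L_N^\xi ,L_N^\eta )$, and the diagonal coupling $\xi_j\leftrightarrow \eta_j$ of the two empirical measures is an admissible transport plan, so
$$W_p(L_N^\xi ,L_N^\eta )^p\leq N^{-1}\sum_{j=1}^N d(\xi_j,\eta_j)^p .$$
For $p=2$ this is already $N^{-1}\snorm{\xi -\eta}_{\ell^2(d)}^2$; for $p=1$ one passes from $\ell^1$ to $\ell^2$ by Cauchy-Schwarz. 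In both cases $L=1/\sqrt N$. Substituting $L^2=1/N$ into the sub-Gaussian deviation bound from the previous step produces
$$\mu^{\otimes N}\bigl(\bigl\{\xi\in M^N: \bigl\vert W_p(L_N^\xi ,\mu )-{\mathbb E}W_p(L_N,\mu )\bigr\vert >\varepsilon \bigr\}\bigr)\leq 2e^{-N\alpha \varepsilon^2/2},$$
as required.

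The main technical ingredient, and the step that really carries the content of the theorem, is the dimension-free tensorization of $T_2$: it is the fact that the constant on $(M^N,\ell^2(d),\mu^{\otimes N})$ does not degrade with $N$ that produces the factor $N$ in the exponent. The Herbst-type Chebyshev argument that converts $T_1$ into sub-Gaussian concentration and the Wasserstein coupling that controls the Lipschitz constant of $\xi\mapsto W_p(L_N^\xi ,\mu )$ are then essentially bookkeeping.
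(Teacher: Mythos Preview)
Your proof is correct and follows essentially the same route as the paper: tensorize $T_2(\alpha)$ to the product $(M^N,\ell^2(d),\mu^{\otimes N})$, observe that $\xi\mapsto W_p(L_N^\xi,\mu)$ is $N^{-1/2}$-Lipschitz via the triangle inequality and the diagonal coupling $\pi=N^{-1}\sum_j\delta_{(\xi_j,\eta_j)}$, then apply the sub-Gaussian concentration for Lipschitz functions and Chebyshev. The only cosmetic difference is that you invoke Bobkov--G\"otze explicitly to pass from $T_1$ to the exponential moment bound, whereas the paper simply writes down that bound directly.
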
 
\begin{proof}
The map between metric spaces
\begin{equation}L_N : (M^N, \ell^2(d))\rightarrow ({\hbox{Prob}}(M), W_2):\quad (x_j)_{j=1}^N\mapsto {\frac{1}{N}}\sum_{j=1}^N \delta_{x_j}\end{equation}
associated with the empirical distribution is $1/\sqrt{N}$-Lipschitz.  Let $(x_j)_{j=1}^N, (y_j)_{j=1}^N\in M^N$ and consider the probability measure on $M\times M$ given by
\begin{equation}\label{plan}\pi ={\frac{1}{N}}\sum_{j=1}^N\delta_{(x_j, y_j)}\end{equation}
which has marginals $L_N^x=N^{-1}\sum_{j=1}^N\delta_{x_j}$ and 
$L_N^y=N^{-1}\sum_{j=1}^N\delta_{y_j}$, hence gives a transport plan with cost 
\begin{equation}\label{Lipschitz}W_2(L_N^x,L_N^y)^2\leq \int\!\!\int_{M\times M}d(x,y)^2\pi (dxdy)={\frac{1}{N}}\sum_{j=1}^Nd(x_j,y_j)^2.\end{equation}
\indent 
Suppose that $(M,d,\mu )$ satisfies $T_2(\alpha )$. Then we take $N$ independent samples $\xi_1, \dots , \xi_N$, each distributed as $\mu$ so they have joint distribution $\mu^{\otimes N}$ on $(M^N, \ell^2(d))$, where by independence \cite{BB} Theorem 1.2, $(M^N, \ell^2(d), \mu^{\otimes N})$ also satisfies $T_2(\alpha )$. By forming the empirical distribution, we obtain a map $L_N:(M^N, \ell^2(d))\rightarrow ({\hbox{Prob}}M, W_2)$. Then  $\varphi (\xi) =\sqrt{N}W_p(L^\xi_N,\mu )$ is $1$-Lipschitz $(M^N, \ell^2(d))\rightarrow {\mathbb R}$, since by the triangle inequality and (\ref{Lipschitz}),
\begin{align}\vert \varphi (\xi) -\varphi (\eta) \vert&\leq \sqrt{N}W_p(L_N^\xi, L_N^\eta )\nonumber\\
&\leq \sqrt{N}W_2(L_N^\xi, L_N^\eta )\nonumber\\
&\leq \sum_{j=1}^N d(\xi_j,\eta_j)^2;\end{align}
hence $\varphi$ satisfies the concentration inequality
\begin{equation} \int_{M^N} \exp \Bigl( t\varphi (\xi ) -t\int \varphi d\mu^{\otimes N}\Bigr)d\mu^{\otimes N} \leq \exp (t^2/(2\alpha ))\qquad (t\in{\mathbb R}).\end{equation}
\noindent Then the stated concentration inequality follows from Chebyshev's inequality.\par
\end{proof}
\indent Theorem \ref{sanov} gives a metric version of Sanov's theorem on the empirical distribution; see page 70 of \cite{DS}. There are related results in Bolley's thesis \cite{Bl}. By \cite{CEMS}, Theorem \ref{sanov} applies to Haar probability measure on $SO(3)$ and normalized area measure on ${\mathbb S}^2$, as is relevant in section 7 below. However, to ensure that
${\mathbb E}W_2(L_N, \mu )\rightarrow 0$ as $N\rightarrow \infty$, it is convenient to reduce to one-dimensional distributions, where we use the following integral formula. For distributions $\mu$ and $\nu$ on ${\mathbb R}$ with cumulative distribution functions $F$ and $G$, we write $W_p(\mu, \nu )=W_p(F,G)$.\par

\begin{prop}\label{prop:fluctuation} Let $\xi_1$ be a real random variable with finite fourth moment, and cumulative distribution function $F$. Let $\xi_1, \dots ,\xi_N$ be mutually independent copies of $\xi_1$ giving an empirical measure $L^\xi_N=N^{-1}\sum_{j=1}^N\delta_{\xi_j}$ with cumulative distribution function $F^\xi_N(t).$\par
\indent Then
\begin{equation} \int_{{\mathbb R}^N}W_1(F^\xi_N,F)\mu^{\otimes N}(d\xi )\leq {\frac{1}{\sqrt{N}}}\int_{-\infty}^\infty \sqrt{F(t)(1-F(t))}dt.\end{equation}
\end{prop}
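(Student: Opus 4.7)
The plan is to use the classical one-dimensional representation of the Wasserstein $1$-distance as the $L^1$-distance between cumulative distribution functions, then exchange expectation with the integral in $t$, and finally estimate the pointwise fluctuation of the empirical cumulative distribution function by a variance calculation.

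First I would recall that for two probability measures on $\mathbb{R}$ with cumulative distribution functions $F$ and $G$, one has the identity
\begin{equation*}
W_1(F,G) = \int_{-\infty}^{\infty} |F(t)-G(t)|\, dt,
\end{equation*}
which follows either from Kantorovich--Rubinstein duality together with the fact that optimal transport on $\mathbb{R}$ is achieved by monotone rearrangement, or directly from the formula $W_1(F,G)=\int_0^1|F^{-1}(u)-G^{-1}(u)|\,du$ by Fubini. Applying this representation to $G=F^{\xi}_N$ and then invoking Tonelli's theorem (the integrand being nonnegative) gives
\begin{equation*}
\int_{\mathbb{R}^N} W_1(F^{\xi}_N,F)\,\mu^{\otimes N}(d\xi) = \int_{-\infty}^{\infty} \mathbb{E}\bigl|F^{\xi}_N(t)-F(t)\bigr|\,dt.
\end{equation*}

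Next I would analyze the inner expectation at fixed $t$. Since $N F^{\xi}_N(t) = \sum_{j=1}^N \mathbb{I}(\xi_j\le t)$ is a sum of i.i.d.\ Bernoulli$(F(t))$ random variables, it has mean $NF(t)$ and variance $NF(t)(1-F(t))$. Jensen's inequality (or Cauchy--Schwarz) then yields
\begin{equation*}
\mathbb{E}\bigl|F^{\xi}_N(t)-F(t)\bigr| \le \Bigl(\mathbb{E}\bigl|F^{\xi}_N(t)-F(t)\bigr|^2\Bigr)^{1/2} = \sqrt{\frac{F(t)(1-F(t))}{N}}.
\end{equation*}

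Substituting this pointwise bound back into the integral over $t$ gives the desired inequality
\begin{equation*}
\int_{\mathbb{R}^N} W_1(F^{\xi}_N,F)\,\mu^{\otimes N}(d\xi) \le \frac{1}{\sqrt{N}}\int_{-\infty}^{\infty} \sqrt{F(t)(1-F(t))}\,dt.
\end{equation*}
There is no genuine obstacle here; the main thing to check carefully is that the right-hand side is finite, for which the finite fourth moment hypothesis is more than enough, since $\sqrt{F(t)(1-F(t))}$ decays like $\min(F(t), 1-F(t))^{1/2}$ at $\pm\infty$ and the finite second moment already forces $\int \min(F,1-F)^{1/2}\,dt<\infty$ via the layer-cake representation $\int_0^\infty \mathbb{P}(|\xi_1|>s)^{1/2}\,ds$ and Chebyshev. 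Thus the only delicate point is the initial representation formula for $W_1$ in one dimension; once this is in hand, the argument is a textbook application of Fubini together with the binomial variance.
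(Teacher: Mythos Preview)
Your proof is correct and follows essentially the same route as the paper: the paper also writes $W_1(F_N,F)=\int|F_N(t)-F(t)|\,dt$, swaps expectation and integral, bounds $\mathbb{E}|F_N(t)-F(t)|$ by the square root of the variance $N^{-1}F(t)(1-F(t))$, and invokes Chebyshev with the fourth-moment hypothesis to ensure finiteness of the resulting integral. Your added remark that a finite second moment already suffices for $\int\sqrt{F(1-F)}\,dt<\infty$ is correct and slightly sharpens the discussion.
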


\begin{proof} Let $H$ be Heaviside's unit step function; then $F^\xi_N(t)=N^{-1}\sum_{j=1}^N H(t-\xi_j)$, so $\sqrt{N}(F_N(t)-F(t))$ is a sum of mutually independent and bounded random variables with mean zero. Also, as in the weak law of large numbers, we have
\begin{align} \label{WLLN}{\mathbb E}W_1(F_N,F)&=\int_{-\infty}^\infty {\mathbb E}\vert F_N(t)-F(t)\vert\,dt\nonumber\\
&\leq\int_{-\infty}^\infty \Bigl( {\mathbb E}\bigl( (F_N(t)-F(t))^2\bigr)\Bigr)^{1/2}\, dt\nonumber\\ 
&={\frac{1}{\sqrt{N}}}\int_{-\infty}^\infty \sqrt{F(t)(1-F(t))}\, dt,\end{align}
where the integral is finite by Chebyshev's inequality since ${\mathbb E}\xi^4$ is finite. Compare \cite{BGV}.\par
\end{proof}

\begin{prop}\label{prop:fluctuationS2}
Suppose that $\xi_1$ has distribution $\mu$ on ${\mathbb S}^2$ where $\mu$ is absolutely continuous with respect to the normalized area measure $\nu_1$, and $d\mu=fd\nu_1$ where $f$ is bounded with $\Vert f\Vert_\infty\leq M$. Let $\xi_j$ be mutually independent copies of $\xi_1$, and let $L_N^\xi$ be the empirical measure from $N$ samples. Then 
$$\int_{({\mathbb S}^2)^N}W_1(L_N^\xi , \mu )d\mu^{\otimes N}=O(N^{-1/4})\qquad (N\rightarrow\infty ).$$\end{prop}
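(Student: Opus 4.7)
The plan is to combine a mesh approximation of ${\mathbb S}^2$ with Kantorovich--Rubinstein duality for $W_1$, and then optimize over the mesh size. The essential leverage is that ${\mathbb S}^2$ is a compact two-dimensional manifold, so its covering number at scale $\varepsilon$ grows like $\varepsilon^{-2}$, and one pays a cost $\varepsilon$ intra-cell against a binomial fluctuation cost $\sqrt{m/N}$ inter-cell.

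First, fix $\varepsilon>0$ and partition ${\mathbb S}^2$ into disjoint Borel cells $A_1,\dots, A_m$ of diameter at most $\varepsilon$, with chosen centers $c_i\in A_i$. A Voronoi decomposition about a minimal $\varepsilon$-net in the Riemannian metric gives such a partition with $m\le C_0\varepsilon^{-2}$ for an absolute constant $C_0$. For any $1$-Lipschitz $\phi:{\mathbb S}^2\to {\mathbb R}$, adding a constant does not change $\int\phi\,d(L_N^\xi-\mu)$, so one may assume $\phi(c_1)=0$ and hence $\vert\phi(c_i)\vert\le \operatorname{diam}({\mathbb S}^2)=\pi$. Setting $\phi_\varepsilon=\sum_{i=1}^m \phi(c_i){\mathbb I}_{A_i}$, the Lipschitz condition gives $\Vert \phi-\phi_\varepsilon\Vert_\infty\le \varepsilon$, whence
\begin{equation*}\Bigl\vert\int \phi\, d(L_N^\xi-\mu)\Bigr\vert\le 2\varepsilon + \pi\sum_{i=1}^m \vert L_N^\xi(A_i)-\mu(A_i)\vert.\end{equation*}
Taking the supremum over $\phi$ yields the same bound on $W_1(L_N^\xi,\mu)$.

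Then take expectations with respect to $\mu^{\otimes N}$. The counts $NL_N^\xi(A_i)$ are binomial$(N,\mu(A_i))$, so $\int \vert L_N^\xi(A_i)-\mu(A_i)\vert\,d\mu^{\otimes N}\le \sqrt{\mu(A_i)/N}$. Cauchy--Schwarz combined with $\sum_i \mu(A_i)=1$ yields
\begin{equation*}\sum_{i=1}^m \sqrt{\mu(A_i)/N}\le \sqrt{m/N}\le\sqrt{C_0}\,\varepsilon^{-1}N^{-1/2}.\end{equation*}
Hence $\int W_1(L_N^\xi,\mu)\,d\mu^{\otimes N}\le 2\varepsilon + C_1\varepsilon^{-1}N^{-1/2}$, and the choice $\varepsilon\asymp N^{-1/4}$ delivers the claimed $O(N^{-1/4})$ bound.

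The main technical point I expect to need care is the verification of the covering estimate $m\le C_0\varepsilon^{-2}$; this is standard for a compact Riemannian surface via volume comparison for geodesic balls, but must be used with explicit constants. The boundedness hypothesis $\Vert f\Vert_\infty\le M$ on the Radon--Nikodym derivative does not enter the Cauchy--Schwarz step above, and its role appears to be to permit (if desired) a dyadic refinement that would deliver the sharper rate $\sqrt{\log N/N}$; the simpler $N^{-1/4}$ bound above is all that is required for the numerical investigations in section 7.
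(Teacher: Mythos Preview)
Your proof is correct and follows essentially the same strategy as the paper: partition ${\mathbb S}^2$ into cells of diameter $\delta$ (your $\varepsilon$), approximate a $1$-Lipschitz test function by a step function, control the discretized term via binomial variance, then balance $\delta$ against $\delta^{-1}N^{-1/2}$ by choosing $\delta=N^{-1/4}$. The one substantive difference is in the Cauchy--Schwarz step. The paper writes the discretized term as $\sum_\ell a_\ell(L_N^\xi(E_\ell)-\mu(E_\ell))$ with $a_\ell$ the $\nu_1$-average of $g$ over $E_\ell$, applies Cauchy--Schwarz with weights $\nu_1(E_\ell)$, and then bounds $\sum_\ell \mu(E_\ell)/\nu_1(E_\ell)$ by $M S_\delta$; this is where the bounded-density hypothesis $\Vert f\Vert_\infty\le M$ actually enters. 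Your route instead bounds $\sum_i\sqrt{\mu(A_i)}\le\sqrt{m}$ directly by Cauchy--Schwarz against the constant sequence, which uses only $\sum_i\mu(A_i)=1$ and the covering bound $m\le C_0\varepsilon^{-2}$. So your observation that the density bound is unnecessary for the $N^{-1/4}$ rate is correct; your argument is a mild simplification of the paper's and in fact applies to any probability measure on ${\mathbb S}^2$.
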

\begin{proof}
Let $g:{\mathbb S}^2\rightarrow {\mathbb R}$ be $1$-Lipschitz,
and suppose without loss of generality that $g$ has $\int_{{\mathbb S}^2} g(x)\nu_1(dx)=0$; then $g$ is bounded with $\Vert g\Vert_\infty \leq \pi$. Given $\delta>0$, by considering
 squares for coordinates in longitude and colatitude, we choose 
disjoint and connected subsets $E_\ell$ with diameter ${\hbox{diam}}(E_\ell )\leq \delta$ and $\nu_1(E_\ell )\leq \delta^2$ and $\mu (E_\ell)\leq M\nu_1(E)
$ such that $\cup_\ell E_\ell ={\mathbb S}^2$. We can arrange that there are $S_\delta$ such sets $E_\ell$, where $S_\delta\leq C/\delta^2$. Let ${\mathcal F}$ be the $\sigma$-algebra that is generated by the $E_\ell$,
take conditional expectations in $L^2(\nu_1)$,
and observe that
\begin{align} \int_{{\mathbb S}^2} g(x)dL_N^\xi -\int_{{\mathbb S}^2} g(x) d\mu (x)&=\int_{{\mathbb S}^2} \bigl(g(x)-{\mathbb{E}}g\mid {\mathcal F})
\bigr) dL_N^\xi + \int_{{\mathbb S}^2} \bigl({\mathbb{E}}(
g\mid {\mathcal F}
)-g(x)\bigr) d\mu (x)
\nonumber\\
&\quad +
\int_{{\mathbb S}^2} {\mathbb{E}}
(g\mid {\mathcal F})(x)\bigl( 
dL_N^\xi(x)-d\mu (x)\bigr)
\end{align}
where we have bounds
\begin{align}\label{local}\Bigl\vert 
\int_{{\mathbb S}^2} \bigl(g(x)-{\mathbb{E
}}(g\mid {\mathcal F})
\bigr) dL_N^\xi\Bigr\vert &\leq \sup_\ell
{\hbox{Lip}}(g){\hbox{diam}}(E_\ell
)\leq \delta,\\
\Bigl\vert 
\int_{{\mathbb S}^2} \bigl(g(x)-{\mathbb{E}}(g\mid {\mathcal F})
\bigr) d\mu\vert &\leq \sup_\ell
{\hbox{Lip}}(g){\hbox{diam}}(E_\ell 
)\leq \delta,\end{align}
and the identity
$$
\int_{{\mathbb S}^2} {\mathbb{E}}(g\mid {\mathcal F})(x)\bigl( 
dL_N^\xi(x)-d\mu (x)\bigr)=\sum_\ell
{\frac{\int_{E_\ell}g(x)d\nu_1(x)}{\nu_1(E_\ell
)}}\int_{E_\ell}(dL_N^\xi -d\mu ),$$
so by Cauchy--Schwarz, we have
\begin{align}\int_{({\mathbb S}^2)^N}&\Bigl( \int_{{\mathbb S}^2} {\mathbb{E}}(g\mid {\mathcal F})(x)\bigl( 
dL_N^\xi(x)-d\mu (x)\bigr)\Bigr)^2d\mu^{\otimes N}\nonumber\\
&\leq \sum_\ell\nu_1(E_\ell )\Vert g\Vert_\infty^2 \sum_{\ell}
{\frac{1}{\nu_1(E_\ell
)}} \int_{({\mathbb S}^2)^N} (L_N^\xi (E_\ell )-\mu (E_\ell ))^2
d\mu^{\otimes N}\end{align}
where $L_N^\xi (E_\ell )-\mu (E_\ell )=N^{-1}
\sum_{j=1}^N( {\mathbb I}_{E_\ell}(\xi_j)-\mu (E_\ell ))$ is a sum of independent random variables with mean zero and variance 
$ N^{-1}\mu (E_\ell )(1-\mu(E_\ell ))$, so
\begin{align}\label{average}\int_{({\mathbb S}^2)^N} \Bigl( \int_{{\mathbb S}^2} {\mathbb E}(g\mid {\mathcal F})(x)\bigl( 
dL_N^\xi(x)-d\mu (x)\bigr)\Bigr)^2d\mu^{\otimes N}\leq 
{\frac{1}{N}}\Vert g\Vert_\infty^2\sum_\ell M\leq {\frac{\pi^2CM
}{\delta^2N}}.\end{align}
Choosing $\delta =N^{-1/4}$ we make both (\ref{local}) and (\ref{average}) small, which gives the stated result.\par
\end{proof}
\noindent{\bf Remark} Consider the discrete metric $\delta $ on $[0,1]$, and observe that ${\mathbb I}_A$ gives a $1$-Lipschitz function on $[0,1]$ for all open $A\subseteq [0,1]$. Then we have $\int{\mathbb I}_A(x)(d\mu (x)-d\nu (x))=\mu (A)-\nu (A)$, so by maximizing over $A$ we obtain the total variation norm $\Vert \mu -\nu\Vert_{var}$. With $\mu$ a continuous measure and $\nu$ a purely discrete measure, such as an empirical measure, we have $\Vert \mu-\nu\Vert_{var}=1$. The Propositions \ref{prop:fluctuation} and \ref{prop:fluctuationS2} depend upon the choice of cost function as well as the measures.\par

\indent The Gibbs measure (\ref{Gibbs}) was defined using random Fourier series. This construction gives us a sequence of finite-dimensional probability spaces which approximate the space $(B_K,L^2,\mu_{K,\beta })$. To make this idea precise, we recall some definitions from \cite{S}. \par

\begin{defn} (Convergence of metric measure spaces) 
\indent (i) For $M$ a nonempty set, a pseudometric is a function $\delta :M\rightarrow [0, \infty ]$ such that 
\begin{equation} \delta (x,y)=\delta (y,x), \quad \delta (x,x)=0, \quad \delta (x,z)\leq \delta (x,y)+\delta (y,z)\qquad (x,y,z\in M );\end{equation}
then $(M , \delta )$ is a pseudometric space.\par
\indent (ii) Given pseudo metric spaces $(M_1, \delta_1)$ and $(M, \delta_2)$, a coupling is a pseudo metric $\delta :M\rightarrow [0, \infty]$ where $M=M_1\sqcup M_2$ such that $\delta\mid M_1\times M_1=\delta_1$ and $\delta\mid M_2\times M_2=\delta_2$.\par
\indent (iii) Suppose that $\hat M_1=(M_1, \delta_1, \mu_1)$ and $\hat M_2=(M_2, \delta_2, \mu_2)$ are complete separable metric spaces endowed with probability measures. Consider a coupling $(M , \delta )$ and a probability measure $\pi$ on $M_1\times M_2$ with marginals $\pi_1=\mu_1$ and $\pi_2=\mu_2$. Then the $L^2$ distance between $\hat M_1$ and $\hat M_2$ is
\begin{equation} {\mathcal D}_{L^2}(\hat M_1, \hat M_2)=\inf_{\delta, \pi}\Bigl( \int_{M\times M} \delta (x,y)^2\pi (dxdy)\Bigr)^{1/2}\end{equation}
\end{defn}

\indent Let $D_n$ be the Dirichlet projection taking $\sum_{k=-\infty}^\infty  (a_k+ib_k)e^{ikx}$ to  $\sum_{k=-n}^n  (a_k+ib_k)e^{ikx}$.  Following \cite{Bo2}, we truncate the random Fourier series of $u=P+iQ=\sum_{k=-\infty}^\infty (a_k+ib_k)e^{ikx}$ to $u_n=P_n+iQ_n=\sum_{k=-n}^n (a_k+ib_k) e^{ikx}$ and correspondingly modify the Hamiltonian to
\begin{equation}H_3^{(n)}((a_k),(b_k))={\frac{1}{2}}\sum_{k=-n}^n k^2(a_k^2+b_k^2)+{\frac{\beta}{4}}\int \Bigl\vert \sum_{k=-n}^n (a_k+ib_k)e^{ik\theta}\Bigr\vert^4 {\frac{d\theta}{2\pi}}\end{equation}
for the real canonical variables $((a_k,b_k))_{k=-n}^n$. Then the canonical equations become a coupled system of ordinary differential differential equations in the Fourier coefficients. We introduce the polar decomposition $P_n+iQ_n=\kappa_n e^{i\sigma_n}$, and observe that in terms of these noncanonical variables, the Hamiltonians $H_1^{(n)}=\int_{\mathbb T} \kappa_n^2d\theta$ and 
\begin{equation}H_3^{(n)}={\frac{1}{2}}\int_{\mathbb T}\Bigl( \Bigl( {\frac{\partial \kappa_n}{\partial\theta}}\Bigr)^2 +\kappa_n^2\Bigl({\frac{\partial\sigma_n}{\partial\theta}}\Bigr)^2\Bigr) {\frac{d\theta}{2\pi}}+{\frac{\beta}{4}}\int_{\mathbb T} \kappa_n^4 {\frac{d\theta}{2\pi}}\end{equation}
are invariants under the flow. The corresponding Gibbs measure is 
\begin{equation}d\mu_{K, \beta}^{(n)} =Z(K, \beta ,n)^{-1} {\bf I}_{B_K}(u) \exp\Bigl( {\frac{-\beta}{4}}\int_{\mathbb T} \vert u(\theta )\vert^4 {\frac{d\theta}{2\pi}}\Bigr) \prod_{\theta\in [0, 2\pi ]}du(\theta )\end{equation}
in which 
\begin{equation} \prod_{\theta\in [0, 2\pi ]}du(\theta )=\prod_{j=-n; j\neq 0}^n \exp \Bigl(-{\frac{j^2}{2}}(a_j^2+b_j^2)\Bigr) {\frac{j^2 da_jdb_j}{2\pi}}.\end{equation}
Consider the map $u(x,t)\mapsto u(x+h,t)$ of translation in the space variable. This commutes with $D_n$, and the Gibbs measures $\mu_{K, \beta}^{(n)}$ are all invariant under this translation. 
In terms of Fourier components, we have $M_\infty=B_K$ and 
\begin{equation}\label{Mn}M_n=\Bigl\{ (a_j,b_j)_{j=-n}^n: a_j, b_j\in {\mathbb R}: \sum_{j=-n}^n (a_j^2+b_j^2)\leq K\Bigr\}\end{equation}
with the canonical inclusions of metric spaces $(M_1, \ell^2) \subset (M_2, \ell^2)\subset \dots \subset (M_\infty, \ell^2)$ defined by adding zeros at the start and end of the sequences, which gives a sequence of isometric embeddings for the $\ell^2$ metric on sequences. When we identify $(a_j,b_j)_{j=-n}^n$ with $\sum_{j=-n}^n (a_j+ib_j)e^{ijx}$, then we have a corresponding embedding for the $L^2$ metric.\par
\indent Here $(M_n, L^2,\mu_{K, \beta}^{(n)} )$ is a finite-dimensional manifold and a metric probability space. We now show that these spaces converge to $(M_\infty ,L^2,\mu_{K, \beta} )$ as $n\rightarrow\infty$.\par

\begin{lem}\label{Mmetricconvergence} (i) Suppose that $0<-\beta K<3/(14\pi^2)$. Then $\hat M_n =(M_n, {L^2}, 
\mu_{K, \beta}^{(n)})$ has
\begin{equation} {\mathcal D}_{L^2}(\hat M_n, \hat M_\infty )\rightarrow 0\qquad (n\rightarrow\infty ).\end{equation}
(ii) The measures $\mu_{K, \beta}^{(n)}$ converge in total variation norm to $\mu_{K, \beta}$ as $n\rightarrow\infty$.
\end{lem}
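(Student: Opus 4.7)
\smallskip
\noindent\emph{Proof plan.} For part (ii), the approach is to represent both measures as densities against the common reference measure, Wiener loop $W$ on $M_\infty$. Lifting $\mu_{K, \beta}^{(n)}$ to $M_\infty$ by sampling the Fourier modes with $\vert j\vert>n$ from $W$, the two Radon--Nikodym derivatives against $W$ agree up to replacing $V(u)=\int_{\mathbb T}\vert u\vert^4 d\theta/(2\pi)$ by $V(D_nu)$, and up to the ratio $Z/Z_n$. The smallness condition $0<-\beta K<3/(14\pi^2)$ is the Lebowitz--Rose--Speer threshold: combining the Gagliardo--Nirenberg estimate for $\Vert u\Vert_{L^4}^4$ with Fernique-type Gaussian tail bounds for the random Fourier series yields uniform-in-$n$ integrability of $e^{-\beta V/4}$ against $W\vert_{B_K}$. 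Since $D_nu\to u$ in $L^4({\mathbb T})$ for $W$-almost every $u$ (by the H\"older regularity from section 4 together with the $L^p$-boundedness of the partial Fourier sums for $1<p<\infty$), one has $V(D_nu)\to V(u)$ pointwise; Scheff\'e's lemma then converts this to $L^1(W)$-convergence of the densities, equivalently total-variation convergence of the measures. In particular $Z_n\to Z$.

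For part (i), use the triangle inequality for $\mathcal D_{L^2}$, splitting through the pushforward $(D_n)_*\mu_{K, \beta}$. The deterministic coupling $u\mapsto(u,D_nu)$ with $u\sim\mu_{K, \beta}$ yields
\begin{equation*}
{\mathcal D}_{L^2}\bigl(((D_n)_*\mu_{K, \beta},L^2),\hat M_\infty\bigr)^2\leq \int_{M_\infty}\Vert u-D_nu\Vert_{L^2}^2\, d\mu_{K, \beta}(u)\longrightarrow 0
\end{equation*}
by dominated convergence, with integrand bounded by the majorant $4K$. The remaining pair $(D_n)_*\mu_{K, \beta}$ and $\mu_{K, \beta}^{(n)}$ both live on the finite-dimensional subset $M_n\subset M_\infty$ of $L^2$-diameter $O(\sqrt{K})$, so a maximal coupling gives
\begin{equation*}
{\mathcal D}_{L^2}\bigl(\hat M_n,((D_n)_*\mu_{K, \beta},L^2)\bigr)^2\leq O(K)\cdot\bigl\Vert (D_n)_*\mu_{K, \beta}-\mu_{K, \beta}^{(n)}\bigr\Vert_{var}.
\end{equation*}
The total-variation distance on the right is handled by an argument parallel to part (ii) at the finite-dimensional marginal level: the density of $(D_n)_*\mu_{K, \beta}$ against the finite-dimensional Gaussian $W_n$ is the conditional expectation of $d\mu_{K, \beta}/dW$ given the low Fourier modes, while $d\mu_{K, \beta}^{(n)}/dW_n$ has the same form with $V(D_n u)$ in place of $V(u)$; uniform integrability together with $V(D_nu)\to V(u)$ furnishes the required convergence.

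The main obstacle is the focusing regime $\beta<0$: the weight $e^{-\beta V/4}$ is unbounded and naive dominated convergence fails. The smallness hypothesis $-\beta K<3/(14\pi^2)$ is precisely the LRS threshold that delivers a uniform-in-$n$ integrability bound for the density, and verifying this uniform integrability (with a quantitative rate $Z_n\to Z$) is the technical heart of the argument, underpinning the application of Scheff\'e's lemma both at the infinite-dimensional level (for part (ii)) and at the finite-dimensional marginal level (for part (i)).
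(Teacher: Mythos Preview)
Your proposal is correct and follows the same approach as the paper. For (ii) the paper's argument is marginally more direct: rather than invoking abstract uniform integrability and Scheff\'e, it uses M.~Riesz's theorem ($\int_{\mathbb T}|D_nu|^4\,d\theta\le c_4\int_{\mathbb T}|u|^4\,d\theta$) to produce the single dominating function $\exp\bigl(\lambda c_4\int_{\mathbb T}|u|^4\,d\theta\bigr)$, which is $W$-integrable on $B_K$ by the LRS estimate, so dominated convergence applies immediately; for (i) the paper simply cites Theorem~3.2 of \cite{B2} and records the key estimate $\int\|u-D_nu\|_{L^2}^2\,\mu(du)=O(1/n)$, so your triangle-inequality splitting through $(D_n)_*\mu_{K,\beta}$ together with the diameter-times-total-variation bound is essentially the content of that citation (and your second leg is in fact controlled directly by part~(ii), since pushforward by $D_n$ does not increase total variation).
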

\begin{proof} (i) 
This is proved in Theorem 3.2 of \cite{B2}; see also Example 3.8 of \cite{S}. 
Let $W_2(D_n\sharp \mu , \mu)$ be the Wasserstein transportation distance between $D_n\sharp\mu$ and free Brownian loop measure $\mu$ for the cost function $\Vert u-v\Vert^2_{L^2}$, where $D_n$ is the Dirichlet projection. The key point is 
\begin{align} W_2(D_n\sharp \mu , \mu )^2&\leq \int \Vert D_nu-u\Vert^2_{L^2} \mu (du)\nonumber\\
&={\mathbb E}\sum_{k: \vert k\vert>n}{\frac{\vert\gamma_k\vert^2}{k^2}}=O\Bigl({\frac{1}{n}}\Bigr)\qquad (n\rightarrow\infty ).\end{align}
\indent (ii)
The measures $\mu_{K, \beta}^{(n)}$ converge in total variation norm to $\mu_K$, by an observation of McKean in step 7 of \cite{M}. By M. Riesz's theorem, there exists $c_4>0$ such that $\int_{\mathbb T} \vert D_nu\vert^4d\theta\leq c_4\int_{\mathbb T}\vert u\vert^4d\theta$, and by \cite{LRS} the integral
\begin{equation} \int_{B_K}\exp \Bigl( \lambda c_4\int_{\mathbb T} \vert u(\theta )\vert^4 d\theta \Bigr) W(du)\end{equation}
is finite, so we can use the integrand as a dominating function to show
\begin{equation} \int_{B_K}\Bigl\vert \exp\Bigl( \lambda \int_{\mathbb T} \vert D_nu(\theta )\vert^4 d\theta\Bigr) - \exp\Bigl( \lambda \int_{\mathbb T} \vert u(\theta )\vert^4 d\theta\Bigr)\Bigr\vert W(du)\rightarrow 0\qquad (n\rightarrow\infty ).\end{equation}

\end{proof}

\begin{prop}\label{L2convergence} Let $(M_n\sqcup M_\infty, \delta_n )$ be a coupling of $(M_n, L^2)$ and $M_\infty ,L^2)$, and let $\varphi :(M_n\sqcup M_\infty, \delta_n )\rightarrow {\mathbb R}$ be a Lipschitz function. Then
\begin{equation}\label{weakconvergence} \int_{M_n} \varphi (u_n)\mu_{K, \beta}^{(n)}(du_n)\rightarrow\int_{M_\infty} \varphi (u) \mu_{K, \beta}(du)\qquad (n\rightarrow\infty ).\end{equation}\end{prop}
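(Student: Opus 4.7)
The plan is to interpret the statement as a direct consequence of the convergence $\mathcal{D}_{L^2}(\hat M_n, \hat M_\infty)\to 0$ established in Lemma \ref{Mmetricconvergence}(i), combined with Kantorovich--Rubinstein duality for Lipschitz test functions. The natural reading is that the coupling $\delta_n$ on $M_n\sqcup M_\infty$ is one that witnesses (or nearly achieves) the infimum defining $\mathcal{D}_{L^2}$, so that transport plans with small $L^2$-cost are available. Once such a plan is in hand, the convergence of integrals reduces to a single application of Cauchy--Schwarz.

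Concretely, I would fix $\varepsilon>0$ and select a probability measure $\pi_n$ on $M_n\times M_\infty$ with marginals $\mu_{K,\beta}^{(n)}$ and $\mu_{K,\beta}$ such that
$$\int_{M_n\times M_\infty}\delta_n(u_n,u)^2\,\pi_n(du_n\,du)\;\le\;\mathcal{D}_{L^2}(\hat M_n,\hat M_\infty)^2+\varepsilon,$$
so that by Lemma \ref{Mmetricconvergence}(i) the right-hand side is at most $2\varepsilon$ for all sufficiently large $n$. Writing $L$ for the Lipschitz constant of $\varphi$ on $(M_n\sqcup M_\infty,\delta_n)$, the correct marginals identify the difference of the two integrals with a single integral against the plan,
$$\int_{M_n}\varphi\,d\mu_{K,\beta}^{(n)}-\int_{M_\infty}\varphi\,d\mu_{K,\beta}=\int_{M_n\times M_\infty}\bigl(\varphi(u_n)-\varphi(u)\bigr)\,\pi_n(du_n\,du).$$
The Lipschitz bound $|\varphi(u_n)-\varphi(u)|\le L\,\delta_n(u_n,u)$ followed by Cauchy--Schwarz then yields
$$\Bigl|\int\varphi\,d\mu_{K,\beta}^{(n)}-\int\varphi\,d\mu_{K,\beta}\Bigr|\;\le\;L\Bigl(\int\delta_n^{\,2}\,d\pi_n\Bigr)^{1/2}\;\le\;L\sqrt{2\varepsilon},$$
and letting $\varepsilon\downarrow 0$ at the end completes the argument.

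The main obstacle is purely interpretational rather than analytic: the statement is vacuous if $\delta_n$ is chosen pathologically (for instance $\delta_n\equiv\infty$ across the two components would force $\varphi$ to be constant on each side and give no information), so one must either assume or observe that $\delta_n$ is a coupling realizing the $\mathcal{D}_{L^2}$-convergence of Lemma \ref{Mmetricconvergence}(i). An alternative route that bypasses this issue entirely is to invoke Lemma \ref{Mmetricconvergence}(ii) directly: since $M_\infty=B_K$ has finite $L^2$-diameter, any Lipschitz $\varphi$ is bounded in sup-norm, and the total variation convergence $\|\mu_{K,\beta}^{(n)}-\mu_{K,\beta}\|_{\mathrm{TV}}\to 0$ yields the integral convergence via the elementary inequality $|\int\varphi\,(d\mu_n-d\mu)|\le\|\varphi\|_\infty\|\mu_n-\mu\|_{\mathrm{TV}}$. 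Either strategy works, but the Wasserstein version is more in keeping with the metric-measure-space framework developed in this section and generalizes smoothly to the subsequent discussion of transport inequalities.
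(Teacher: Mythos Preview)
Your proposal is correct and follows essentially the same argument as the paper: write the difference of integrals as an integral against a transport plan $\pi$ on $M_n\times M_\infty$, apply the Lipschitz bound pointwise, then Cauchy--Schwarz to arrive at $\mathcal{D}_{L^2}(\hat M_n,\hat M_\infty)$, which tends to zero by Lemma~\ref{Mmetricconvergence}(i). Your extra $\varepsilon$-slack and your remark about the coupling needing to witness the $\mathcal{D}_{L^2}$-convergence are careful touches the paper leaves implicit, and your total-variation alternative via Lemma~\ref{Mmetricconvergence}(ii) is a valid second route that the paper does not take.
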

\begin{proof}
We can introduce a pseudo metric $\delta_n$ on $M_n\cup M_\infty$ that restricts to the $L^2$ metric on $M_n$ and $M_\infty$, and apply (\ref{metricconvergence}) to Lipschitz functions $\varphi:(M_n\sqcup M_\infty , \delta_n)\rightarrow {\mathbb R}$. 
\indent We can regard $M_n\times M_\infty$ as a subset of $M\times M=(M_n\sqcup M_\infty )\times (M_n\sqcup M_\infty)$. Note that for a Lipschitz function $\varphi :M\rightarrow {\mathbb R}$ such that $\vert \varphi (x)-\varphi (y)\vert\leq \delta (x,y)$ for all $x,y\in M$, we have
\begin{align}\label{metricconvergence}\int_{M_n} \varphi (u_n) \mu_{K, \beta}^{(n)}(du_n)-\int_{M_\infty }\varphi (u)\mu_{K, \beta}(du)&
=\int\!\!\!\int_{M_n\times M_\infty } (\varphi (u_n)-\varphi (u))\pi (dudu_n)\nonumber\\
&\leq \int\!\!\!\int_{M_n\times M_\infty} \delta (u_n,u)\pi (du_ndu)\nonumber\\
&\leq \Bigl(\int\!\!\!\int_{M_n\times M_\infty} \delta (u_n,u)^2\pi(du_ndu)\Bigr)^{1/2}\nonumber\\
&={\mathcal D}_{L^2} (\hat M_n, \hat M_\infty ).\end{align}
\end{proof}

For example, with $u=\sum_{n=-\infty}^\infty (a_k+ib_k)e^{ik\theta }$ we introduce $D_nu=\sum_{k=-n}^n (a_k+ib_k)e^{ik\theta}$; then $\varphi (u)=\Vert D_nu\Vert_{L^2}$ and $\psi (u)=\Vert u-D_nu\Vert_{L^2}$ give Lipschitz functions  $\varphi, \psi :(B_K, L^2)\rightarrow {\mathbb R}$.\par

\begin{prop} For $0<\gamma <1/16$ and fixed $0<t<t_0$, the map $x\mapsto u(x,t)\in L^4$ is $\gamma$- H\"older continuous.\end{prop}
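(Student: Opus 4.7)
The plan is to estimate the $L^4(\mu_{K,\beta})$-norm of the spatial increment $u(x+h,t)-u(x,t)$, combining Bourgain's invariance of the Gibbs measure under the NLS flow with Gaussian moment estimates for random Fourier series. By Bourgain's invariance \cite{Bo}, for any fixed $t\in(0,t_0)$ the law of $u(\cdot,t)$ under $\mu_{K,\beta}$ equals $\mu_{K,\beta}$; this reduces the question to bounding $\int|u(x+h)-u(x)|^4\,\mu_{K,\beta}(du)$ for $u$ sampled via the random Fourier series (\ref{loop}).

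For the free Wiener loop I would use orthogonality and Gaussian moment identities:
\[
\int|u(x+h)-u(x)|^2\,W(du)=\sum_{j\neq 0}\frac{|e^{ijh}-1|^2}{j^2}\leq C|h|,
\]
upgraded to $\int|u(x+h)-u(x)|^{2p}\,W(du)\leq C_p|h|^p$ by Gaussianity for every $p\geq 1$. To pass from $W$ to $\mu_{K,\beta}$, I would apply H\"older's inequality against the Radon--Nikodym density $Z_K(\beta)^{-1}\mathbb{I}_{B_K}(u)\exp(-\beta V(u)/4)$, invoking the Lebowitz--Rose--Speer integrability \cite{LRS} to bound this density in $L^r(W)$ for an admissible range of $r>1$. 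Optimising the H\"older-conjugate exponent against the Gaussian moment order $p$ produces an estimate of the form
\[
\int|u(x+h,t)-u(x,t)|^4\,\mu_{K,\beta}(du)\leq C|h|^{4\gamma}
\]
valid for every $\gamma<1/16$.

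The main obstacle is the transfer step in the focusing case $\beta<0$: the exponential weight $\exp(|\beta|V(u)/4)$ lies in $L^r(W|_{B_K})$ only for a bounded range of $r$, and this restriction is precisely what converts the Gaussian exponent close to $1/2$ into the smaller value $1/16$ in the final bound. A secondary technical point will be ensuring that the estimate is uniform in the finite-dimensional truncations $u_n$ from (\ref{Mn}), so that the passage from the truncated Hamiltonian flow to the full NLS flow is justified through Proposition~\ref{L2convergence}; here the Wasserstein convergence $\mathcal{D}_{L^2}(\hat M_n,\hat M_\infty)\to 0$ supplies the required control over Lipschitz test functions of $u$.
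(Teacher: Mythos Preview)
Your interpretation of the statement differs from the paper's. The paper reads ``$x\mapsto u(x,t)\in L^4$'' as the almost-sure pathwise assertion that $h\mapsto u(\cdot+h,t)$ is $\gamma$-H\"older with values in $L^4_x$, proved via the Kolmogorov--Censov criterion from the moment bound
\[
\int_{B_K}\bigl\Vert u(\cdot+h,t)-u(\cdot,t)\bigr\Vert^{16}_{L^4_x}\,\mu_{K,\beta}(du)\leq Ch^{2}.
\]
You instead aim at the deterministic statement that $\theta\mapsto u(\theta,t)$ is H\"older as a map into $L^4(\mu_{K,\beta})$, i.e.\ a fourth-moment bound at a single spatial point. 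That is a genuinely weaker statement, and your bound $\int|u(x+h,t)-u(x,t)|^4\,d\mu\leq C|h|^{4\gamma}$ with $4\gamma<1/4$ does not feed into Kolmogorov--Censov (the right-hand exponent must exceed $1$). So as written the proposal does not prove the proposition.

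Your explanation of the exponent $1/16$ is also off. In the focussing case the density $d\mu_{K,\beta}/dW$ lies in $L^r(W|_{B_K})$ for \emph{every} $r\geq1$: by \cite{LRS} one has $\int_{B_K}\exp(\lambda\int|u|^4)\,W(du)<\infty$ for all $\lambda>0$, so there is no bounded range of $r$ constraining the H\"older step. The $1/16$ in the paper arises differently: they embed $L^4_x\hookleftarrow H^{3/8}_x$ via Young's inequality with the kernel $J_{3/8}$, use Bourgain's Lipschitz bound for the NLS flow in $H^{3/8}$ to pass from time $t$ to time $0$, and then the Gaussian second moment in $H^{3/8}$ is only $\sum_k k^{3/4}(1-\cos hk)/k^2\leq Ch^{1/4}$. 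Raising to the $8$th power and applying Cauchy--Schwarz against $(d\mu/dW)^2$ gives the $16$th moment bounded by $Ch^2$, whence $(2-1)/16=1/16$.

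Your idea of using invariance of $\mu_{K,\beta}$ under the flow in place of Bourgain's Lipschitz estimate is a legitimate alternative for the reduction to $t=0$, and combined with Gaussian hypercontractivity for the $L^4_x$-norm it would in fact yield a sharper exponent than $1/16$. But to reach the paper's conclusion you must take high moments of the $L^4_x$-norm of the translated increment and then invoke Kolmogorov--Censov; the pointwise fourth moment alone is insufficient.
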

\begin{proof} We prove that for $0<t<t_0$, we have $C=C(t_0)$ such that
\begin{equation}\label{CK}\int_{B_K} \bigl\Vert u(x+h,t)-u(x,t)\bigr\Vert^{16}_{L^4_x}\mu_{K,\beta} (du)\leq Ch^2\end{equation}
so $x\mapsto u(x,t)\in L^4_x$ is $\gamma$- H\"older continuous for $0<\gamma <1/16$ by the Censov--Kolmogorov's theorem. To obtain (\ref{CK}), let $J_{3/8}(x)=\sum' e^{ikx}/\vert k\vert^{3/8}$ so that $J_{3/8}(x)\vert x\vert^{5/8}$ is bounded on $(-\pi ,\pi )$ and $J_{3/8}\in L^{4/3}(-\pi ,\pi)$. Then by Young's inequality for convolutions, with $\vert D\vert : e^{inx}\mapsto \vert n\vert e^{inx}$ we have 
\begin{equation}\bigl\Vert u(x+h,t)-u(x,t)\bigr\Vert_{L^4_x}\leq \bigl\Vert J_{3/8}\Vert_{L^{4/3}}\bigl\Vert \vert D\vert^{3/8} u(x+h,t)-\vert D\vert^{3/8}u(x,t)\bigr\Vert_{L^2_x}\end{equation}
Then by Bourgain's estimate on the solutions of NLS \cite{B}, there exists $C(t_0)$
such that
\begin{equation} \bigl\Vert u(x+h,t)-u(x,t)\bigr\Vert_{H^{3/8}_x}\leq C(t_0)\bigl\Vert u(x+h,t)-u(x,t)\bigr\Vert_{H^{3/8}_x}\end{equation}
\noindent where
\begin{align} \int_{B_K}& \bigl\Vert u(x+h,t)-u(x,t)\bigr\Vert^{16}_{H^{3/8}_x}\mu_{K,  \beta} (du)\nonumber\\
&\leq \Bigl( \int_{B_K} \bigl\Vert u(x+h,t)-u(x,t)\bigr\Vert^{32}_{H^{3/8}_x}W(du)\Bigr)^{1/2}\Bigl( \int_{B_K} \Bigl( {\frac{d\mu_{K, \beta}}{dW}}\Bigr)^2 W(du)\Bigr)^{1/2}.\end{align}
By basic results about Gaussian series, the first factor on the right-hand side is bounded by the eighth power of 
\begin{equation} \sum_{k=1}^\infty {\frac{k^{3/4}(1-\cos hk)}{k^2}}\leq Ch^{1/4},\end{equation}
so we obtain (\ref{CK}). Also, by rotation invariance of the Gibbs measure, we have 
$$\int_{B_K} \bigl\vert u(\theta +h,t)-u(\theta ,t)\bigr\vert^{4}\mu_{K,\beta} (du)=
\int_{B_K} \bigl\Vert u(x+h,t)-u(x,t)\bigr\Vert^{4}_{L^4_x}\mu_{K,\beta} (du),$$
which is 
\begin{align*}{}&\leq  \Bigl( \int_{B_K} \bigl\Vert u(x+h,0)-u(x,0)\bigr\Vert_{L^4_x}^{8}W(du)\Bigr)^{1/2}\Bigl( \int_{B_K} \Bigl( {\frac{d\mu}{dW}}\Bigr)^2 W(du)\Bigr)^{1/2}\\
&\leq C\Bigl( \sum_{k=1}^\infty {\frac{1-\cos hk}{k^2}}\Bigr)^2\leq Ch^2,\end{align*}
so $x\mapsto u(x,t)$ is $1/4$-H\"older continuous along solutions in the support of the Gibbs measure. 
\end{proof}

\section{Hasimoto transform}

We recall the Hasimoto \cite{H} transform, which associates with a solution $u\in C^2$ of (\ref{NLS})
a space curve in ${\mathbb R}^3$ with moving frame $\{ T,N,B\}$;
Hasimoto considered the case $\beta=-1/2$. In the present context, $u$ is associated with the space derivative of a tangent vector $T$ to a unit speed space curve, so the curvature is $\kappa =\Vert {\frac{\partial T}{\partial x}}\Vert$.
We have a polar decomposition $u=\kappa e^{i\sigma}$ where $\sigma (x,t)=\int_0^x \tau (y,t)dy$ and $\tau$ is the torsion. Then the Serret--Frenet formula is
\begin{equation}\label{SerretFrenet}{\frac{\partial}{\partial x}} \begin{bmatrix} T\\ N\\ B\end{bmatrix} =\begin{bmatrix} 0&\kappa &0\\ -\kappa&0&\tau\\ 0&-\tau &0\end{bmatrix}\begin{bmatrix} T\\ N\\ B\end{bmatrix},\end{equation}
\noindent so the frame develops along the space curve. Let $X=[T;N;B]\in SO(3)$, and $\Omega_1(x,t)$ the matrix in (\ref{SerretFrenet}). The space $${\mathcal L}(SO(3))=\{ g: [0, 2\pi ]\rightarrow SO(3); g\quad {\hbox{continuous}},\quad g(0)=g(2\pi)\}$$ with pointwise multiplication is a loop group, and its Lie algebra may be regarded as 
\begin{align*}H_0^1(so(3))=\Bigl\{ h;[0, 2\pi ]\rightarrow  so(3);& h\quad {\hbox{absolutely continuous}},\quad h(0)=h(2\pi )=0,\\
&\quad \int_0^{2\pi} \Vert h'(x)\Vert^2_{so(3)} dx<\infty\Bigr\};\end{align*}
see \cite{DL}. When $\Omega_1 (\cdot , t) \in C({\mathbb T}; so(3))$, the solution $X(\cdot , t)\in C([0, 2\pi ]; SO(3))$ to ({\ref{SerretFrenet}}) is $2\pi$ periodic up to a multiplicative monodromy factor $U(t)\in SO(3)$ such that $X(x+2\pi ,t)=X(x,t)U(t)$.\par
\indent The frame also evolves with respect to time, so that with $\mu =-{\frac{\partial\sigma}{\partial t}}-\beta\kappa^2$, we have 
\begin{equation}\label{Lax}{\frac{\partial}{\partial t}} \begin{bmatrix} T\\ N\\ B\end{bmatrix} =\begin{bmatrix} 0&-\tau\kappa &{\frac{\partial \kappa}{\partial x}}\\ \tau\kappa&0&-\mu\\ -{\frac{\partial \kappa}{\partial x}}&\mu &0\end{bmatrix}\begin{bmatrix} T\\ N\\ B\end{bmatrix}.\end{equation}

\begin{lem} (Hasimoto) If $u$ is a $C^2$ function that satisfies the nonlinear Schr\"odinger equation, then the coupled pair of differential equations is consistent, giving a Lax pair.\end{lem}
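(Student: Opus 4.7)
The plan is to interpret both (\ref{SerretFrenet}) and (\ref{Lax}) jointly as the linear system $\partial_x X=\Omega_1 X$, $\partial_t X=\Omega_2 X$ for the moving frame $X=[T;N;B]\in SO(3)$, with coefficient matrices $\Omega_1,\Omega_2\in so(3)$, and to verify the zero-curvature (Lax) compatibility identity
\[
\partial_t \Omega_1-\partial_x \Omega_2+[\Omega_1,\Omega_2]=0.
\]
This is equivalent to the commutativity of mixed partials $\partial_t\partial_x X=\partial_x\partial_t X$ (via the product rule applied to $\Omega_1 X$ and $\Omega_2 X$), which is the standard necessary and sufficient condition for the two evolutions to admit a common $X$, i.e.\ for (\ref{SerretFrenet})--(\ref{Lax}) to constitute a genuine Lax pair.

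Because $\Omega_1,\Omega_2\in so(3)$, the identity above reduces to three scalar equations, one for each above-diagonal entry $(1,2)$, $(1,3)$, $(2,3)$. In parallel I rewrite (\ref{NLS}) with $\gamma=2$ in polar form: substituting $u=\kappa e^{i\sigma}$ and separating real and imaginary parts gives, with $\tau=\partial_x\sigma$,
\begin{align*}
\partial_t\kappa&=-2\tau\,\partial_x\kappa-\kappa\,\partial_x\tau,\\
\partial_t\sigma&=\kappa^{-1}\partial_x^2\kappa-\tau^2-\beta\kappa^2.
\end{align*}
Using the definition $\mu=-\partial_t\sigma-\beta\kappa^2$, the second line becomes the algebraic identity $\mu=\tau^2-\kappa^{-1}\partial_x^2\kappa$.

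Then I would expand each of the three scalar components of the zero-curvature condition by direct matrix multiplication. The expected outcome is that the $(1,2)$ entry recovers the imaginary-part equation for $\partial_t\kappa$; the $(1,3)$ entry recovers the algebraic relation $\mu=\tau^2-\kappa^{-1}\partial_x^2\kappa$ (i.e.\ the real part of NLS re-expressed via $\mu$); and the $(2,3)$ entry, after substituting this expression for $\mu$ and using $\tau=\partial_x\sigma$, reduces to the $x$-derivative of the $(1,2)$ relation and so holds automatically.

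The main obstacle is bookkeeping rather than anything conceptual: both $\Omega_1$ and $\Omega_2$ have all three above-diagonal entries nonzero with mixed dependence on $\kappa,\tau,\mu,\partial_x\kappa$, so the six cross-terms in each component of $[\Omega_1,\Omega_2]$ must be signed carefully, and the interdependence of the three resulting scalar equations (with the $(1,3)$ equation acting as a definition of $\mu$ that is then consumed by the $(2,3)$ equation) must be untangled. Once this algebra is carried out, Hasimoto's identification falls out as a direct consequence of NLS written in polar coordinates.
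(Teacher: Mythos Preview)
The paper does not actually supply a proof of this lemma; it is simply stated with attribution to Hasimoto and then used. Your plan to verify the zero-curvature identity $\partial_t\Omega_1-\partial_x\Omega_2+[\Omega_1,\Omega_2]=0$ entry by entry is exactly the standard argument and is what a proof would consist of.

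Your reading of the $(1,2)$ and $(1,3)$ entries is correct: the $(1,2)$ entry gives $\kappa_t+2\tau\kappa_x+\kappa\tau_x=0$ (the imaginary part of NLS in polar form), and the $(1,3)$ entry gives $\mu=\tau^2-\kappa^{-1}\kappa_{xx}$, which is the real part once the paper's definition $\mu=-\sigma_t-\beta\kappa^2$ is invoked.

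Your description of the $(2,3)$ entry, however, is wrong. That entry reads $\tau_t+\mu_x-\kappa\kappa_x=0$, and it does \emph{not} reduce to the $x$-derivative of the $(1,2)$ relation: the latter is an equation for $\kappa_{tx}$, not for $\tau_t$. What actually happens is that from $\tau=\sigma_x$ and $\mu=-\sigma_t-\beta\kappa^2$ one gets
\[
\tau_t+\mu_x=\sigma_{xt}-\sigma_{tx}-2\beta\kappa\kappa_x=-2\beta\kappa\kappa_x
\]
for $C^2$ solutions, so the $(2,3)$ condition collapses to $-(2\beta+1)\kappa\kappa_x=0$. This forces $\beta=-\tfrac12$, which is precisely the case the paper flags just before the lemma (``Hasimoto considered the case $\beta=-1/2$''). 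For general $\beta$ the pair (\ref{SerretFrenet})--(\ref{Lax}) as written is \emph{not} compatible; the defocusing equation needs the hyperbolic variant the paper alludes to later. Your write-up should make the restriction $\beta=-\tfrac12$ explicit and replace the incorrect ``$x$-derivative of $(1,2)$'' mechanism by the mixed-partial computation above.
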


Thus the frame $X\in SO(3)$ evolves along the solution $P+iQ\in B_K$ of NLS, and we can regard $d/dx-\Omega_1$ and $d/dt-\Omega_2$ as connections for this evolution. Both of the coefficient matrices are real and skew symmetric. One can check that a solution of the integral equation
\begin{equation}X(x,t)=X_0(x)+t\Omega_2(0,0)X_0(0)+\int_0^x\int_0^t \Bigl( {\frac{\partial\Omega_1 (y,s)}{\partial t}}+\Omega_1(y,s)\Omega_2(y,s)\Bigr)X(y,s)dsdy\end{equation}
satisfies
$$X(x,0)=X_0(x),\qquad  {\frac{\partial X(x,0)}{\partial t}}=\Omega_2(x,0)X_0(x),$$
$${\frac{\partial^2X(x,t)}{\partial x\partial t}}= \Bigl( {\frac{\partial\Omega_1 (x,t)}{\partial t}}+\Omega_1(x,t)\Omega_2(x,t)\Bigr)X(x,t),$$
so smooth solutions are given in terms of an integral equation.\par
\indent 
From the Serret--Frenet formulas the components of the acceleration along the space curve satisfy
\begin{align}\label{acceleration}\Bigl\Vert T\times {\frac{\partial^2T}{\partial x^2}}\Bigr\Vert^2&=\Bigl({\frac{\partial \kappa}{\partial x}}\Bigr)^2 +\kappa^2\tau^2=  \Bigl( {\frac{\partial Q}{\partial x}}\Bigr)^2+\Bigl( {\frac{\partial P}{\partial x}}\Bigr)^2,\nonumber\\
\Bigl( T\cdot {\frac{\partial^2T}{\partial x^2}}\Bigr)^2&=\kappa^4= \bigl( P^2+Q^2\bigr)^2.\end{align}
\indent
The total curvature of the space curve is
\begin{equation}\label{curvature}\int_{\mathbb T} \kappa (x)^2 dx=\int_{\mathbb T}(P^2+Q^2)dx=H_1(P,Q),\end{equation}
\noindent which is an invariant under the flow associated with the NLS.\par
\begin{prop} Let
\begin{equation}\label{area}H_2(P,Q)=-\int_{\mathbb T} P(x )Q'(x )dx.\end{equation}
\indent (i) Then $-H_2$ is convergent almost surely and is invariant under the flow associated with $NLS$,\par
\indent (ii) $-H_2$ represents the area that is enclosed by the contour $\{ u(x): x\in [0, 2\pi ]\}$ in the complex plane, and 
\begin{equation}\qquad \label{meantorsion}H_2={\frac{1}{2}}\int_{\mathbb T} \kappa^2\tau dx;\end{equation}
\indent (iii) $H_2^2\leq 4^{-1}H_1H_3$ for $\beta>0.$
\end{prop}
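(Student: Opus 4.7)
For part (i), almost sure convergence of $-H_2=\int_{\mathbb T}PQ'\,dx$ I would obtain from the random Fourier representation (\ref{loop}): passing to Fourier modes and applying Plancherel turns $H_2$ into a random series whose $j$-th term is centred, of modulus $O(|j|^{-1})$, with summable variance $O(j^{-2})$, so Kolmogorov's three-series theorem gives a.s.\ convergence under $W$, and hence under $\mu_{K,\beta}$, which is absolutely continuous with respect to $W$ on $B_K$. For invariance, rewrite $H_2=-\tfrac12\Im\int_{\mathbb T}\bar u\, u_x\,dx$ (from $\bar u u_x - u\bar u_x = 2i(PQ_x-QP_x)$ combined with an integration by parts); this is the classical momentum functional for NLS. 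Then $\partial_t H_2$ is computed by substituting the real and imaginary parts of $iu_t=-u_{xx}+\beta|u|^{2(\gamma-1)}u$ into $\int_{\mathbb T}(P_tQ_x - P_xQ_t)\,dx$; the dispersive contribution integrates to an exact $x$-derivative, and the nonlinear contribution collapses to $\tfrac{\beta}{2\gamma}\int_{\mathbb T}((P^2+Q^2)^\gamma)_x\,dx=0$ by periodicity.

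For part (ii), the closed curve $x\mapsto(P(x),Q(x))$ in $\mathbb R^2$ bounds signed area $\tfrac12\oint(P\,dQ-Q\,dP)=\int_{\mathbb T}PQ'\,dx=-H_2$ by Green's theorem. To obtain (\ref{meantorsion}), substitute $P=\kappa\cos\sigma$, $Q=\kappa\sin\sigma$ and expand pointwise: the cross terms $\kappa\kappa'\sin\sigma\cos\sigma$ cancel and one finds $PQ'-P'Q=\kappa^2\sigma'=\kappa^2\tau$. Periodicity and integration by parts then give $2\int_{\mathbb T}PQ'\,dx=\int_{\mathbb T}(PQ'-P'Q)\,dx=\int_{\mathbb T}\kappa^2\tau\,dx$, which is (\ref{meantorsion}) up to the sign convention.

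For part (iii), I start from the polar identity supplied by (ii) and apply Cauchy--Schwarz with the split $\kappa^2\tau=\kappa\cdot(\kappa\tau)$:
\begin{equation*}
\Bigl(\int_{\mathbb T}\kappa^2\tau\,dx\Bigr)^2\leq \Bigl(\int_{\mathbb T}\kappa^2\,dx\Bigr)\Bigl(\int_{\mathbb T}\kappa^2\tau^2\,dx\Bigr)=H_1\int_{\mathbb T}\kappa^2\tau^2\,dx.
\end{equation*}
Pointwise $|u_x|^2=\kappa_x^2+\kappa^2\tau^2$, so $\int_{\mathbb T}\kappa^2\tau^2\,dx\leq\int_{\mathbb T}(P_x^2+Q_x^2)\,dx\leq 2H_3$, where the last step uses $\beta>0$ to discard the nonnegative potential term from $H_3$. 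Combined with $4H_2^2=(\int_{\mathbb T}\kappa^2\tau\,dx)^2$ from (ii), this produces the bound (any remaining factor can be absorbed by optimising the weight in Cauchy--Schwarz, or by exploiting the positivity of the dropped $\int\kappa_x^2\,dx+\tfrac{\beta}{\gamma}\int\kappa^{2\gamma}\,dx$).

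The hardest step is expected to be invariance in (i): although conservation of momentum for NLS is classical for smooth solutions, typical configurations in the support of $\mu_{K,\beta}$ are only H\"older continuous by the regularity result proved earlier in the paper, so the formal identity $\partial_t H_2=0$ must be justified either through the Galerkin truncations of Lemma~\ref{Mmetricconvergence} together with the weak convergence of Proposition~\ref{L2convergence}, or via a density argument on smooth initial data using well-posedness of NLS on the support of $\mu_{K,\beta}$.
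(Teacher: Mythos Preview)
Your proposal is correct and follows essentially the same route as the paper: Fourier expansion for almost sure convergence of $H_2$, differentiation through the integral for invariance, Green's theorem plus the polar substitution $P+iQ=\kappa e^{i\sigma}$ for (ii), and the Cauchy--Schwarz split $\kappa^2\tau=\kappa\cdot(\kappa\tau)$ for (iii). The only technical variation is in (i): you invoke Kolmogorov's three-series theorem for the Fourier series $\sum_j(|z_j|^2-|z_{-j}|^2)/j$, whereas the paper observes that the partial sums form a martingale and applies the $L^p$ maximal inequality together with an explicit computation of the moment generating function, which additionally yields $L^p$-integrability of $\sup_N|\sum_{|j|\le N}\cdots|$ under $\mu_{K,\beta}$ rather than bare almost sure convergence; your remark about justifying invariance at low regularity via the truncations of Lemma~\ref{Mmetricconvergence} is apt and goes slightly beyond what the paper spells out.
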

\begin{proof} (i) The invariance of $H_2$ was noted in \cite{MV1} and can be proved by differentiating through the integral sign and using the canonical equations. We have a series 
$$\int_{\mathbb T} \bar u(\theta ,t ) {\frac{\partial u}{\partial\theta}}(\theta ,t){\frac{d\theta}{2\pi}} =\lim_{N\rightarrow\infty}\sum_{j=-N}^N \overline{\hat u(j)} ij\hat u(j)$$
which converges almost surely. This follows since
\begin{align}\label{max}\int_{B_K} \sup_N\Bigl\vert \sum_{j=-N}^N &\overline{\hat u(j)}ij\hat u(j)\Bigr\vert^p \mu_{K, \beta}(du)\nonumber\\
&\leq \Bigl(\int_{B_K} \Bigl( {\frac{d\mu_{K, \beta}}{dW}}\Bigr)^2 dW\Bigr)^{1/2}\Bigl( \int_{B_K}\sup_N\Bigl\vert \sum_{j=-N}^N \overline{\hat u(j)}ij\hat u(j)\Bigr\vert^{2p}W(du)\Bigr)^{1/2},\end{align}
where the final integral involves the series     
\begin{align}\label{mart}\lim_{N\rightarrow\infty}\sum_{j=-N}^N \overline{\hat u(j)}ij\hat u(j)=\sum_{j=1}^\infty {\frac{\vert z_j\vert^2-\vert z_{-j}\vert^2}{j}}\end{align}
which is a martingale; by Fatou's Lemma, we have
\begin{align}\int_{L^2}\exp \Bigl(\lambda \sum_{j=1}^\infty {\frac{\vert z_j\vert^2-\vert z_{-j}\vert^2}{j}}\Bigr)dW&=\prod_{j=1}^\infty \int_{L^2}\exp \Bigl({\frac{\lambda(\vert z_j\vert^2-\vert z_{-j}\vert^2)}{j}}\Bigr)dW\nonumber\\
&=\Bigl( {\frac{2\pi\lambda}{\sin 2\pi\lambda }}\Bigr)^{1/2}\qquad (-1/2<\lambda <1/2),\end{align}
so the series in (\ref{mart}) is marginally exponentially integrable.
Hence the integrals in (\ref{max}) converge by the $L^p$ martingale maximal theorem for all $1<p<\infty$.\par

\indent (ii) One can write $H_2$ in terms of $P+iQ=\kappa e^{i\sigma}$, and make a change of variables to obtain
$$\kappa ={\frac{\partial (P,Q)}{\partial (\kappa ,\sigma)}}$$
and 
\begin{align*} H_2&={\frac{1}{2}}\int_{\mathbb T}\bigl( P'Q-PQ')dx={\frac{1}{2}}\int_{\mathbb T}\kappa^2\tau dx.\end{align*}
\indent To interpret this as an area, We write $\theta\in [0,2\pi ]$ for the space variable and extend functions on $[0, 2\pi ]$ to harmonic functions on the unit disc via the Poisson kernel. Then by Green's theorem, we can express this invariant in terms of the area of the image of ${\mathbb D}$ under the map to $P+iQ$, as in 
\begin{align}-H_2=\int\!\!\!\int_{\mathbb D}{\frac{\partial (P,Q)}{\partial (x,y)}}dxdy.\end{align}
This is similar to L\'evy's stochastic area, as discussed in Example 5.1 of \cite{IW}. \par

\indent (iii) We then have
$$\Bigl(\int_{\mathbb T}\kappa^2\tau dx \Bigr)^2\leq \int_{\mathbb T} \kappa^2dx\int_{\mathbb T} \kappa^2\tau^2dx$$
which is bounded in terms of other invariants, with $H_2^2\leq 4^{-1}H_1H_3$.\par

\end{proof}
\indent (i) Bourgain interprets $H_2$ in terms of momentum (5.70) \cite{Bo2}.\par
\indent (ii) Suppose that $T\in C^2([0,a]\times [0,b]; {\mathbb S}^2)$, so that $T(x,t)$ represents the spin of the particle at $(x,t)$ and let 
\begin{equation}\label{energy}E(T)=\int_0^a \Bigl\Vert{\frac{\partial T}{\partial x}}(x,t)\Bigr\Vert^2 dx,\end{equation}
which corresponds to our \ref{curvature}. One can consider infinitesimal variations $T\mapsto T+T\times V$ and thereby compute ${\frac{\partial E}{\partial T}}$.
In the focusing case $\beta=-1$, Ding \cite{D} introduces a symplectic structure on the space of such maps such that the Hamiltonian flow is
\begin{equation}\label{Heisenberg} {\frac{\partial T}{\partial t}}=T\times{\frac{\partial^2T}{\partial x^2}}\end{equation}
which corresponds to Heisenberg's equation for the one-dimensional ferro-magnet, and gives the top entry of (\ref{Lax}). There is a a gauge equivalence between the focussing NLS and Heisenberg's ferromagnet. There is also a gauge equivalence between the defocussing NLS and a hyperbolic version of the ferromagnet in which the standard cross product is modified. We have
\begin{align}\Bigl\vert{\frac{\partial^2T}{\partial x^2}}\Bigr\vert^2 &=\Bigl({\frac{\partial \kappa}{\partial x}}\Bigr)^2 +\kappa^2\tau^2+ \kappa^4\\
&=\bigl\vert{\frac{\partial u}{\partial x}}\bigr\vert^2 +\vert u\vert^4.\end{align}
The aim of the next section is to interpret the Lax pair suitably for solutions which are typically not differentiable and for which we have a pair of stochastic differential equations with random matrix coefficients. \par

\section{Gibbs measure transported to the frames}

\indent The compact Lie group $SO(3)$ of real orthogonal matrices with determinant one is a subset of $M_{3\times 3}({\mathbb R})$, which has the scalar product $\langle X,Y\rangle =\trace( XY^\dagger)$ and associated metric $d(X,Y)=\langle X-Y,X-Y\rangle^{1/2} $ such that $\langle XU, YU\rangle =\langle X,Y\rangle$ and $d(XU,YU)=d(X,Y)$ for all $U\in SO(3)$ and $X,Y\in M_{3\times 3}({\mathbb R})$. The Lie group $SO(3)$ has tangent space at the identity element give by the skew symmetric matrices $so(3)$, so the tangent space $T_XSO(3)$ at $X\in SO(3)$ consists of $\{ \Omega X: \Omega\in so(3)\}$, where $so(3)$ is a Lie algebra for $[x,y]=xy-yx$, $x, y \in so(3)$. \par
\indent
With $M_n$ as in (\ref{Mn}), the space $C^\infty(M_n; {\mathbb R})$ is a Poisson algebra for the bracket $\{ f,g\} =\sum_{j=-n}^n {\frac{\partial (f,g)}{\partial (a_j,b_j)}}$, and the canonical equations arise with Hamiltonian $H_3^{(n)}$ on $M_n$. Let $Q$ be the ring of quaternions, and extend the Poisson bracket to $C^\infty (M_n; Q)$ via $\{ f\otimes X, g\otimes Y\}=\{ f,g\}\otimes XY$. Then $({\mathbb R}^3, \times )$ may be realised as $Q/{\mathbb R}I$ and $(so(3), [\cdot ,\cdot ])\cong ({\mathbb R}^3, \times )$; see Example 2.3 of \cite{X}.\par  
\indent 
We consider the differential equation
\begin{equation} \label{skewODE}{\frac{dX}{dt}}=\Omega (t)X; \quad X(0) =X_0\end{equation}
where $t\in [0,1]$ is the evolving time.  Let $\nu_t$ for $t\in [0,1]$ be a weakly continuous family of probability measures on $SO(3)$, such that
\begin{equation}\label{squareintegrability}\int_0^1 \int_{SO(3)}\Vert \Omega \Vert^2_{so(3)}\nu_t(dX)dt<\infty \end{equation}
converges and $\Omega X$ is locally bounded so $X\mapsto \Omega X$ is locally Lipschitz, and such that the family $(\nu_t)$ satisfies the weak continuity equation
\begin{equation}\label{continuity}{\frac{\partial \nu_t}{\partial t}}+\nabla \cdot \Bigl(\Omega X\nu_t\Bigr)=0.\end{equation}
 The weak continuity equation is equivalent to 
\begin{equation} \int_{SO(3)} f(X)\nu_t(dX) =\int_{SO(3)} f(X_t(X_0))\nu_0(dX_0)\end{equation}
for all $f\in C(SO(3); {\mathbb R}),$ where $X_0\mapsto X_t(X_0)$ gives the dependence of the solution of (\ref{skewODE}) on the initial condition.  It is not asserted that the velocity field $\Omega X$ gives an optimal transportation plan, but it does an  upper bound on the Wasserstein transportation cost for the cost $d(X,Y)^2$ on $SO(3)$ of
\begin{equation} {\frac{ W_2(\nu_{t_2}, \nu_{t_1})^2}{t_t-t_1}}\leq \int_{t_1}^{t_2} \int_{so(3)} \Vert \Omega\Vert^2_{so(3)} \nu_t(dX)dt\qquad (0<t_1<t_2<1).\end{equation}
Then by Theorem 23.9 of \cite{V2}, the path $(\nu_t)$ of probability measures is absolutely continuous, so there exists $\ell \in L^1[0,1]$ such that $W_2(\nu_{t_2}, \nu_{t_1})\leq \int_{t_1}^{t_2} \ell (t)dt$ and $1/2$-H\"older continuous, so there exists $C>0$ such that $W_2(\nu_{t_2}, \nu_{t_1})\leq C\vert t_2-t_1\vert^{1/2}$.

\begin{ex} (i) If $\Omega_t\in M_{3\times 3}({\mathbb R})$ is skew, and $X_t,Y_t$ give solutions of the differential equation
\begin{equation} {\frac{dX}{dt}}=\Omega_tX, X(0)=X_0; \quad {\frac{dY}{dt}}=\Omega_tY, Y(0)=Y_0\end{equation}
then $d(X_t,Y_t)=d(X_0,Y_0)$. We deduce that if $X_0$ is distributed according to Haar measure on $SO(3)$, then $X_t$ is also distributed according to Haar measure since the measure, the metric and solutions are all preserved via $X\mapsto XU$. \par
\indent (ii) As an alternative, we can consider $X_0$ to have first column $[0;0;1]$ and observe the evolution of the first column $T$ of $X$ under the (\ref{skewODE}) where $T$ evolves on ${\mathbb S}^2$.   
\end{ex}
\indent We now consider the case in which $\Omega$ is a $so(3)$-valued random variable over $(M_\infty, \mu_{K, \beta}, L^2)$.\par

\begin{prop}\label{GibbsODE}Suppose that $\Omega =\Omega (u( \cdot , t))$ where $u(x,t)$ is a solution of NLS and that 
\begin{equation} \int_{B_K} \Vert \Omega (u(\cdot ,0))\Vert^2_{so(3)}\mu_{K, \beta}(du)\end{equation}
\noindent converges. Then for almost all $u$ with respect to $\mu_{K,\beta}$, there exists a flow $(\nu_t(dX; u))$ of probability measures on $SO(3)$.\end{prop}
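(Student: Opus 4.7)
The plan is to combine Bourgain's invariance of $\mu_{K,\beta}$ under the NLS flow with a Carath\'eodory existence theorem for (\ref{skewODE}), and then to realise $\nu_t(\,\cdot\,;u)$ as the push-forward of a reference distribution on $SO(3)$ along the resulting flow.

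First I would transfer the integrability hypothesis from $t=0$ to all $t\in [0,1]$. Since $\mu_{K,\beta}$ is invariant under the flow associated with (\ref{canonical}), the distribution of $u(\cdot ,t)$ coincides with $\mu_{K,\beta}$ for every $t$. Fubini then yields
\begin{equation*}
\int_0^1 \int_{B_K} \Vert \Omega (u(\cdot ,t))\Vert^2_{so(3)}\, \mu_{K,\beta}(du)\, dt
= \int_{B_K} \Vert \Omega (u(\cdot ,0))\Vert^2_{so(3)}\, \mu_{K,\beta}(du) <\infty ,
\end{equation*}
so for $\mu_{K,\beta}$-almost every $u$, the path $t\mapsto \Vert \Omega (u(\cdot ,t))\Vert_{so(3)}$ lies in $L^2[0,1]$. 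This is the pathwise realisation of (\ref{squareintegrability}).

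Now fix a good $u$ and write $\Omega (t)=\Omega (u(\cdot ,t))$. Local Lipschitz dependence of $\Omega X$ on $X$ combined with the $L^2$ (hence $L^1$) control on $\Vert \Omega (t)\Vert$ places (\ref{skewODE}) in the Carath\'eodory class, so for every initial $X_0\in SO(3)$ there exists a unique absolutely continuous solution $t\mapsto X_t(X_0)$; skew-symmetry of each $\Omega (t)$ preserves the orthogonality of $X_t$ and the determinant is preserved by continuity, so $X_t(X_0)\in SO(3)$. Continuous dependence on $X_0$ follows from a Gr\"onwall argument. I would then choose a Radon probability measure $\nu_0$ on $SO(3)$ (Haar measure in the spirit of the preceding Example, or the point mass at the initial Hasimoto frame of $u$) and define
\begin{equation*}
\int_{SO(3)} f(X)\,\nu_t(dX;u) = \int_{SO(3)} f(X_t(X_0))\,\nu_0(dX_0)\qquad (f\in C(SO(3))).
\end{equation*}
Weak continuity of $t\mapsto \nu_t$ follows from dominated convergence in this display, using continuity of $X_t(X_0)$ in $t$ and compactness of $SO(3)$. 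The weak continuity equation (\ref{continuity}) is then obtained by differentiating the same display for $f\in C^1(SO(3))$, substituting ${\frac{d}{dt}}X_t(X_0)=\Omega (t)X_t(X_0)$, and using the pathwise $L^1$ bound on $\Omega$ to exchange the derivative and the integral. Combined with the Wasserstein estimate preceding the Example, Theorem 23.9 of \cite{V2} then gives the $W_2$ absolute continuity.

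The principal obstacle is ensuring that $\Omega$ is a measurable $so(3)$-valued functional at the regularity of the Gibbs support. Elements in the support of $\mu_{K,\beta}$ are only H\"older of exponent less than $1/16$ into $L^4$ by the previous section, while the Hasimoto connection $\Omega_1$ of (\ref{SerretFrenet}) involves $\kappa$, $\partial_x\kappa$ and $\tau =\partial_x\sigma$, each of which is classically ill defined. The present proposition bypasses this by taking $\int \Vert \Omega \Vert^2_{so(3)}\, d\mu_{K,\beta}<\infty$ as a hypothesis; concrete verification in the Hasimoto case will require a renormalised definition of $\Omega_1$ compatible with the random Fourier expansion (\ref{loop}) and with the finite-dimensional truncations $M_n$ of the previous section, which is why the present statement is formulated at the abstract level of transport of probability measures on $SO(3)$.
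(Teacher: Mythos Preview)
Your proposal is correct and follows essentially the same route as the paper: use invariance of $\mu_{K,\beta}$ under the NLS flow together with Fubini to convert the hypothesis at $t=0$ into the square-integrability condition (\ref{squareintegrability}) for almost every $u$, and then invoke Theorem~23.9 of \cite{V2}. Your write-up is in fact more detailed than the paper's, which does not spell out the Carath\'eodory/Gr\"onwall step or the explicit push-forward definition of $\nu_t$; your final paragraph on the regularity obstacle is accurate commentary but not needed for the proof of the proposition as stated.
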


\begin{proof} Each solution $u$ of NLS determines $\Omega$ so that the associated ODE (\ref{skewODE}) transports the initial distribution of $X_0\in SO(3)$ to a probability measure on $SO(3)$; then we average over the $u$ with respect to $\mu_K(du)$. This Gibbs measure is invariant under the NLS flow, so by Fubini's theorem
\begin{equation} \int_{B_K} \int_0^1\int_{SO(3)}\Vert \Omega (u(\cdot ,t))\Vert^2_{so(3)}\nu_t(dX)dt\mu_K(du)\end{equation}
converges. Hence the condition (\ref{squareintegrability}) is satisfied, for almost all $u$, and we can invoke Theorem 23.9 of \cite{V2}.\end{proof} 

\indent For the finite-dimensional $M_n$ of (\ref{Mn}) and solutions $u_n=\kappa_ne^{i\sigma_n}$, the modified Hasimoto differential equations are

\begin{equation}\label{SerretFrenetn}{\frac{\partial}{\partial x}}X^{(n)}(x,t) =\begin{bmatrix} 0&\kappa_n &0\\ -\kappa_n&0&\tau_n\\ 0&-\tau_n &0\end{bmatrix}X^{(n)}(x,t),\end{equation}
and

\begin{equation}\label{Laxn}{\frac{\partial}{\partial t}}X^{(n)}(x,t) =\begin{bmatrix} 0&-\tau_n\kappa_n &{\frac{\partial \kappa_n}{\partial x}}\\ \tau_n\kappa_n&0&{\frac{\partial\sigma_n}{\partial t}}-\beta\kappa_n^2\\ -{\frac{\partial \kappa_n}{\partial x}}&-{\frac{\partial\sigma_n}{\partial t}}-\beta\kappa_n^2
&0\end{bmatrix}X^{(n)}(x,t)\end{equation}
involves $\tau_n={\frac{\partial \sigma_n}{\partial x}}$ and $({\frac{\partial \kappa_n}{\partial x}})^2+\tau_n^2\kappa_n^2=({\frac{\partial P_n}{\partial x}})^2+({\frac{\partial Q_n}{\partial x}})^2$ which is continuous, so there exists a solution $X^{(n)}(x,t)\in SO(3)$. 
We can interpret the solutions as elements of a fibre bundle over $(M_n, \mu_K^{(n)}, L^2)$ with fibres that are isomorphic to $SO(3)$.

\indent Let $P+iQ=\kappa e^{i\sigma}$ be a solution of NLS and let
\begin{equation}\label{SerretFrenet1}
\Omega_1=\begin{bmatrix}0&\kappa&0\\ -\kappa &0&\tau\\ 0&-\tau   &0\end{bmatrix}.\end{equation}

\begin{prop} (i) Let $P+iQ=\kappa e^{i\sigma}$ be a solution of NLS with initial data in $P(x,0)+iQ(x,0)\in B_K\cap H^1$. Then $\Omega_1$ in (\ref{SerretFrenet1}) gives an $so(3)$-valued vector field in $L^2(\kappa^2 (x,t)dx)$.\par
(ii) Let $P+iQ=\kappa e^{i\sigma}$ be a solution of NLS with initial data 
   $P(x,0)+iQ(x,0)\in H^1\cap B_K$, and let $P_n+iQ_n=\kappa_ne^{i\sigma_n}$ be the corresponding solution of the NLS truncated in Fourier space, giving matrix $\Omega_1^{(n)}$.
Let $X^{(n)}_t(x)$ be a solution of (\ref{SerretFrenetn}) and suppose that $X^{(n)}$ converges weakly in $L^2$ to $X_t(x)$. Then $X_t$ gives a weak solution of (\ref{SerretFrenet}).\end{prop}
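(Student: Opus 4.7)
For part (i), the Frobenius norm of $\Omega_1$ as an element of $so(3)$ satisfies $\Vert \Omega_1\Vert^2_{so(3)} = 2(\kappa^2 + \tau^2)$, so
$$\int_{\mathbb T} \Vert \Omega_1\Vert^2_{so(3)}\, \kappa^2\, dx = 2\int_{\mathbb T}\kappa^4\, dx + 2\int_{\mathbb T}\kappa^2\tau^2\, dx.$$
The plan is to reduce both terms to expressions controlled by the conserved quantities $H_1$ and $H_3$. Identity (\ref{acceleration}) rewrites the second integral as $2\int_{\mathbb T}((\partial_x P)^2+(\partial_x Q)^2)\,dx$, which, by the definition of $H_3$ with $\gamma=2$, equals $4H_3-\beta\int_{\mathbb T}\kappa^4\,dx$. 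For initial data in $H^1\cap B_K$, both $H_1$ and $H_3$ are finite and invariant along the NLS flow; a Gagliardo--Nirenberg interpolation using $\Vert u\Vert_{L^2}^2\leq K$ together with a bound for $\Vert \partial_x u\Vert_{L^2}^2$ derived from $H_3$ controls $\int\kappa^4\,dx$ uniformly in $t$, yielding the required bound.

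For part (ii), I pair (\ref{SerretFrenetn}) against a smooth test function $\phi:\mathbb{T}\to\mathbb{R}$ and integrate by parts entrywise to write
$$-\int_0^{2\pi} X^{(n)}(x,t)\,\partial_x\phi(x)\,dx=\int_0^{2\pi}\Omega_1^{(n)}(x,t)X^{(n)}(x,t)\phi(x)\,dx.$$
The left-hand side converges to the corresponding expression with $X$ by the assumed weak $L^2$ convergence $X^{(n)}\rightharpoonup X$. On the right, I decompose $\Omega_1^{(n)}X^{(n)}=(\Omega_1^{(n)}-\Omega_1)X^{(n)}+\Omega_1X^{(n)}$; the first term is controlled in $L^1$ by $\Vert \Omega_1^{(n)}-\Omega_1\Vert_{L^2}\Vert X^{(n)}\Vert_{L^2}\Vert \phi\Vert_\infty$, and since each $X^{(n)}(x,t)\in SO(3)$ has pointwise-bounded entries, only strong $L^2$-convergence $\Omega_1^{(n)}\to\Omega_1$ is needed to eliminate it. The second term converges because $\Omega_1\phi\in L^2$ by part (i) combined with boundedness of $\phi$, so the weak convergence of $X^{(n)}$ applies. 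Passing to the limit in the identity above yields the weak form of (\ref{SerretFrenet}) for $X$.

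The main obstacle is establishing the strong $L^2$ convergence $\Omega_1^{(n)}\to\Omega_1$. The entry $\kappa_n=|u_n|$ converges to $\kappa=|u|$ in $L^2$ as a consequence of $u_n\to u$ in $H^1$, which follows from Dirichlet truncation of the $H^1$-initial data propagated by the well-posedness of the truncated NLS flow (via \cite{Bo} and Lemma \ref{Mmetricconvergence}). However, the entry $\tau=\partial_x\sigma$ is genuinely singular at zeros of $u$, so $\tau_n\to\tau$ in $L^2$ cannot be asserted directly. The resolution is to re-express the relevant products in terms of $\kappa\tau=\Im(\bar u\,\partial_x u)$, which is smooth: a diagonal gauge transformation of the frame equation replaces the $\tau$ entries by $\kappa\tau$ while preserving the Hasimoto frame up to a pointwise rotation in the $NB$-plane. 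The convergence $\Im(\bar u_n\,\partial_x u_n)\to \Im(\bar u\,\partial_x u)$ in $L^1$ then follows from $u_n\to u$ in $H^1$, and a density argument restricting to $\{u\neq 0\}$ (whose complement has measure zero for almost every $t$ by the conservation of $H_3$) completes the passage to the limit in the weak formulation.
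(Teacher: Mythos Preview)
Your argument for (i) matches the paper's: with $\Vert\Omega_1\Vert^2\kappa^2 = 2\kappa^4 + 2\kappa^2\tau^2$, identity (\ref{acceleration}) gives $(\partial_x\kappa)^2 + \kappa^2\tau^2 = (\partial_x P)^2 + (\partial_x Q)^2$, so $\int\kappa^2\tau^2\,dx \le \int|\partial_x u|^2\,dx$ (note: your ``rewrites'' should read ``bounds'', since the identity also contains $(\partial_x\kappa)^2$). Both pieces are then finite for $u\in H^1$, as in the paper.

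For (ii) there is a genuine gap. You claim ``$\Omega_1\phi \in L^2$ by part (i)'', but part (i) only places $\Omega_1$ in $L^2(\kappa^2\,dx)$, equivalently $\kappa\Omega_1 \in L^2(dx)$; the bare $\Omega_1$ need not lie in $L^2(dx)$ because its entry $\tau$ is singular at zeros of $u$. Hence neither the pairing $\int \Omega_1 X^{(n)}\phi\,dx$ nor the strong convergence $\Omega_1^{(n)}\to\Omega_1$ in $L^2$ is available, and your decomposition cannot be pushed through. The gauge-transformation repair is also miscast: rotating by $\sigma$ in the $NB$-plane produces the Bishop (parallel) frame with connection entries $\Re u,\Im u$, not $\kappa\tau$, and in any case it yields a weak equation for the transformed frame $GX$ rather than for $X$; undoing the gauge requires precisely the well-definedness of $\sigma$ that fails at zeros of $u$.

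The paper's device is different and avoids changing the frame: multiply through by $\kappa$. Differentiating $\kappa_n X^{(n)}$ gives
\[
\partial_x(\kappa_n X^{(n)}) = (\partial_x\kappa_n)X^{(n)} + (\kappa_n\Omega_1^{(n)})X^{(n)},
\]
and now $\kappa_n\Omega_1^{(n)}$ has entries $\kappa_n^2$ and $\kappa_n\tau_n$, both in $L^2$ by (\ref{acceleration}). Pairing against a smooth test matrix $Z$, one uses $\kappa_n\to\kappa$ in $H^1$ to get strong $L^2$ convergence of $\partial_x\kappa_n$ and of $\kappa_n\Omega_1^{(n)}$; the assumed weak convergence $X^{(n)}\rightharpoonup X$ then suffices to pass to the limit in each term, yielding the weak form of $\partial_x(\kappa X)=(\partial_x\kappa)X+\kappa\Omega_1 X$. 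The factor $\kappa$ is exactly what regularises the $\tau$ entry while keeping the original frame $X$.
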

\begin{proof}  
(i) With $\omega=\sqrt{\kappa^2+\tau^2}$, we have
$$\exp (h\Omega_1)=I+{\frac{\sin h\omega}{\omega}} \Omega_1+{\frac{1-\cos h\omega}{\omega^2}}\Omega_1^2$$
where the entries of $\Omega_1^2$ are bounded by $\kappa^2+\tau^2$, hence
\begin{equation}\label{Omega1L2}\int_{\mathbb T}\Vert \Omega_1(x,t)\Vert^2\kappa (x,t)^2dx<\infty\end{equation}
for $u\in H^1$; however, there is no reason to suppose that $\tau$ itself is integrable with respect to $dx$. \par
(ii) By (\ref{acceleration}) and (\ref{curvature}), we have $\kappa\Omega_1\in L^2_x$ for all $u\in H^1$. Moreover, 
Bourgain \cite{Bo} has shown that for initial data $P(x,0)+iQ(x,0)=\kappa (x,0)e^{i\sigma (x,0)}$ in $H^1\cap B_K$, the map 
\begin{equation} \kappa (x,0)e^{i\sigma (x,0)}\mapsto \kappa (x,t)\Omega_1(x,t)\in L^2\end{equation}
is Lipschitz continuous for $0\leq t\leq t_0$ with Lipschitz constant depending upon $t_0, K>0$. We have
\begin{align} {\frac{\Vert \kappa (x+h,t)X(x+h,t)-\kappa (x,t)X(x,t)\Vert^2}{h^2}}&\leq 2\Bigl({\frac{1}{h}}\int_x^{x+h} \bigl\vert {\frac{\partial \kappa}{\partial y}}(y,t)\bigr\vert dy\Bigr)^2\nonumber\\
&\quad+2\Bigl({\frac{1}{h}}\int_x^{x+h} \kappa (y,t)\Vert \Omega_1(y,t)\Vert dy\Bigr)^2\end{align}
where the right-hand side is integrable with respect to $x$ by the Hardy--Littlewood maximal inequality and (\ref{Omega1L2}). 
Suppose that $X^{(n)}$ is a solution of (\ref{SerretFrenet}). We take $\tau_n$ to be locally bounded. Then by applying Cauchy--Schwarz inequality to the integral
$$ X^{(n)}(x+h,t)-X^{(n)}(x,t)=\int_0^h \Omega_1^{(n)} (x+s,t)X^{(n)}(x+s,t)ds,$$
we deduce that
\begin{align} \int_{[0,2\pi]} &\Vert X^{(n)}(x+s,t)-X^{(n)}(x,t)\Vert_{M_{3\times 3}({\mathbb R})}^2\kappa_n (x,t)^2dx\nonumber\\
&\leq h\int_0^h \int_{[0, 2\pi ]} \Vert \Omega_1^{(n)}(x+s,t)\Vert^2_{M_{3\times 3}({\mathbb R})} \kappa_n (x,t)^2dxds\end{align}
where the integral is finite by (\ref{Omega1L2}).\par
Also
\begin{align*}\sum_{j=1}^N {\frac{\Vert X^{(n)}(x_{j},t)-X^{(n)}(x_{j-1},t)
\Vert^2_{M_{3\times 3}({\mathbb R})}}{x_j-x_{j-1}}}\leq\int_{x_0}^{x_N}\Vert\Omega_1^{(n)}(x,t)\Vert^2dx\end{align*}
for $0<x_1<x_2<\dots <x_N<2\pi$. 

\par
We have
\begin{equation} {\frac{\partial}{\partial x}}\bigl(\kappa_nX^{(n)}\bigr) ={\frac{\partial \kappa_n}{\partial x}}X^{(n)}+\kappa^{(n)} \Omega_1^{(n)} X^{(n)}\end{equation}
so for $Z\in C^\infty ([0, 2\pi ]; M_{3\times 3}({\mathbb R}))$ and the inner product on $M_{3\times 3}({\mathbb R})$, we have
\begin{align}\langle & \kappa_n(2\pi )X^{(n)}(2\pi ), Z(2\pi )\rangle -
\langle \kappa_n(0)X^{(n)}(0), Z(0 )\rangle-\int_0^{2\pi} \kappa_n(x)\langle X^{(n)}(x), Z(x)\rangle\, dx\nonumber\\ 
&= \int_0^{2\pi} {\frac{\partial \kappa_n}{\partial x}}\langle X^{(n)}(x), Z(x)\rangle\, dx+ \int_0^{2\pi} \langle X^{(n)}, \kappa_n (x) \Omega_1^{(n)}(x)^TZ(x)\rangle dx\end{align}
where $\kappa_n\rightarrow \kappa$ in $H^1$, so with norm convergence, we have ${\frac{\partial\kappa_n}{\partial x}}\rightarrow {\frac{\partial \kappa}{\partial x}}$ in $L^2$, and $\kappa_n \Omega^{(n)}\rightarrow \kappa\Omega_1$ as $n\rightarrow\infty$, and with weak convergence in $L^2$, we have $X^{(n)}\rightarrow X$, so 
\begin{align}\langle \kappa (2\pi )X (2\pi ), Z(2\pi )\rangle &-
\langle \kappa (0)X (0), Z(0 )\rangle-\int_0^{2\pi} \kappa (x)\langle X(x), Z(x)\rangle\, dx\nonumber\\ 
&= \int_0^{2\pi} {\frac{\partial \kappa}{\partial x}}\langle X(x), Z(x)\rangle\, dx+ \int_0^{2\pi} \langle X, \kappa (x) \Omega_1(x)^TZ(x)\rangle dx.\end{align}

\end{proof}
The simulation of this differential equation computes $X_x\in {\mathbb S}^2$ starting with $X_0=[0;0;1]$ and produces a frame $\{ X_x, \Omega_xX_x, X_x 	\times \Omega_xX_x\}$ of orthogonal vectors. Geodesics on ${\mathbb S}^2$ are the curves such that the principal normal is parallel to the position vector, namely the great circles. For a geodesic, $X_x\times \Omega_xX_x$ is perpendicular to the plane that contains the great circle.

\indent Let $P+iQ=\kappa e^{i\sigma}$ be a solution of NLS and let
\begin{equation}
\Omega_2 =\begin{bmatrix}0&-\kappa\tau&{\frac{\partial \kappa}{\partial x}}\\ \kappa\tau &0&0\\ -{\frac{\partial \kappa}{\partial x}}&0  &0\end{bmatrix}.\end{equation}
\begin{prop} (i) Let $P+iQ=\kappa e^{i\sigma}$ be a solution of NLS with initial data 
   $P(x,0)+iQ(x,0)\in B_K$.  Then $x\mapsto \int_0^x\Omega_2(y,t)dy$ gives a $so(3)$-valued stochastic of finite quadratic variation on $[0, 2\pi ]$ almost surely with respect to $\mu_K(dPdQ)$.\par 
  (ii)  Let $P+iQ=\kappa e^{i\sigma}$ be a solution of NLS with initial data
   $P(x,0)+iQ(x,0)\in H^1\cap B_K$, and let $P_n+iQ_n=\kappa_ne^{i\sigma_n}$ be the corresponding solution of the NLS truncated in Fourier space, giving matrix $\Omega_2^{(n)}$.
Let $X^{(n)}_t$ be a solution of \ref{Laxn}). Then $X_t^{(n)}$ converges in $L^2_x$ norm to $X_t$ as $n\rightarrow\infty$ where $X_t$ gives a weak solution of (\ref{Lax}).\end{prop}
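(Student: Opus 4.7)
The plan for part (i) is to observe that every nonzero entry of $\Omega_2$ arises as the real or imaginary part of the single complex quantity $e^{-i\sigma}\partial_y u$: from $u=\kappa e^{i\sigma}$ a direct computation yields
\begin{equation*}
e^{-i\sigma(y,t)}\,\partial_y u(y,t) = \partial_y\kappa(y,t) + i\kappa(y,t)\tau(y,t).
\end{equation*}
Consequently, for any partition $0=x_0<\cdots<x_N=2\pi$ of $[0,2\pi]$, each matrix increment $\int_{x_{j-1}}^{x_j}\Omega_2(y,t)\,dy$ is controlled in Frobenius norm by $|u(x_j,t)-u(x_{j-1},t)|$ up to an error coming from the oscillation of $e^{-i\sigma}$ on the subinterval, and summing squared Frobenius norms gives
\begin{equation*}
\sum_{j=1}^N\Bigl\Vert\int_{x_{j-1}}^{x_j}\Omega_2\,dy\Bigr\Vert^2 \leq C\sum_{j=1}^N|u(x_j,t)-u(x_{j-1},t)|^2 + o(1)
\end{equation*}
as the partition is refined. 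The right-hand side converges almost surely to the pathwise quadratic variation of $y\mapsto u(y,t)$, which by invariance of the Gibbs measure under NLS is equidistributed with the Brownian loop (\ref{loop}) and so has finite quadratic variation almost surely with expectation $\sum_{j\neq 0}1/j^2$.

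For part (ii), I would argue by a Gronwall-type Cauchy estimate exploiting skew symmetry of $\Omega_2^{(n)}$. Since $u_n$ is a trigonometric polynomial, $\Omega_2^{(n)}$ is smooth and bounded, so classical ODE theory gives a unique $SO(3)$-valued solution $X^{(n)}$ of (\ref{Laxn}) with $\Vert X^{(n)}(x,t)\Vert_{M_{3\times 3}}=\sqrt{3}$ conserved pointwise in $(x,t)$. Setting $Y^{(n,m)}:=X^{(n)}-X^{(m)}$, skew symmetry of $\Omega_2^{(n)}$ cancels the term $\langle\Omega_2^{(n)}Y^{(n,m)},Y^{(n,m)}\rangle$, leaving
\begin{equation*}
\frac{d}{dt}\Vert Y^{(n,m)}(\cdot,t)\Vert^2_{L^2_x} = 2\bigl\langle(\Omega_2^{(n)}-\Omega_2^{(m)})X^{(m)},\,Y^{(n,m)}\bigr\rangle_{L^2_x},
\end{equation*}
and by Cauchy--Schwarz with the uniform $L^\infty$ bound on $X^{(m)}$,
\begin{equation*}
\Vert Y^{(n,m)}(\cdot,t)\Vert_{L^2_x} \leq \sqrt{3}\int_0^t\Vert\Omega_2^{(n)}(\cdot,s)-\Omega_2^{(m)}(\cdot,s)\Vert_{L^2_x}\,ds.
\end{equation*}
The identity from (i), together with the NLS relation $\kappa(\partial_t\sigma+\beta\kappa^2)=-\Re(e^{-i\sigma}\partial_x^2u)$, gives a pointwise bound for $\Vert\Omega_2\Vert^2$ in terms of $|\partial_x u|^2$ and $|\partial_x^2 u|^2/\kappa^2$. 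For initial data in $H^1\cap B_K$, Bourgain's well-posedness yields $u_n\to u$ in $C([0,T];H^1)$ on any existence interval, which combined with control of $\kappa$ away from zero on a set of almost full $\mu_{K,\beta}$ measure gives $\Omega_2^{(n)}\to\Omega_2$ in $C([0,T];L^2_x)$, making $\{X^{(n)}\}$ Cauchy in $C([0,T];L^2_x)$ with limit $X$.

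The weak form of (\ref{Lax}) then follows from pairing $\partial_tX^{(n)}=\Omega_2^{(n)}X^{(n)}$ against a test matrix $Z\in C^\infty_c([0,T]\times[0,2\pi];M_{3\times 3}(\mathbb{R}))$, integrating over $[0,T]\times[0,2\pi]$, transferring the time derivative onto $Z$ via integration by parts, and passing to the limit using strong $L^2_{x,t}$-convergence of both $X^{(n)}$ and $\Omega_2^{(n)}$. The main obstacle is the uniform-in-time convergence of $\Omega_2^{(n)}$, especially the entry $\partial_t\sigma_n-\beta\kappa_n^2$ whose control requires propagation of regularity beyond $H^1$ and uniform lower bounds on $\kappa$; this is the deepest step and leans on Bourgain's analysis of NLS on the torus, whereas step (i) is essentially routine once the polar identity $e^{-i\sigma}\partial_xu=\partial_x\kappa+i\kappa\tau$ is recognized.
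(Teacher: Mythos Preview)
Your argument follows essentially the same route as the paper's. For part (ii) the two are virtually identical: both differentiate $\Vert X^{(m)}-X^{(n)}\Vert^2_{L^2_x}$, exploit skew-symmetry of $\Omega_2^{(n)}$ to kill the diagonal term, and close with Gronwall (the paper carries an $e^t$ factor where you integrate directly using the pointwise $SO(3)$ bound on $X^{(m)}$, but the mechanism is the same), then passes to the limit in the weak integral formulation against a test matrix $Z$.

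For part (i), your complex identity $e^{-i\sigma}\partial_y u=\partial_y\kappa+i\kappa\tau$ packages the same content as the paper's computation. The paper treats the two nonzero entries of $\Omega_2$ separately: it bounds $\sum_j|\kappa(x_{j+1})-\kappa(x_j)|^2$ by $\sum_j|u(x_{j+1})-u(x_j)|^2$ and controls this in $L^1(\mu_K)$ via Cauchy--Schwarz against $(d\mu_K/dW)^2$ and the Gaussian increment estimate; for the $\kappa\tau$ entry it recognises $\int_0^x\kappa\,d\sigma$ as the stochastic integral $\int_0^x(-Q\,dP+P\,dQ)/\sqrt{P^2+Q^2}$, a martingale transform of Wiener loop with integrand of unit modulus. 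That martingale transform is precisely the imaginary part of your $\int e^{-i\sigma}\,du$, and this formulation makes your ``oscillation of $e^{-i\sigma}$ is $o(1)$'' step rigorous without needing continuity of $\sigma$ across zeros of $u$. One further remark: the $\Omega_2$ displayed in the paper just before the proposition omits the $\mu=-\partial_t\sigma-\beta\kappa^2$ entry, so its $L^2_x$ bound sidesteps the $\partial_t\sigma$ issue that you correctly flag as the deepest point for the full matrix of (\ref{Lax}).
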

\begin{proof} (i) The essential estimate is
\begin{align}\int_{B_K} \sum_j &\vert \kappa (x_{j+1},t)-\kappa (x_j,t)\vert^2 \mu_K(du)\nonumber\\
&\leq \sum_{j}\Bigl( \int_{B_K}\vert u(x_{j+1},t)-u(x_j,t)\vert^2\mu_K(du)\Bigr)\nonumber\\
&\leq \sum_{j}\Bigl( \int_{B_K}\vert u(x_{j+1},t)-u(x_j,t)\vert^4W_K(du)\Bigr)^{1/2}\Bigl(\int_{B_K}\Bigl( {\frac{d\mu_K}{dW}}\Bigr)^2dW\Bigr)^{1/2}\nonumber\\
&\leq C\sum_{j}\Bigl( \int_{B_K}\vert u(x_{j+1},t)-u(x_j,t)\vert^2W(du)\Bigr)^{1/2}\nonumber\\
&\leq C\sum_j (x_{j+1}-x_j)\leq 2\pi C.\end{align}
Also, we can control the $\kappa\tau$ term via
$$\int_0^x \kappa d\sigma-2^{-1}\kappa^2d\sigma\wedge d\sigma=\int_0^x \kappa\nabla\sigma \cdot\begin{bmatrix} dP\\ dQ\end{bmatrix}=\int_0^x {\frac{-QdP+PdQ}{\sqrt{P^2+Q^2}}}$$
which is a bounded martingale transform of Wiener loop.\par
\indent (ii)  By (\ref{acceleration}) and (\ref{curvature}), we have $\Omega_2\in L^2_x$ for all $u\in H^1$. 
Bourgain \cite{Bo} has shown that for initial data $P(x,0)+iQ(x,0)=\kappa (x,0)e^{i\sigma (x,0)}$ in $H^1\cap B_K$, the map 
\begin{equation} \kappa (x,0)e^{i\sigma (x,0)}\mapsto \Omega_2(x,t)\in L_x^2\end{equation}
is Lipschitz continuous for $0\leq t\leq t_0$ with Lipschitz constant depending upon $t_0, K>0$. 
We have
$$\int_0^{2\pi}\Vert \Omega_2 (x)\Vert^2 dx\leq 2 \int_0^{2\pi}\Bigl( \Bigl({\frac{\partial \kappa}{\partial x}}\Bigr)^2+\kappa (x)^2\tau (x)^2 +\kappa (x)^4\Bigr) dx,$$
where the final integral is part of the Hamiltonian. With $Z\in C^\infty ({\mathbb T}; M_{3\times 3}({\mathbb R}))$, we have the integral equation for the pairing $\langle \cdot ,\cdot\rangle$ on $L^2([0, 2\pi ], M_{3\times 3}({\mathbb R}))$
\begin{equation} \langle X^{(n)}_t ,Z\rangle =\langle X_0^{(n)}, Z\rangle +\int_0^t \langle X^{(n)}_s , (\Omega^{(n)}_s)^TZ\rangle\,ds. \end{equation}

Consider the variational differential equation in $L^2([0, 2\pi ], M_{3\times 3}({\mathbb R}))$
\begin{align} {\frac{d}{dt}}( X^{(m)}(x,t)-X^{(n)}(x,t))&=\Omega_2^{(n)}(x,t)(X^{(m)}(x,t)-X^{(n)}(x,t))\nonumber\\
&\quad+(\Omega_2^{(m)}(x,t)-\Omega_2^{(n)}(x,t)) X^{(m)}(x,t)\end{align}
where $\Omega_2^{(n)}(x,t)$ and $\Omega_2^{(m)}(x,t) -\Omega_2^{(n)}(x,t)$ are skew.

We introduce a family of matrices $U^{(n)}(x;t,s)$ such that $U^{(n)}(x;t,r)U^{(n)}(x;r,s)=U^{(n)}(x;t,s)$ for $t>r>s$ and $U^{(n)}(x;t,t)=I$ such that \begin{equation}{\frac{\partial}{\partial t}}U^{(n)}(x;t,s)=\Omega_2^{(n)} (x;t)U^{(n)}(x;t,s).\end{equation} Then the variational equation has solution
\begin{align*} X^{(m)}(x,t)-X^{(n)}(x,t)&=U^{(n)}(x;t,0)( X^{(m)}(x,0)-X^{(n)}(x,0))\\
&\quad +\int_0^t U^{(n)}(x;t,r) (\Omega_2^{(m)} (x;r)-\Omega_2^{(n)}(x;r)) X^{(m)}(x,r)dr.\end{align*}
Then 
\begin{align} {\frac{d}{dt}}&\bigl\langle X^{(m)}(t)-X^{(n)}(t),X^{(m)}(t)-X^{(n)}(t)\rangle_{L^2_x} \nonumber\\
&=2\Re \bigl\langle (\Omega_2^{(m)}(t)-\Omega_2^{(n)}(t)) X^{(m)}(t),X^{(m)}(t)-X^{(n)}(t)\bigr\rangle_{L^2_x}\nonumber\\
&\leq \Vert \Omega_2^{(m)}(t)-\Omega_2^{(n)}(t)\Vert^2_{L^2_x}\Vert X^{(m)}(t)\Vert^2_{L^2_x} +\Vert X^{(m)}(t)-X^{(n)}(t)\Vert^2_{L^2_x}\end{align}
so from this differential inequality we have
\begin{equation}\label{intinequal}\Vert X^{(m)}(t)-X^{(n)}(t)\Vert_{L^2_x}^2\leq e^{t}\Vert  X^{(m)}(0)-X^{(n)}(0)\Vert_{L^2_x}^2+\int_0^t e^{t-s}\Vert \Omega_2^{(m)}(s)-\Omega_2^{(n)}(s)\Vert_{L^2_x}^2ds. \end{equation}  
Now $X^{(m)}(0)-X^{(n)}(0)\rightarrow 0$ and $\Omega_2^{(m)}(s)-\Omega_2^{(n)}(s)\rightarrow 0$ in $L_x^2$ norm as $n,m\rightarrow\infty$, so there exists $X(x,t)\in L_x^2$ such that $X(x,t)-X^{(n)}(x,t)\rightarrow 0$ in $L^2_x$ norm as $n\rightarrow\infty$.\par 
 We deduce that 
\begin{equation} \langle X(t) ,Z\rangle_{L^2_x} =\langle X_0, Z\rangle_{L^2_x} +\int_0^t \langle X_u , (\Omega_2(u))^TZ\rangle_{L^2_x}\,du,\end{equation}
so we have a weak solution of the ODE.
\end{proof}
Let $\Omega^{(n,u_n)}_2(x,t)$ be the Fourier truncated matrix that corresponds to a solution $u_n$ of the Fourier truncated equation $NLS_n$, then let $X^{n,u_n}(x,t)$ be the solution of the ODE (\ref{Laxn}). By Proposition \ref{GibbsODE}, the map $u_n\mapsto X^{n,u_n}(\cdot , t)$ pushes forward the modified Gibbs measure $\mu_K^{(n)}$ to a measure on $(C(M_n;SO(3)), L^2)$ that satisfies a Gaussian concentration of measure inequality with constant $\alpha (\beta ,K)/n^2$; compare (\ref{logsob}).
\begin{cor}
For each $Z\in L^2([0, 2\pi ]; M_{3\times 3}({\mathbb R}))$, introduce the ${\mathbb R}$-valued random variable on $(M_n, L^2, \mu_K^{(n)})$ by
\begin{equation} Z_n(u_n)=\int_{[0, 2\pi ]} \langle X^{(n,u_n)}(x,t), Z(x)\rangle dx. \end{equation}
 (i) Then the distribution $\nu^{(n)}$  of $Z_n$ satisfies the Gaussian concentration inequality
\begin{equation} \label{Zconcentration}
\int_{M_n} \exp \Bigl( tZ_n -t\int_{M_n}Z_nd\mu_K^{(n)}\Bigr) \mu_K^{(n)} (du_n)\leq \exp \bigl( n^2t^2/\alpha (\beta ,K)\bigr)\qquad (t\in {\mathbb R}).\end{equation} 
(ii) Let $\nu_N^{(n)}=N^{-1}\sum_{j=1}^N\delta_{Z_n^{(j)}}$ be the empirical distribution of $N$ independent copies of $Z_n$. Then $W_1(\nu_N^{(n)}, \nu^{(n)})\rightarrow 0$ almost surely as $N\rightarrow\infty$. 
\end{cor}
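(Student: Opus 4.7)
The overall plan is to prove (i) by showing that $u_n\mapsto Z_n(u_n)$ is Lipschitz from $(M_n, L^2)$ to $\mathbb R$ with constant $O(n\|Z\|_{L^2_x})$, then invoke the logarithmic Sobolev inequality for $\mu_K^{(n)}$ together with the Herbst argument already used in Proposition 3.2 and in the derivation of (\ref{diffineq}); and to prove (ii) by combining the mean bound from Proposition \ref{prop:fluctuation} with the Gaussian concentration inherited from (i), upgrading convergence in mean to almost sure convergence via Borel--Cantelli.

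For the Lipschitz estimate in (i), given $u_n, v_n\in M_n$ with associated solutions $X^{(n,u_n)}, X^{(n,v_n)}$ of (\ref{Laxn}), the difference $\Delta=X^{(n,u_n)}-X^{(n,v_n)}$ satisfies the variational equation
\begin{equation*}
\partial_t \Delta = \Omega_2^{(n,u_n)}\,\Delta + \bigl(\Omega_2^{(n,u_n)}-\Omega_2^{(n,v_n)}\bigr)X^{(n,v_n)},
\end{equation*}
with $\Omega_2^{(n,u_n)}$ skew; the Gronwall-type computation leading to (\ref{intinequal}) then yields
\begin{equation*}
\|\Delta(t)\|_{L^2_x}^2 \;\leq\; e^t\,\|\Delta(0)\|_{L^2_x}^2 + \int_0^t e^{t-s}\bigl\|\Omega_2^{(n,u_n)}(s)-\Omega_2^{(n,v_n)}(s)\bigr\|_{L^2_x}^2\,ds.
\end{equation*}
Bourgain's Lipschitz-in-$H^1$ estimate for NLS (already invoked in the preceding proposition), combined with the Bernstein inequality $\|\partial_x w\|_{L^2}\leq n\|w\|_{L^2}$ valid on the truncated space $M_n$, gives $\bigl\|\Omega_2^{(n,u_n)}(s)-\Omega_2^{(n,v_n)}(s)\bigr\|_{L^2_x}\leq C(t_0,K)\,n\,\|u_n-v_n\|_{L^2}$, so that $Z_n$ is Lipschitz with constant $L=C'n\|Z\|_{L^2_x}$ after applying Cauchy--Schwarz to the pairing $\langle X^{(n,u_n)}(\cdot,t),Z\rangle$. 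Feeding this $L$ into the log-Sobolev inequality for $\mu_K^{(n)}$ with constant $\alpha(\beta,K)$---which is uniform in $n$, as follows from the BBD estimate of \cite{BBD} applied to the finite-dimensional approximants and consistent with Lemma \ref{Mmetricconvergence}---and running the moment-generating-function argument of (\ref{diffineq}) produces (\ref{Zconcentration}), with the factor $n^2$ arising from $L^2$.

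For (ii), part (i) shows $\nu^{(n)}$ has sub-Gaussian tails, hence all moments finite, so the integral $\int\sqrt{F(t)(1-F(t))}\,dt$ in Proposition \ref{prop:fluctuation} converges and delivers $\mathbb E\,W_1(\nu_N^{(n)},\nu^{(n)})=O(N^{-1/2})$. The map $(z_1,\dots,z_N)\mapsto W_1(N^{-1}\sum_j\delta_{z_j},\nu^{(n)})$ is $N^{-1/2}$-Lipschitz in the $\ell^2$-metric on $\mathbb R^N$ by the transport-plan bound (\ref{Lipschitz}); product concentration on $\mathbb R^N$ inherited from the sub-Gaussianity of $\nu^{(n)}$ then concentrates $W_1(\nu_N^{(n)},\nu^{(n)})$ around its mean at rate $\exp(-cN\varepsilon^2)$, whence summability in $N$ and Borel--Cantelli yield $W_1(\nu_N^{(n)},\nu^{(n)})\to 0$ almost surely. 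The main technical obstacle is the tracking of $n$-dependence in step (i): producing the bound $\|\Omega_2^{(n,u_n)}-\Omega_2^{(n,v_n)}\|_{L^2_x}\leq C n\|u_n-v_n\|_{L^2}$ requires marrying Bourgain's $H^1$-Lipschitz estimate to the Bernstein loss from passing from $H^1$ to $L^2$ on $M_n$, and it is precisely this factor of $n$---squared upon entering the Herbst exponent---that accounts for the $n^2$ in (\ref{Zconcentration}).
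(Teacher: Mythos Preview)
Your proposal is correct and follows essentially the same route as the paper. For (i) the paper likewise derives the Gronwall bound (\ref{groninequal}), applies Bourgain's $H^1$-Lipschitz estimate together with the Bernstein loss $\|\cdot\|_{H^1_x}\le n\|\cdot\|_{L^2_x}$ on $M_n$ to obtain a $Cn$-Lipschitz map $u_n\mapsto Z_n$, and then pushes forward the log-Sobolev inequality to get (\ref{Zconcentration}); for (ii) the paper invokes Theorem~\ref{sanov} (using that the pushforward $\nu^{(n)}$ inherits $T_2$) together with Proposition~\ref{prop:fluctuation} and Borel--Cantelli, which is the same mechanism you describe, though your phrasing ``product concentration inherited from sub-Gaussianity'' should more precisely be ``$T_2$ (or LSI) for $\nu^{(n)}$, which tensorizes''---sub-Gaussian tails alone do not automatically yield dimension-free concentration for Lipschitz functions on the product.
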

\begin{proof} (i) As with $u_n$, we introduce the corresponding data for another solution $v_n$. As in (\ref{intinequal}), we have
\begin{align}\label{groninequal}\Vert X^{(n,u_n)}(x,t)-X^{(n, v_n)}(x,t)\Vert_{{\mathbb R}^3}^2&\leq e^{t}\Vert  X^{(n,u_n)}(0)-X^{(n,v_n)}(0)\Vert_{{\mathbb R}^3}^2\nonumber\\
&\quad +\int_0^t e^{t-s}\Vert \Omega_2^{(n,u_n)}(x, s)-\Omega_2^{(n,v_n)}(x,s)\Vert_{so(3)}^2ds. \end{align}
For given initial condition $X^{n,v_n)}(0)=X^{(n,u_n)}(0)$, and $T>0$, we can take the supremum over $t$, then integrate this with respect to $x$ and obtain 
\begin{align}\int_0^{2\pi}\sup_{0<t<T}\Vert X^{(n,u_n)}(x,t)&-X^{(n,v_n)}(x,t)\Vert_{{\mathbb R}^3}^2dx\nonumber\\
&\leq e^T\int_0^T 
\Vert \Omega_2^{(n,u_n)}(x,s)-\Omega_2^{(n,v_n)}(x,s)\Vert_{L^2_x}^2ds\end{align}
so $\Omega^{(u)}\mapsto X^u$ is a Lipschitz function $L^2 ([0, 2\pi ]\times [0, T], so(3))\rightarrow L^2([0,2\pi]; L^\infty ([0,T],{\mathbb R}^3))$. 
By Bourgain's results, there exists $C>0$ such that
\begin{align} \Vert \Omega_2^{(n,u_n)}(x,s)-\Omega_2^{(n,v_n)}(x,s)\Vert_{L^2_x}&\leq C\Vert u_n(x,s)-v_n(x,s)\Vert_{H_x^1}\nonumber\\
&\leq Cn\Vert u_n(x,0)-v_n(x,0)\Vert_{L^2_x}, \end{align}
so $u_n\mapsto X^{(n,u_n)}$ is a Lipschitz function on $L^2_x$, albeit with a constant growing with $n$. Thus we can push forward the modified Gibbs measure $(M_n,L^2, \mu_{K, \beta}^{(n)})\rightarrow L^2([0, 2\pi ]; M_{3\times 3}({\mathbb R}))$ so that the image measure satisfies a Gaussian concentration inequality with constant $\alpha (\beta ,K)/n^2$ dependent upon $n$.
For each $Z\in L^2([0, 2\pi ]; M_{3\times 3}({\mathbb R}))$, we introduce $Z_n$, so that
where $u_n\mapsto Z_n$ is $Cn$-Lipschitz function from $(M_n,L^2, \mu_{K, \beta}^{(n)})$ to ${\mathbb R}$. The random variable $Z_n$ therefore satisfies the Gaussian concentration inequality (\ref{Zconcentration}).\par
\indent (ii) By Theorem \ref{sanov}, we can use the Borel--Cantelli Lemma to show that  
$${\mathbb P}\Bigl[ \bigl\vert W_1(\nu_N^{(n)}, \nu^{(n)})-{\mathbb E}W_1(\nu_N^{(n)}, \nu^{(n)})\bigr\vert >\varepsilon \quad{\hbox{for infinitely many}}\quad N\Bigr]=0\qquad (\varepsilon >0),$$
where by Proposition \ref{prop:fluctuation}, ${\mathbb E}W_1(\nu_N^{(n)}, \nu^{(n)})\rightarrow 0$ as $N\rightarrow\infty$.
\end{proof}

Consider a coupling of $(M_n, L^2, \mu^{(n)}_{K, \beta})$ and $(M_\infty , L^2, \mu_{K, \beta})$ involving measure $\pi_n$. For any bounded continuous $\varphi :{\mathbb C}\rightarrow {\mathbb R}$ we can consider

\begin{align}\label{weakdiffer}\int_{M_n} \varphi (Z_n(u_n)) \mu^{(n)}_{K, \beta}(du_n)-&\int_{M_\infty }\varphi (Z(u))\mu_{K, \beta}(du)\nonumber\\
&=\int\!\!\!\int_{M_n\times M_\infty}\bigl( \varphi (Z_n(u_n))-\varphi (Z(u))\bigr)\pi_n(du_ndu)\end{align}
where
\begin{align*} {\mathcal D}_{L^2} (\hat M_n, \hat M_\infty )^2=\int\!\!\!\int_{M_n\times M_\infty} \delta (u_n, u)^2 \pi_n(du_ndu)\rightarrow 0\qquad (n\rightarrow\infty ).\end{align*}

\begin{prop} Let $(\varphi_j)_{j=1}^\infty$ be a dense sequence in $Ball (C_c({\mathbb C}; {\mathbb R}))$ and $(Y_\ell  )_{\ell =1}^\infty$ a dense sequence in $Ball (L^2)$. Then there exists a subsequence $(n_k)$ such that 
\begin{equation}\int_{M_{n_k}} \varphi_j\bigl(\langle X^{(n_k, u_{n_k})} , Y_\ell \rangle \bigr) \mu_{K, \beta}^{(n_k)} (du_{n_k})\end{equation}
converges as $n_k\rightarrow\infty$ for all $j,\ell \in {\mathbb N}$. 
\end{prop}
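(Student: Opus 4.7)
The strategy is a standard Cantor diagonal extraction, made possible by a uniform boundedness observation. Write
\[ I_n^{(j,\ell)}:=\int_{M_n} \varphi_j\bigl(\langle X^{(n,u_n)},Y_\ell\rangle\bigr)\,\mu_{K,\beta}^{(n)}(du_n). \]
The first step is to show that for each fixed pair $(j,\ell)$, the sequence $(I_n^{(j,\ell)})_{n\in\mathbb N}$ is bounded in $\mathbb R$ uniformly in $n$. Since $\varphi_j\in\mathrm{Ball}(C_c(\mathbb C;\mathbb R))$ we have $\|\varphi_j\|_\infty\leq 1$, and $\mu_{K,\beta}^{(n)}$ is a probability measure, so $|I_n^{(j,\ell)}|\leq 1$. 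To see that the integrand is measurable, one observes that by the Lipschitz estimate in the preceding corollary the map $u_n\mapsto X^{(n,u_n)}(\cdot,t)$ is continuous $(M_n,L^2)\to L^2([0,2\pi];M_{3\times 3}(\mathbb R))$; pairing with $Y_\ell\in\mathrm{Ball}(L^2)$ gives a continuous real-valued function, and composing with the continuous $\varphi_j$ yields a bounded continuous function on $M_n$. Note also that pointwise $X^{(n,u_n)}(x,t)\in SO(3)$, so $\langle X^{(n,u_n)},Y_\ell\rangle$ is actually valued in a bounded interval independent of $n$, which gives some extra robustness.

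Second, enumerate the countable index set $\mathbb N\times\mathbb N$ as a single sequence $\{(j_m,\ell_m)\}_{m=1}^\infty$. Set $(n^{(0)}_k)_{k\geq 1}=(k)_{k\geq 1}$. By Bolzano--Weierstrass applied to the bounded real sequence $(I_{n^{(0)}_k}^{(j_1,\ell_1)})_k$, we can extract a subsequence $(n^{(1)}_k)_k$ of $(n^{(0)}_k)_k$ along which $I_{n^{(1)}_k}^{(j_1,\ell_1)}$ converges. Inductively, assuming $(n^{(m-1)}_k)_k$ is defined, extract a further subsequence $(n^{(m)}_k)_k$ along which $I_{n^{(m)}_k}^{(j_m,\ell_m)}$ converges. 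Finally set $n_k:=n^{(k)}_k$; for each fixed $m$, the tail $(n_k)_{k\geq m}$ is a subsequence of $(n^{(m)}_k)_k$, so $I_{n_k}^{(j_m,\ell_m)}$ converges as $k\to\infty$. Since every pair $(j,\ell)\in\mathbb N^2$ equals $(j_m,\ell_m)$ for some $m$, the diagonal sequence $(n_k)$ works simultaneously for all $(j,\ell)$.

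There is no genuine obstacle in this argument; the substance of the proposition is that the integrals are well-defined real numbers bounded by $1$, after which the conclusion is purely a soft compactness fact. One should perhaps remark that combined with the Helly-type ideas behind the preceding coupling estimate \eqref{weakdiffer} and Proposition \ref{L2convergence}, the limit of $I_{n_k}^{(j,\ell)}$ is expected to agree with the natural integral over $(M_\infty,L^2,\mu_{K,\beta})$, but this identification of the limit is not part of the claim as stated and would require the additional ingredient that $u\mapsto \varphi_j(\langle X^{(u)},Y_\ell\rangle)$ descends from a Lipschitz function on a coupling, using the weak $L^2_x$ convergence $X^{(n,u_n)}\to X^{(u)}$ established earlier.
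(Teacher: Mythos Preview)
Your argument is correct: since each $\varphi_j$ has $\|\varphi_j\|_\infty\le 1$ and each $\mu_{K,\beta}^{(n)}$ is a probability measure, the numbers $I_n^{(j,\ell)}$ lie in $[-1,1]$, and a Cantor diagonal extraction over the countable family $(j,\ell)$ gives the stated subsequence. This is an entirely legitimate proof of the proposition as stated.

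The paper proceeds differently, and it is worth noting what its extra machinery buys. Rather than applying Bolzano--Weierstrass to the scalar sequences, the paper embeds all of the spaces $M_n$ and $M_\infty$ into a single Polish space $M$, introduces a reference probability $\omega=2^{-1}\mu_\infty+\sum_n 2^{-n-1}\mu_n$, writes $d\mu_n=f_n\,d\omega$, and uses the total-variation convergence $\mu_{K,\beta}^{(n)}\to\mu_{K,\beta}$ (Lemma~\ref{Mmetricconvergence}(ii)) to get $f_n\to f_\infty$ in $L^1(\omega)$. The integrands $g_n(u)=\varphi_j(\langle X^{(n,u_n)},Y_\ell\rangle)$, viewed as a bounded sequence in $L^\infty(\omega)$, then have a weak-$*$ convergent subsequence $g_{n_k}\to g_\infty$, and the limit of the integrals is identified as $\int g_\infty\,d\mu_\infty$. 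So the paper's route yields not only convergence but a representation of the limit as an integral against the limiting Gibbs measure $\mu_{K,\beta}$---precisely the structural information one would want for the subsequent Remark, where the question of whether this limit coincides with $\int_{M_\infty}\varphi_j(\langle X^{(u)},Y_\ell\rangle)\,\mu_{K,\beta}(du)$ is raised. Your closing paragraph already anticipates this point; the paper's proof is organised so as to make that next step visible, at the cost of more setup than the bare statement requires.
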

\begin{proof} We can introduce a metric so that $M=\prod_{n=1}^\infty M_n\sqcup M_\infty$ becomes a complete and separable metric space, and we can transport $\mu_n$ onto $M$. Then $\omega =2^{-1}\mu_\infty+\sum_{n=1}^\infty 2^{-n-1}\mu_n$ is a probability measure on $M$, and $\mu_n$ is absolutely continuous with respect to $\omega$, so $d\mu_n=f_nd\omega$ for some probability density function $f_n\in L^1(\omega )$. 
By convergence in total variation from Lemma \ref{Mmetricconvergence} (ii), here exists $f_\infty\in L^1(\omega )$ such that $f_n\rightarrow f_\infty$ in $L^1$ as $n\rightarrow\infty$. Given a bounded sequence $(g_n)_{n=1}^\infty$ in $L^\infty (\omega )$, there exists $g_\infty\in L^\infty (\omega )$ and a subsequence $(n_k)$ such that
\begin{equation} \int g_{n_k}d\mu_{n_k}=\int g_{n_k}f_{n_k}d\omega \rightarrow \int g_\infty f_\infty d\omega =\int g_\infty d\mu_\infty.\end{equation} 
\end{proof}

\begin{rem} 
For $u\in M_\infty,$ we have $u_n=D_nu\in M_n$ so that $u_n\rightarrow u$ in $L^2$ norm as $n\rightarrow\infty$. It is plausible that (\ref{weakdiffer}) tends to $0$ as $n\rightarrow\infty$, but we do not have a proof. Unfortunately, the constants are not sharp enough to allow us to use (\ref{L2convergence}) to deduce $W_2$ convergence for the distributions on $SO(3)$.\par

\end{rem}
\section{Experimental Results}
Our objective in this section is to obtain a (random) numerical approximation to the solution of \eqref{SerretFrenetn}. 
We consider the case where the parameter $\beta$ in \eqref{NLS} is equal to $0$. Note that in this case, the Gibbs measure reduces to Wiener loop measure and stochastic processes with the Wiener loop measure as their law are by definition Brownian loop. 
 Equation \eqref{SerretFrenetn} is a PDE with respect to the space variable $x$, while the parameter of a stochastic process in an SDE is colloquially referred to as time. To avoid confusion, in this section we refer to $x$ as $s$; whereas the time variable $t$ is suppressed.

Recall the polar decomposition $P+iQ=\kappa e^{i\sigma}$ where, $\kappa=\sqrt{P^2+Q^2}$ and $\sigma$ is such that $\tau={\frac{\partial \sigma}{\partial s}}$.
Define $\sigma_\epsilon(P,Q) := \tan^{-1}(\frac{PQ}{P^2 + \epsilon^2})$ as the regularised Ito integral of $\tau$. The Ito differential  $d\sigma_{\epsilon}$ can be written as $d\sigma_{\epsilon} = f_1(P,Q) dP + f_2(P,Q)dQ + f_3(P,Q)ds$, where 
\begin{align*}
    &f_1(P,Q):= \frac{(\epsilon^2 - P^2)Q}{(\epsilon^2 + P^2)^2 + P^2Q^2}, \\ 
    &f_2(P,Q):= \frac{ P(\epsilon^2 + P^2)}{(\epsilon^2 + P^2)^2 + P^2Q^2}, \\ 
    &f_3(P,Q):=-\frac{2P^3Q(\epsilon^2 + P^2)}{((\epsilon^2 + P^2)^2 + P^2 Q^2 )^2}-\frac{2PQ\left((\epsilon^2 + P^2)^2 + P^2Q^2 \right)}{\left((\epsilon^2 + P^2)^2 + P^2Q^2 \right)^2}\\
    &\qquad\qquad\qquad-\frac{(\epsilon^2 - P^2)Q\left(2PQ^2 + 4P(\epsilon^2 + P^2) \right)}{\left((\epsilon^2 + P^2)^2 + P^2Q^2 \right)^2}
    .
\end{align*}

We can write \eqref{SerretFrenetn} in the form of a SDE, including a correction to convert from a Stranovich SDE into an Ito SDE as follows
\begin{align}
    dX_s = &\mathbf{A} X_s ds + \mathbf{B}  X_s  dP + \mathbf{C} X_s  dQ \label{equation:SDE1}
    \end{align}
    where,
    \begin{align}
& \mathbf{A} = \begin{bmatrix}
0 & \sqrt{P^2 + Q^2} & 0 \\
-\sqrt{P^2 +Q^2} &  \frac{1}{2}f_1^2(P,Q) + \frac{1}{2}f_2^2(P,Q) & f_3(P,Q) \\
0 & -f_3(P,Q) &  \frac{1}{2}f_1^2(P,Q) + \frac{1}{2}f_2^2(P,Q) 
\end{bmatrix}, \nonumber \\
&\mathbf{B} = \begin{bmatrix}
0 & 0 & 0 \\
0 & 0 & f_1(P,Q)\\
0 & -f_1(P,Q) & 0 
\end{bmatrix}, \quad
\mathbf{C} = \begin{bmatrix}
0 & 0 & 0 \\
0 & 0 & f_2(P,Q) \\
0 & -f_2(P,Q) & 0 
\end{bmatrix}.\label{equation:SDE2}
\end{align}

As justified above $P$ and $Q$ are each a Brownian bridge with period $T = 2\pi$, thus they can be expressed in terms of Brownian motions $W_1$ and $W_2$; that is, $P(s) = W_1(s) - sW_1(2\pi)/2\pi$ and likewise for $Q$. Equation \eqref{equation:SDE1} is now written as a standard Ito SDE,

\begin{align}
    dX_s = &\Bigl(\mathbf{A} + \frac{W_1(2\pi)}{2\pi} \mathbf{B} + \frac{W_2(2\pi)}{2\pi} \mathbf{C}\Bigr) X_s ds + \mathbf{B}  X_s  dW_1 + \mathbf{C} X_s  dW_2 \label{equation:SDE3}
\end{align}
where $\mathbf{A}$, $\mathbf{B}$ and $\mathbf{C}$ are defined as in Equation \eqref{equation:SDE2}. The resulting stochastic process $X_s \in SO(3)$ is then used to rotate the unit vector $y_0 = [ 0, 0, 1]^T$ on $\mathbb{S}^2$ by $y_s = X_s y_0$. The sample paths of this process can be described by construction of a frame $\lbrace y_s, y^\prime_s, y_s \times y_s^\prime \rbrace$. In order to simulate this SDE, we make use of a 
numerical scheme for matrix SDEs in $SO(3)$ developed by Marjanovic and Solo  \cite{MS}. This involves a single step geometric Euler-Maruyama method, called g-EM, in the associated Lie algebra.  Figure \ref{fig:sample_path} demonstrates a sample-path of $y_s$ generated via this method. 
\begin{figure}
    \centering
    \includegraphics[width=10cm]{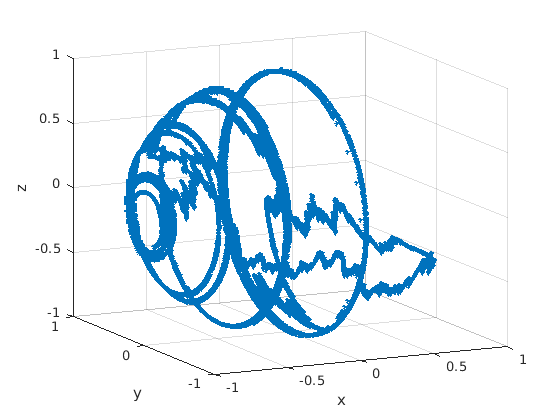}
    \caption{One sample path of the stochastic process $X_s$, over $[0,10]$ with step size of $h = 10^{-5}$.}
    \label{fig:sample_path}
\end{figure}
The sample paths start off on the great circle perpendicular to the y-axis, and so have constant binormal $y_s \times y_s^\prime$. As a sample path extends past the great circle the binormal vector at each point deviates slowly, thus a sample path can be thought of as a precessing orbit. 

The Ito process $y_s$ is derived from the solution to Equation \eqref{equation:SDE3} and takes values in $\mathbb{R}^3$. Let $\hat{y}_{s,h}$ denote the numerical approximation to $y_s$ on $[0,T]$ with step size $h$, which is calculated using the g-EM method. The approximation error converges to zero in the $L^2$ space of Ito processes as the step size $h \rightarrow 0$,
\begin{equation}
    \mathbb{E} \left[ \sup_{0 \leq s < T}  \| y_s - \hat{y}_{s, h} \|_{\mathbb{R}^3}^2 \right] = \mathcal{O}(h^{1-\varepsilon}),
\end{equation}
for some $\varepsilon > 0$ \cite{PS}. A value of $\varepsilon = 1/4$ allows us to maintain control of the implied constants on the interval $[0,1]$, and $h$ is taken to be $10^{-5}$. We apply g-EM to Equation \eqref{equation:SDE3} on the interval $[0,10]$, upon which a smaller value of $h$ would be welcomed. However, we are attempting to calculate a distribution, so we need a large number of sample-paths.\par 
\indent The computational complexity of simulating a single sample-path is $\mathcal{O}(T/h)$ where $T$ denotes the length of the interval simulated. Therefore, for a total of $N$ samples, the computational complexity of our simulation algorithm is $\mathcal{O}(NT/h)$. We run our simulations using a machine equipped with an 8-core Intel Xeon Gold 6248R CPU with a clock speed of $2993$ Mhz; we take advantage of integrated parallelisation in MATLAB. With $h = 10^{-5}$ and $N = 2 \times 10^6$ the algorithm takes around $1$ week to run on our system.

Since the sample paths are constrained to $\mathbb{S}^2$ the points $y_s$ can be specified in spherical coordinates of longitude $\theta_s \in [-\pi,\pi)$ and colatitude $\phi_s \in [0, \pi]$. Figure \ref{fig:bivariate_at_s1} demonstrates the empirical joint distribution of $\theta_s$ and $\phi_s$ for two different values of $s$. 
\begin{figure}
    \centering
    \includegraphics[width=75mm]{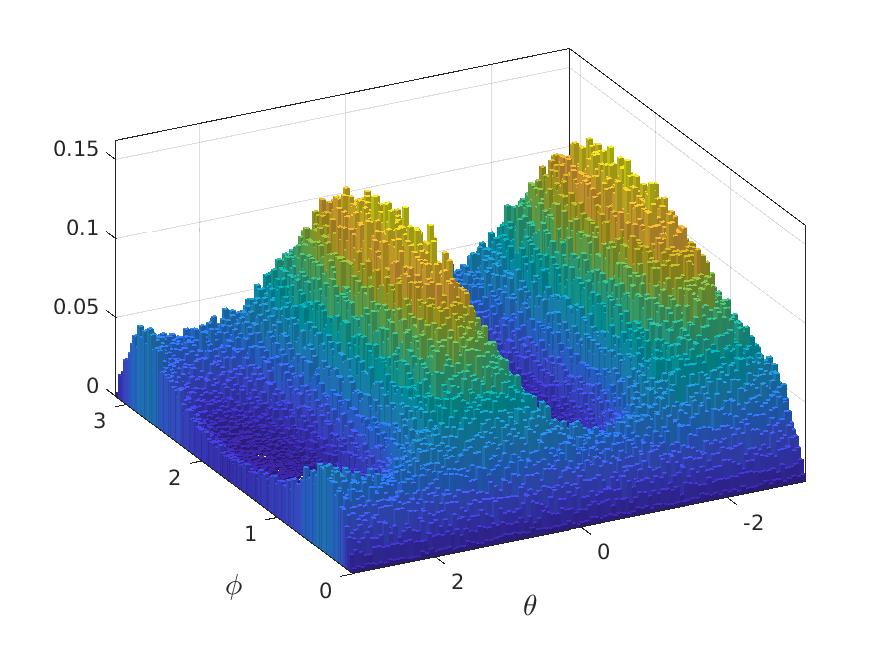}
    \includegraphics[width=75mm]{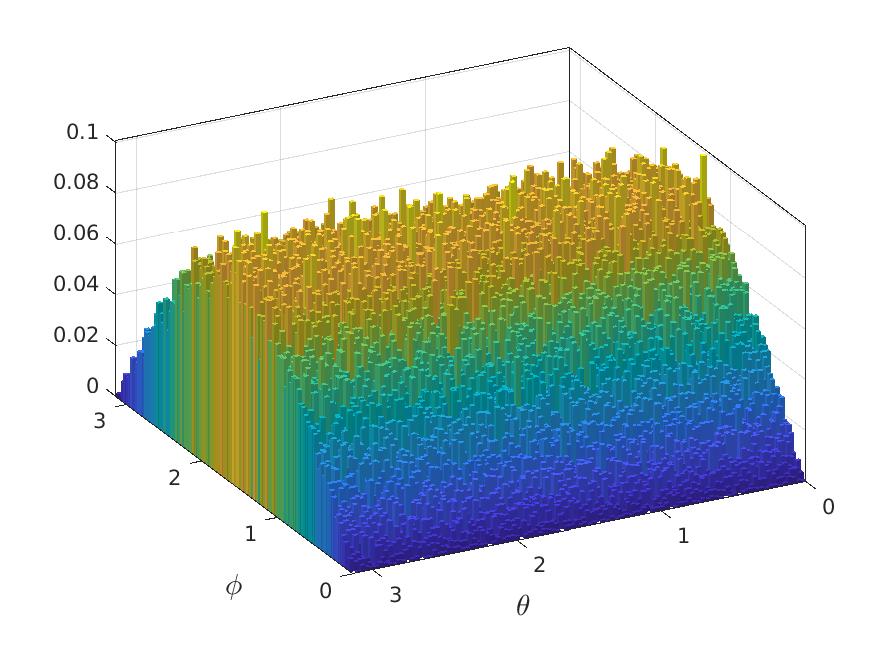}
    \caption{\textbf{Left:} A histogram of $(\theta_s, \phi_s)$ over one hemisphere at $s = 1$. \textbf{Right:} A histogram of $(\theta_s, \phi_s)$ over the sphere at $s = 10$.}
    \label{fig:bivariate_at_s1}
\end{figure}
As can be observed, the distribution of $(\theta_s, \phi_s)$ varies with $s$. We hypothesise that the angles $\theta_s$ and $\phi_s$ evolve to become statistically independent, and that $y_s$ will  eventually be uniformly distributed on the sphere.  In the remainder of the section, we test this hypothesis statistically.
\vskip.1in

\paragraph{\textbf{Wasserstein distance between measures on $\mathbb S^2$.}} We start by calculating the Wasserstein distance $W_1(\nu_1 ,\nu_2 )$ between probability measures $\nu_1$ and $\nu_2 $ on ${\mathbb S}^2$, which are absolutely continuous with respect to area and have disintegrations
\begin{align*} d\nu_j &=f_j(\theta) g_j(\phi\mid \theta ) \sin\phi \, d\phi d\theta\qquad (\theta\in [-\pi,\pi], \phi\in [0, \pi ], j=1,2)\end{align*}
where $f_j$ $(j=1,2)$ are probability density functions on $[-\pi , \pi ]$ that give the marginal distributions of $\nu_j$ in the longitude $\theta$ variable, and $g_j$ in the colatitude variable. Let $F_j$ be the cumulative distribution function of $f_j(\theta )d\theta$ and $G_j$ be the cumulative distribution function of $g_j(\phi )\sin\phi\,d\phi$. We measure $W_1(\nu_1 ,\nu_2 )$ in terms of one-dimensional distributions. 
Given distributions on ${\mathbb R}$ with cumulative distribution functions $F_1$ and $F_2$, we write $W_1(F_1,F_2)$ for the Wasserstein distance between the distributions for cost function $\vert x-y\vert$. Let $\psi :[-\pi , \pi ]\rightarrow [-\pi, \pi ]$ be an increasing function that induces $f_2(\theta) d\theta$ from $f_1(\theta )d\theta$; then
$$W_1( \nu_1 , \nu_2 )\leq W_1\bigl(F_1, F_2)+\int_{-\pi}^{\pi} W_1\bigl(G_2(\cdot \mid \psi (\theta )),G_1(\cdot \mid \theta )\bigr)f_1(\theta )d\theta .$$
In particular, for $f_1(\theta) =1 /(2\pi )$ and $g_1(\phi )=1/2$, we have a product measure $\nu_1 (d\theta d\phi )= (4\pi)^{-1}\sin\phi d\phi d\theta$ giving normalized surface area on the sphere. Then $F_1(\theta )=(\theta+\pi)/(2\pi )$ and $F_2(\psi(\theta ))=(\theta+\pi)/(2\pi) $, so $\psi (2\pi (\tau -1/2) )$  for $\tau\in [0,1]$ gives the inverse function of $F_2$. We deduce that 
\begin{equation} W_1(F_1,F_2)=\int_{-\pi}^{\pi} \Bigl\vert {\frac{\theta+\pi}{2\pi}}-F_2(\theta )\Bigr\vert d\theta\end{equation}
and 
\begin{equation} W_1\bigl(G_2(\cdot \mid \psi (\theta )),G_1(\cdot \mid \theta )\bigr)=\int_0^\pi \Bigl\vert \int_0^\phi  \bigl(g_2(\phi'\mid \psi (\theta ))-(1/2)\bigr)\sin\phi'd\phi'\Bigr\vert d\phi\end{equation}
\vskip.05in
Hence the Wasserstein distance can be bounded in terms of the cumulative distribution functions by
\begin{align} W_1( \nu_1 , \nu_2 )&\leq W_1(F_2, F_1)+W_1(G_2,G_1)+\int_{-\pi}^{\pi}W_1(G_2(\cdot \mid \theta ),G_2)dF_1(\theta) \nonumber\\
&=\int_{-\pi}^{\pi}\Bigl\vert {\frac{\theta+\pi}{2\pi}}-F_2(\theta )\Bigr\vert d\theta
+\int_0^\pi \Bigl\vert G_2(\phi )-{\frac{1-\cos\phi }{2}}\Bigr\vert d\phi\nonumber\\
&\quad +\int_{-\pi}^{\pi}\int_0^\pi \bigl\vert G_2(\phi \mid\theta )-G_2(\phi )\bigr\vert dF_1(\theta )d\phi
\end{align}
where we have used the triangle inequality to obtain a more symmetrical expression involving the Wasserstein distances for the marginal distributions and the $G$ conditional distributions, namely the dependence of the colatitude distribution on longitude.
\begin{figure}
    \centering
    \includegraphics[width=75mm]{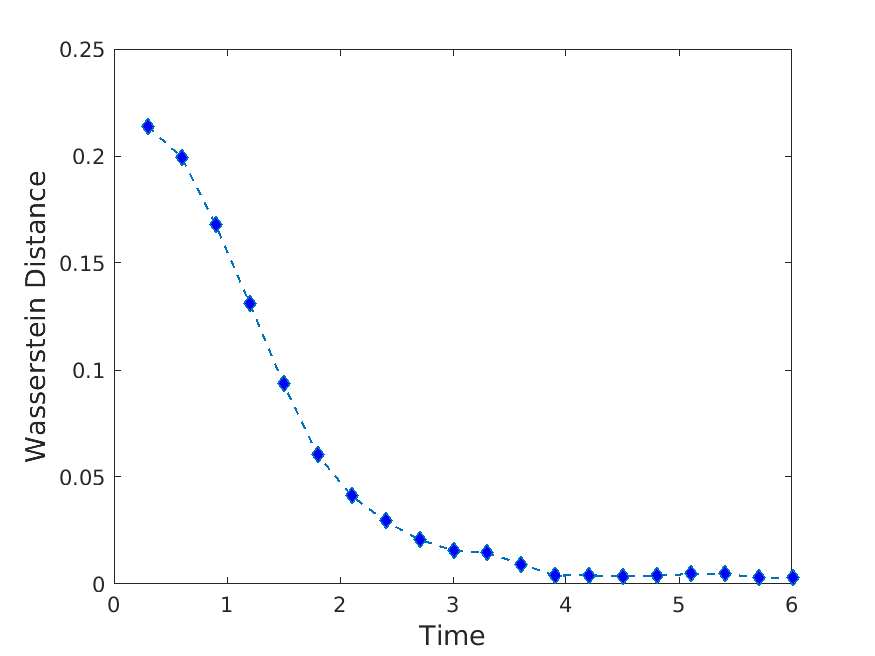}
    \includegraphics[width=75mm]{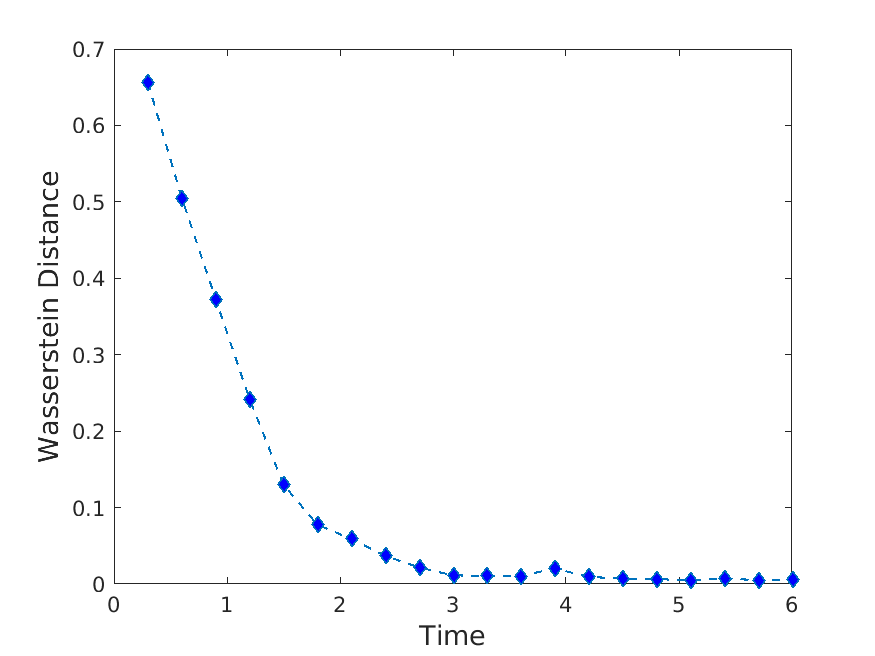}
    \caption{Wasserstein distance between $F_1(\theta )=(\theta+\pi)/(2\pi )$ and $F_{N}^{\theta_s}$ (\textbf{left}) and Wasserstein distance between $G_1(\phi)$ which has $g_1(\phi)=1/2$ as pdf, and $G_{N}^{\phi_s}$  
     (\textbf{right}) with $N = 10^5$ and $s$ ranging over $0.3, 0.6,0.9,\dots, 6.0$.}
    \label{fig:wass_dist}
\end{figure}

For each $s \in [0,10]$, let $F^{\theta_s}$ and $G^{\phi_s}$ be the marginal CDFs of $\theta_s$ and $\phi_s$ respectively. For $N \in \mathbb N$, denote by $F_{N}^{\theta_s}$ and $G_{N}^{\phi_s}$ the empirical CDFs of $\theta_s$ and $\phi_s$. 
We generate empirical CDFs $F_{N}^{\theta_s}$ and $G_{N}^{\phi_s}$ with $s=0.3, 0.6,0.9, \dots, 6.0$, and $N=10^5$. Figure~\ref{fig:wass_dist} demonstrates that $W_1(F_1, F_{N}^{\theta_s})$ and  $W_1(G_1, G_{N}^{\phi_s})$, each decreases with increasing $s$. As a consequence of Theorem~\ref{sanov}~and Proposition \ref{prop:fluctuation} for $N=10^5$ with probability at least $0.99$ it holds that $W_1(F_{N}^{\theta_s},F^{\theta_s}) \leq 0.025$ and $W_1(G_{N}^{\phi_s}, G^{\phi_s}) \leq 0.018$. Thus, we observe that $F^{\theta_s}$ and $G^{\phi_s}$ converge to $F_1$ and $G_1$ respectively. 
\vskip.1in

\paragraph{\textbf{Hypothesis tests for independence and goodness-of-fit.}} We run a total of $22$ hypothesis tests to examine the evolution of the joint distribution of the angles  $\theta_s$ and $\phi_s$.
In order to account for multiple testing, we set the significance level of each test to $0.00045$, leading to an overall level of $0.01$.
First, we generate sample paths to obtain $N =10^5$ realisations 
of $(\theta_{s},\phi_{s})$ for each value of $s = 0.3, 0.6, 0.9,\dots, 6.0$. For each $s$ we test the null hypothesis $H_{0,s}$ that the angles $\theta_s$ and $\phi_s$ are statistically independent, against the alternative hypothesis $H_{1,s}$ that they are dependent. To this end, we rely on a widely used nonparametric independence test, which is based on the Hilbert-Schmidt Independence Criterion (HSIC) dependence measure \cite{GG, adaptive}; the implementation is due to \cite{JZG}. 
It is observed that while the null hypothesis is rejected for $s=0.3, \dots, 2.1$, the test is unable to reject $H_{0,s}$ from $s=2.4,\dots,6.0$ at (an overall) significance level $0.01$. For $s=10$, we run two Kolmogorov-Smirnov goodness-of-tests as follows. The first one tests the null hypothesis $H_{0}^{\theta_s}$ that $\theta_s,~s=10$ is distributed according to $F_{1}$ against the alternative that it is not; the second one tests the null hypothesis $H_{0}^{\phi_s}$ that $\phi_s,~s=10$ is distributed according to $G_{1}$ against the alternative that it is not. 
At significance level $0.01$, the test is unable to reject the null hypotheses $H_{0}^{\theta_s}$ and $H_{0}^{\phi_s}$ for $s=10$.

\section{Acknowledgements} The authors thank Nadia Mazza for helpful remarks on combinatorics. MKS is funded by a Faculty of Science and Technology studentship, Lancaster University.


\begin{thebibliography}{99}
\bibitem[AGT]{AGT} R. Adami, F. Golse and A. Teta, Rigorous derivation of cubic NLS in dimension one, {\sl J. Stat. Phys.} {\bf 127} (2007), 1193-1220.\par
\bibitem[AKV]{AKV}Y. Angelopoulos. R. Killip and M. Visan, Invariant measures for integrable spin chains and an integrable discrete Sch\"odinger equation, {\sl SIAM J. Math. Anal.} {\bf 51} (2020), 135-163.\par
\bibitem[BG]{BG} G. Biau and L. Gy\"orfi, On the asymptotic properties on a non-parametric $L^1$ test statistic of homogeneity, {\sl IEEE Transaction on information theory} {\bf 51} (2005), 3965-3973.\par 
\bibitem[B0]{B0} G. Blower, The Gaussian isoperimetric inequality and transportation, {\sl Positivity} {\bf 7} (2003), 203-224.\par
\bibitem[B]{B}G. Blower, A logarithmic Sobolev inequality for the invariant measure of the periodic Korteweg-de Vries equation, {\sl Stochastics} {\bf 84} (2012), 533-542.\par
\bibitem[B2]{B2}G. Blower, Concentration of the invariant measure of the periodic Zakharov, KdV, NLS and Gross Piatevskii equations in 1D and 2D, {\sl J. Math. Anal. Appl.} {\bf 438} (2016), 240-266.\par
\bibitem[BB]{BB} G. Blower and F. Bolley, Concentration of measure on product spaces with applications to Markov processes, {\sl Studia Math.} {\bf 175} (2006), 47-72.\par 
\bibitem[BBD]{BBD}G. Blower, C. Brett and I. Doust, Logarithmic Sobolev inequalities and spectral concentration for the cubic Schr\"odinger equation {\sl Stochastics} {\bf 86}, (2014), 870-881.\par
\bibitem[Bl]{Bl} F. Bolley, Application du transport optimal \`a des probl\`emes de limites de champ moyen, Th\`ese ENS Lyon (2005).\par
\bibitem[BGV]{BGV} F. Bolley, A. Guillin and C. Villani, Quantitative concentration inequalities for empirical measures on noncompact spaces, {\sl Probab. Theory Related Fields}, {\bf 137} (2007), 541-593.\par
\bibitem[Bo]{Bo} J. Bourgain, Periodic nonlinear Schr\"odinger equation and invariant measures {\sl Commun. Math. Phys.} {\bf 166} (1994), 1-26.\par
\bibitem[B02]{Bo2} J. Bourgain, Global solutions of Nonlinear Schr\"odinger equations, (American Mathematical Society, 1999).\par

\bibitem[CM]{CM} R.H. Cameron and W.T. Martin, Transformations of Wiener integrals under translations, {\sl Ann. Math.} {\bf 45} (1944), 386-396.\par
\bibitem[CEMS]{CEMS} D. Cordero-Erausquin, R.J. McCann and M. Schmuckensl\"ager, Pr\'ekopa-Leindler inequalities on Riemannian manifolds, Jacobi fields and optimal transport, {\sl Annales de la Fac. Sci. Toulouse Math.} {\bf 15} (2006), 613-635.\par
\bibitem[DGL]{DGL}
L. Devroye, L. Gy{\"o}rfi, and G. Lugosi.
\newblock {\em A probabilistic theory of pattern recognition}, volume~31.
\newblock Springer Science \& Business Media, 2013.\par
\bibitem[D]{D} Q Ding, A note on the NLS and the Schr\"odinger flow of maps,  {\sl Phys. Lett A} {\bf 248} (1998), 49-56.\par

\bibitem[DL]{DL} B.K. Driver and T. Lohrenz, Logarithmic Sobolev inequalities for pinned loop groups, {\sl. J. Funct. Anal.} {\bf 140} (1996), 381-448.\par
\bibitem[DS]{DS} J.-D. Deuschel and D.W Stroock, {\sl Large Deviations}, (Academic Press, 1989).\par 
\bibitem[GR]{GR} I.S. Gradsteyn and I.M. Ryzhik, {\sl Table of Integrals, Series and Products}, (Academic Press, 2015). \par
\bibitem[GG]{GG} A. Gretton and L. Gy\"orfi, {\sl Consistent Nonparametric Tests of Independence}. Journal of Machine Learning Research, {\bf 11} (2010), 1391-1423.

\bibitem[H]{H} H. Hasimoto, A soliton on a vortex filament, {\sl J. Fluid Mech.} {\bf 51} (1972), 477-485.\par

\bibitem[IW]{IW} N. Ikeda and S. Watanabe, An introduction to Malliavin's calculus, pp 1-52 in K. Ito, (edr) {\sl Stochastic Analysis}, (North-Holland, 1984).\par
\bibitem[JZG]{JZG} W. Jitkrittum, Z. Szab\'o, and A. Gretton, An Adaptive Test of Independence with Analytic Kernel Embeddings. {\sl Proceedings of the 34th International Conference on Machine Learning}, {\bf 70} (2017), 1742-1751.
\bibitem[K]{K} D.E. Knuth, Permutations, matrices and generalized Young tableaux, {\sl Pacific J. Math.} {\bf 34} (1970), 709-727.\par

\bibitem[LRS]{LRS} J.L. Lebowitz, H.A. Rose and E.R. Speer, Statistical mechanics of the nonlinear Schr\"odinger equation, {\sl J. Statist. Phys.} {\bf 50} (1988), 657-687.\par
\bibitem[LNR1]{LNR1} N. Lewin, P.T. Nam and N. Rougerie, Derivation of nonlinear Gibbs measures from many-body quantum mechanics, {\sl J. \'Ecole. Polytech. Math.} {\bf 2} (2015), 65-115.\par
\bibitem[LNR2]{LNR2} N. Lewin, P.T. Nam and N. Rougerie, Classical field theory limit of many body quantum Gibbs states in 2D and 3D, {\sl Inventiones Math.} {\bf 224} (2021), 315-344.\par

\bibitem[MS]{MS} G. Marjanovic and V. Solo, Numerical Methods for Stochastic Differential Equations in Matrix Lie Groups Made Simple, {\sl IEEE Transactions on Automatic Control, vol.}{\bf 63} (2018), no. 12, 4035-4050.\par

\bibitem[McC]{McC} R. J. McCann, Polar factorization of maps on Riemannian manifolds, {\sl Geom. Funct. Anal.} {\bf 11} (2001), 589-608.\par
\bibitem[M]{M} H.P. McKean, Statistical mechanics of the nonlinear wave equation (4): Cubic Schr\"odinger, {\sl Comm. Math. Phys.} {\bf 168} (1995), 479-491.\par
\bibitem[MV1]{MV1} H.P. McKean and K.L. Vaninsky, Action-angle variables for the cubic Schr\"odinger equation, {\sl Comm. Pure Appl. Math.} {\bf 50} (1997), 489-562.\par
\bibitem[MV]{MV}H.P. McKean and K.L. Vaninsky, Brownian motion with restoring drift: the petit and micro-canonical ensembles, {\sl Comm. Math. Phys.} {\bf 160} (1994), 615-630.\par

\bibitem[ALMM]{adaptive} M. Albert, B. Laurent, A. Marrel, A. Meynaoui, Adaptive test of independence based on HSIC measures, {\sl The Annals of Statistics} , {\bf 50(2)} (2022), 858-879.

\bibitem[PS]{PS} M. J. Piggott and V. Solo, Stochastic numerical analysis for Brownian motion on SO(3), in {\sl Proceedings of 53rd IEEE Conference on Decision and Control}, (2014) 3420-3425

\bibitem[S]{S} K.-Th. Sturm, On the geometry of metric measure spaces. I. {\sl Acta Math.} {\bf 196} (2006), 133-177.\par
\bibitem[V1]{V1} C. Villani, {\sl Topics in Optimal Transportation}, (American Mathematical Society, 2003).\par
\bibitem[V2]{V2} C. Villani, {\sl Optimal Transport: Old and New}, (Springer, 2009).\par

\bibitem[X]{X} P. Xu, Noncommutative Poisson algebras, {\sl Amer. J. Math.} {\bf 116} (1994), 101-125.\par



\end{thebibliography}
\end{document}